\def\[#1\]{\begin{align*}#1\end{align*}}
\def\be#1\ee{\begin{align}#1\end{align}}
\def\bea#1\eea{\begin{align}#1\end{align}}
\def\ben#1\een{\begin{align*}#1\end{align*}}
\newcommand{\n}[1]{\left\Vert #1\right\Vert}
\newcommand{\la}{\left\langle}
\newcommand{\ra}{\right\rangle}
\newcommand{\R}{\mathbb{R}}
\newcommand{\N}{\mathbb{N}}
\newcommand{\mc}[1]{\mathcal{#1}}
\newcommand{\s}{\mc}
\newcommand{\p}[2]{\frac{\partial #1}{\partial #2}}
\def\ol#1{\overline{#1}}
\newcommand{\qmatrix}[1]{ \left( \begin{matrix} #1 \end{matrix} \right) }
\def\[#1\]{\begin{align*}#1\end{align*}}
\def\be#1\ee{\begin{align}#1\end{align}}
\def\bea#1\eea{\begin{align}#1\end{align}}
\def\ben#1\een{\begin{align*}#1\end{align*}}
\newtheoremstyle{theorem}{0.5cm}{0.5cm}%
   {}
   {}
   {\bfseries}
   {}
   {2ex}
   {\thmname{#1}\thmnumber{ #2}\thmnote{ #3}}
\theoremstyle{theorem}
\newtheorem{theorem}{Theorem}[section]
\newtheorem{example}[theorem]{Example}
\newtheorem{corollary}[theorem]{Corollary}
\newtheorem{remark}[theorem]{Remark}
\newtheorem{lemma}[theorem]{Lemma}
\begin{document}

\title[On  regularity of solutions for  Boltzmann transport equations]
{On regularity of solutions for certain linear Boltzmann transport equations }

\author{Jouko Tervo}
\address{University of Eastern Finland, Department of Applied Physics, P.O.Box 1627, 70211 Kuopio, Finland}
\email{040tervo@gmail.com}
 
\date{\today}

\maketitle

\begin{abstract}
The paper considers a class of linear Boltzmann transport equations 
which models a charged particle transport.
The equation is an approximation of the  original exact transport equation which
involves hyper-singular integrals in their collision terms. 
Hyper-singular integrals can be approximated by partial differential operators together with partial integral operators which leads to an approximation under consideration. 
This type of approximation is applied, for example in the dose calculation of radiation therapy. 
The related transport problem is a characteristic initial inflow boundary value problem.
Regularity results of solutions are verified utilizing the scales of relevant anisotropic  Sobolev spaces.
\end{abstract}

{\small \textbf{Keywords: } linear Boltzmann transport equation,  continuous slowing down approximation,\\
\indent regularity of solutions, characteristic boundary value problems, charged particle transport}

{\small \textbf {AMS-Classification:}  35Q20, 35R09, 35L04, 45K05

\section{Introduction}\label{intro}

We consider the existence and regularity of solutions of the {\it linear  Boltzmann transport continuous slowing down equation} 
\be\label{i1}
T\psi:=a(x,E){\p \psi{E}}
+c(x,E)\Delta_S\psi
+d(x,\omega,E)\cdot\nabla_S\psi
+\omega\cdot\nabla_x\psi+\Sigma(x,\omega,E)\psi-K_{r}\psi=f
\ee
in $G\times S\times I$. 
Here $G\subset\R^3$ is a spatial domain,  $S:=S_2\subset\R^3$ is the unit sphere (velocity direction domain) and $I=[E_0,E_{\rm m}]$ is the energy interval.
The solution satisfies the inflow  boundary condition
\be\label{i2}
{\psi}_{|\Gamma_-}=g
\ee
where $\Gamma_-$ is "the inflow boundary" $\{(y,\omega,E)\in (\partial G)\times S\times I|\ \omega\cdot\nu(y)<0\}$ (see section \ref{prem} below).
In addition, the solution obeys the initial condition
\be\label{i1-i}
\psi(.,.,E_{\rm m})=0
\ee
where $E_{\rm m}$ is the co-called {\it cut-off energy}.
The conditions (\ref{i2}), (\ref{i1-i}) guarantee that the overall initial inflow boundary value problem (\ref{i1}), (\ref{i2}), (\ref{i1-i}) is well-posed.
The \emph{restricted collision operator} $K_r$ is a { partial integral type operator} (see section \ref{res-coll} below) of the form
\bea\label{i2-k}
&
K_r\psi=
\int_{S'}\int_{I'}\sigma^1(x,\omega',\omega,E',E)\psi(x,\omega',E') dE' d\omega'
+
\int_{ S'}\sigma^2(x,\omega',\omega,E)\psi(x,\omega',E) d\omega'
\nonumber\\
&
+
\int_{I'}\int_{0}^{2\pi}
\sigma^3(x,E',E)
\psi(x,\gamma(E',E,\omega)(s),E')ds dE'.
\eea
The function $f=f(x,\omega,E)$ in (\ref{i1}) represents the { internal source}.
and $g=g(y,\omega,E)$ in \eqref{i2}  represents the { boundary source}.
The solution $\psi$ of the problem \eqref{i1}, \eqref{i1}, \eqref{i1-i} 
describes the fluence of the considered particle.

In  \cite{tervoarxiv-18} (section 6), \cite{tervo19} we have given some reasons to use the equations like (\ref{i1}) for modelling charged particle transport, for example for the propagation of electrons and positrons in radiation therapy dose calculation.  
The starting  point is that differential cross sections for charged particles may contain hyper-singularities with respect to energy variable and hence the corresponding exact (original) collision operator  is  a partial {\it hyper-singular integral operator}. This singular integral operator can be reasonably approximated which leads to an equation of the form (\ref{i1}).

In this paper 
we restrict ourselves to the case where the spatial  dimension $n=3$ ($G\subset\R^3$) and the Lebesgue index $p=2$. The assumption $n=3$ is not essential.
The regularity results are formulated utilizing certain {anisotropic Sobolev-Slobodevskij spaces} $H^{(s_1,s_2,s_3)}(G\times S\times I^\circ)$. 
In section \ref{fs} we  introduce these spaces for integer indexes $(m_1,m_2,m_3)$  and bring up  some of their properties. 
These spaces are relevant in formulations of regularity results for transport equations since they enable different degree of regularity for variables $x,\ \omega,\ E$.

Literature contains numeral contributions concerning for existence and regularity analysis of   {partial differential} initial boundary value problems for hyperbolic and formally dissipative equations
beginning from \cite{lax},  \cite{friedrich58}.  More recent results can be found e.g. in  \cite{rauch74}, \cite{rauch85}, \cite[p. 47]{rauch12}, \cite[p. 134]{pazy83}, \cite[Chapter XXIII]{hormander85}, \cite{nishitani96}, \cite{nishitani98}, \cite{morando}, \cite{takayama02}.
The well-known challenge in the initial boundary  value theory are  
\emph{characteristic} problems for which the so called boundary matrix (section \ref{rbp}) has a non zero kernel. Especially the problems where the boundary matrix has a 
\emph{variable multiplicity} (the rank of the boundary matrix is not constant) are challenging. 
We remark that characteristic boundary property depends only on the equation and the boundary but it does not depend explicitly on the boundary condition.
The transport problems considered here are of variable multiplicity. The other challenge in the present case is the inclusion of the restricted collision operator $K_r$ which is a partial integral operator.

Some specific results concerning for existence and regularity of solutions of transport problems like (\ref{i1})-(\ref{i1-i}) can  be found in literature as well.  In the case where $a=c=d=0$
existence of solutions for transport problems   like (\ref{i1}), (\ref{i2})
has been studied for  single equations e.g. in \cite{egger14}, \cite{dautraylionsv6}, \cite{agoshkov} and
for  coupled systems  in  \cite{tervo17-up}. 
In the above references it is assumed that the restricted collision operator $K_r$  satisfies a (partial) \emph{ Schur criterion } for boundedness (\cite{halmos}, p. 22).
In \cite{tervo17}, \cite{tervo18}, \cite{frank10}, \cite{frank-goudon} one has given
existence results of solutions for the case where  only $c=d=0$.
In \cite{agoshkov},
(Chapter 4)  some regularity results of solutions are exposed for the case where $a=c=d=0$. Therein a single mono-kinetic BTE is considered and one of the main results (from our perspective) is given in Theorem 4.14 where sufficient conditions under which the solution own a regularity of fractional order $\alpha$ with respect to spatial variable is obtained.  Proofs are based on the use of (fractional) differences.  
The equation (\ref{i1}) is closely related to time-dependent Fokker-Planck equations (note that  the time-dependent  equation  is of the form (\ref{i1}) when we replace time $t$ with energy $E$).
Existence and regularity  results for (deterministic and linear) Fokker-Planck type equations can be found
e.g. in \cite[Appendix A]{degond}, \cite{tian}, \cite{chupin},  \cite{le-bris}, \cite{chen}.  
In \cite{bouchut} one has shown that a time-dependent transport problem 
satisfies  
a kind of "gain in $x$-regularity with the help of $\omega$-regularity". 
In \cite{herty}, \cite{jorres}  existence results are obtained in the context of dose calculation  for optimal radiation treatment planning.  
Some $W^{p,1}$-regularity results for a stationary mono-kinetic problem is obtained in \cite[section 3]{kawagoe}.
In \cite[especially Theorem 5.3]{alonso}  (see also its references)  one has considered regularity results for the mono-kinetic transport equation as well.

Certain  collision operators obey the so called smoothing property (\cite{golse}, \cite{mokhtarkharroubi91}, \cite{zeghal})
In \cite{chen20} one has applied the  smoothing property   to retrieve regularity results in the case $a=c=d=0$. One has shown therein that under relevant assumptions the solutions of the problem
(\ref{i1}),  (\ref{i2}) obey Sobolev regularity with respect to spatial variable up to order $1-\epsilon$
where $\epsilon>0$. The spatial domain $G\subset\R^3$ is assumed to be a convex bounded set whose boundary $\partial G$ obeys certain geometric restrictions (uniform interior sphere condition). In \cite{guo17} one has obtained the first order ($W^{p,1}$ or weighted $W^{p,1}$) regularity results for a non-linear time-dependent BTE of the form (here $v$ is the velocity variable)
\be\label{non-lin}
{\p \psi{t}}+v\cdot\nabla_x\psi=Q(\psi,\psi)
\ee
under three alternative boundary conditions. The spatial domain $G$ is assumed to be a strictly convex and bounded set. In \cite{guo16} bounded mean variation regularity results are retrieved for the same kind of equation under diffuse boundary conditions.  The spatial domain therein is allowed to be non-convex. 
In \cite{alexandre} one has studied the non-linear equation like (\ref{non-lin})
in the global case $G=\R^3$. Besides of proving existence of non-negative solutions near an  equilibrium state (Maxwellian) one has obtained under certain assumptions weighted regularity of solutions with respect to all variables, more precisely one has shown that the solutions $\psi=\psi(t,x,v)$ belong to $L^\infty([0,\infty[,H_l^k(\R^6))$ where $H_l^k(\R^6):=\{f\in S'(\R^6)|\ (1+|v|^2)^{{l\over 2}}f \in H^k(\R^6)\}$. The corresponding linearized problem has been studied in section 4.2 therein. The results are valid for certain class of non-integrable (non-cutoff) cross-sections of  collision operators. Further researches in this direction have been exposed e.g. in the introduction of \cite{alexandre}.
In part of the above references (as in (\ref{non-lin})) the state space is $G\times V$ where $V$ is velocity space but they can be formulated for $G\times S\times I$ via the (local)
diffeomorphism 
$
h(\omega,E):=\sqrt{E}\omega.
$

In section \ref{ex-sol} we recall  existence and uniqueness results of solutions for the problem (\ref{i1}), (\ref{i2}), (\ref{i1-i})    
given in \cite{tervo19}. The existence results therein  founded on the {Lions-Lax-Milgram Theorem} and on relevant inflow trace theorems. In \cite{tervo18-up}, section 6.2 we applied same kind of technicalities in the case where $c=d=0$.
In \cite{tervo17}, \cite{tervo18} we considered related results based on the $m$-dissipativity of the smallest closed extension of the partial differential part of the transport operator and the methods therein offer an alternative approach for existence analysis.

Section \ref{rbp}  considers the regularity of solutions by applying the well-known explicit formulas of solutions. 
We proceed in the increasing order of complexity. For the first instance, in section \ref{reg-up-to}
we  deal with merely the convection-scattering equation 
\[
\omega\cdot\nabla_x\psi+\Sigma\psi=f.
\]
We focus on spatial regularity ($x$-regularity) but some outlines for regularity  with respect to all variables $x,\ \omega,\ E$ are exposed.
After that in section \ref{csda-ex} some regularity results  are given for the equation of the form
\[
-{\p {(a\psi)}E}+\omega\cdot\nabla_x\psi+
\Sigma\psi
= f.
\]

In section \ref{w-x-reg} we study the regularity of solutions   for an equation  involving 
the integral operator  $K_r$. We apply therein  technicalities based on \emph{Neumann series}. The treatments confine to a special case of transport equations (\ref{i1}).

In section \ref{outlines} we depict some outlines for the so called tangential (co-normal) regularity arising from  general theory of the first order PDEs. We remark that full Sobolev regularity can not be expected for characteristic   problems. However, the partial (that is, tangential) regularity can be proved for problems with constant multiplicity (\cite{rauch85}, \cite{morando09}). For problems with variable multiplicity (such as the present problems) even the tangential regularity need not be valid.   

We remark that in the global case $G=\R^3$ 
the "regularity of solution may increase according to the data".  
Actually, we conjectured in \cite{tervo19-b} (Discussion and Remark 4.9 therein) that in the case $G=\R^3$ 
under relevant assumptions the assumption
\[
f\in \cap_{k=0}^{m_2}\cap_{j=0}^{m_3}H^{(m_1+k+j,m_2-k,m_3-j)}(\R^3\times S\times I^\circ)
\]
implies that $\psi\in H^{(m_1,m_2,m_3)}(G\times S\times I^\circ)$.
For $G\not=\R^3$ 
the corresponding  problem  is characteristic which  causes limited regularity of solutions.
In fact, due to the needed inflow boundary condition (\ref{i2})  the Sobolev regularity of solutions is depending strongly on the properties of the (non-smooth) escape time mapping $t(x,\omega)$ (section \ref{prem} below) {contrary to the global case $G=\R^3$}. This can be seen transparently from the explicit solution formulas applied in section \ref{rbp}.
The regularity of solutions is not necessarily "increasing along the data, at least with respect to $(x,\omega)$-variable".  
It is known that the regularity of the solution of the transport equations  
with respect to $x$-variable, for example is limited up to space $H^{(s_1,0,0)}(G\times S\times I^\circ),\ s_1<3/2$   regardless of the smoothness of the data 
(see counterexample given in \cite{tervo17-up}, section 7.1). We  remark, however that the  regularity with respect to $E$-variable up to any order may be obtained (see section \ref{rev} below).

\section{Preliminaries}\label{prem}

We assume that $G$ is an open bounded set in $\R^3$ (equipped with the Lebesgue measure) whose boundary $\partial G$  belongs to  $C^{1}$ 
(see e.g. \cite{grisvard}, section 1.2). Moreover, we assume that $G$ is \emph{convex} which simplifies some formulations. 
The unit outward pointing normal on $\partial G$  is denoted by $\nu$. The surface measure on $\partial G$ is $d\sigma$.
Let $S=S_2$ be the unit sphere in $\R^3$ equipped with the usual surface measure $d\omega$.
Furthermore, let $I$ be an interval $[E_0,E_{\rm m}]$  of $\R$  where
$E_0\geq 0, \ E_0<E_{\rm m}<\infty$.  The interior $]E_0,E_{\rm m}[$ is denoted by $I^\circ$.
We use abbreviations
\[
\Gamma:=\partial G\times S\times I,
 \
\Gamma':=\partial G\times S,
\]
\[
\Gamma_{-}:=\{(y,\omega,E)\in \partial G\times S\times I|\ \omega\cdot\nu(y)<0\},
\
\Gamma'_{-}:=\{(y,\omega)\in \partial G\times S|\ \omega\cdot\nu(y)<0\}
\]
\[
\Gamma_{+}:=\{(y,\omega,E)\in \partial G\times S\times I|\ \omega\cdot\nu(y)>0\},
\
\Gamma'_{+}:=\{(y,\omega)\in \partial G\times S|\ \omega\cdot\nu(y)>0\}.
\]

For  $(x,\omega)\in G\times  S$ the {\it escape time mapping $t(x,\omega)$ } is defined by
\be
t(x,\omega)&=\inf\{s>0|\ x-s\omega\not\in G\}\\ \nonumber
&=\sup\{t>0|\ x-s\omega\in G\ {\rm for\ all}\ 0< s <t\}.
\ee
Furthermore, for $(y,\omega)\in \Gamma'_-\cup\Gamma'_+$ the {\it escape-time mapping $\tau(y,\omega)$ from boundary to boundary } is defined by
\bea
&
\tau_-(y,\omega):=
\inf\{s>0|\ y+s\omega\not\in G\},\ (y,\omega)\in \Gamma'_-\\
&
\tau_+(y,\omega):=
\inf\{s>0|\ y-s\omega\not\in G\},\ (y,\omega)\in \Gamma'_+ .
\eea
We find that
\[
0\leq t(x,\omega)\leq d:={\rm diam}(G),\ 0\leq\tau_{\pm}(y,\omega)\leq d
\]
where ${\rm diam}(G)$ is the diameter of $G$.

\subsection{Basic function  spaces}\label{fs}

All function spaces below are real valued.
Define the  space $W^2(G\times S\times I)$  by
\bea\label{fseq1}
W^2(G\times S\times I)
=\{\psi\in L^2(G\times S\times I)\ |\  \omega\cdot\nabla_x \psi\in L^2(G\times S\times I) \}.
\eea
The space $W^2(G\times S\times I)$ is equipped  with the inner product
\be\label{fs4}
\la {\psi},v\ra_{W^2(G\times S\times I)}=\la {\psi},v\ra_{L^2(G\times S\times I)}+
\la\omega\cdot\nabla_x\psi,\omega\cdot\nabla_x v\ra_{L^2(G\times S\times I)}.
\ee
Then $W^2(G\times S\times I)$  is a Hilbert space.

Let $X$ be a Hilbert space.
The space $H^k(I,X),\ k\in \N_0$ denotes the standard $X$-valued Sobolev space that is,
\[
H^k(I,X)=\{\psi|\ {{\partial^l\psi}\over{\partial E^l}}\in X,\  l\leq k\}
\]
and $H^k(I,X)$ is equipped with the standard inner product.
Let for $k\in\N$
\[
C^k(\ol G\times S\times I):=\{\psi\in C^k(G\times S\times I^\circ)|\ \psi=f_{|G\times S\times I^\circ},\ f\in C_0^k(\R^3\times S\times\R)\}.
\] 
The space $C^1(\ol G\times S\times I)$
is a dense subspace of $W^2(G\times S\times I)$ and $H^k(I,L^2(G\times S))$.

The space of $L^2$-functions on $\Gamma_-$ with respect to the measure
$|\omega\cdot\nu|\ d\sigma d\omega dE$  is denoted by $T^2(\Gamma_-)$
that is, $T^2(\Gamma_-)=L^2(\Gamma_-,|\omega\cdot\nu|\ d\sigma d\omega dE)$.
$T^2(\Gamma_-)$   is a Hilbert space and its natural inner product is
\be\label{fs8}
\la g_1,g_2\ra_{T^2(\Gamma_-)}=\int_{\Gamma_-}g_1(y,\omega,E)g_2(y,\omega,E)|\omega\cdot\nu|\ d\sigma(y) d\omega dE.
\ee
In patches $\Gamma_-^i=\{(y,\omega,E)\in (\partial G)_i\times S\times I|\ |\omega\cdot\nu(y)<0\}$ the inner product (\ref{fs8}) is computed by
\[
&
\la g_1,g_2\ra_{T^2(\Gamma_-^i)}
\\
&
=\int_{S\times I}\int_{U_{i,-,\omega}}g_1(h_i(z),\omega,E)g_2(h_i(z),\omega,E)|\omega\cdot\nu(h_i(z)) \n{(\partial_{z_1}h_i\times\partial_{z_2}h_i)(z)} dz d\omega dE
\]
where $U_{i,-,\omega}:=\{z\in U_i|\ \omega\cdot\nu(h_i(z))<0\}$ and $h_i:= \varphi_i^{-1}$ 
when $(U_i,\varphi_i)$ is the corresponding chart of $(\partial G)_i$.

The spaces $T^2(\Gamma_+)$ and $T^2(\Gamma)$ and their inner products are similarly defined.
In addition, we define the (Hilbert) space
\[
T_{ \tau_{\pm}}^2(\Gamma_{\pm})=L^2(\Gamma_{\pm},\tau_{\pm}(.,.)|\omega\cdot\nu|\ d\sigma d\omega dE)
\]
where the canonical inner product
\[
\la g_1,g_2\ra_{T_{\tau_{\pm}}^2(\Gamma_\pm)}=\int_{\Gamma_\pm}g_1(y,\omega,E)g_2(y,\omega,E) \tau_{\pm}(y,\omega)|\omega\cdot\nu|\ d\sigma(y) d\omega dE
\]
is used.

Since $\tau_{\pm}(y,\omega)\leq {\rm diam}(G)=d<\infty$ we obtain
from \cite{dautraylionsv6}, p. 252, \cite{cessenat84} or \cite{tervo18-up}, Theorem 2.16 

\begin{theorem}\label{tth}
The inflow trace operators 
\[
\gamma_{\pm}:{ W}^2(G\times S\times I)\to T_{\tau_{\pm}}^2(\Gamma_{\pm}) \ {\rm defined\ by}\ \gamma_{\pm}(\psi):=\psi_{|\Gamma_{\pm}}
\] 
are (well-defined) bounded surjective operators and they have bounded right inverses $L_\pm:T^2_{\tau_\pm}(\Gamma_\pm)\to { W}^2(G\times S\times I)$ that is, $\gamma_\pm\circ L_\pm=I$. The operators $L_\pm$  are called {\it lifts}.
\end{theorem}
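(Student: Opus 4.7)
The plan is to prove the two parts (boundedness/well-definedness of $\gamma_\pm$, and existence of a bounded right inverse $L_\pm$) by working first on the dense subspace $C^1(\ol G\times S\times I)\subset W^2(G\times S\times I)$ and then extending by continuity. Both parts rest on the same change of variables: for each fixed $(\omega,E)$, the map $(y,s)\mapsto y+s\omega$ from $\{(y,s):(y,\omega)\in\Gamma'_-,\ 0<s<\tau_-(y,\omega)\}$ onto $G$ is a bijection with Jacobian $|\omega\cdot\nu(y)|$, and an analogous parametrization holds for $\Gamma_+$ with $s\mapsto y-s\omega$.

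I would treat $\gamma_-$ (the argument for $\gamma_+$ is symmetric). For $\psi\in C^1(\ol G\times S\times I)$ and $(y,\omega,E)\in\Gamma_-$, the fundamental theorem of calculus along the characteristic $s\mapsto y+s\omega$ gives
\[
\psi(y,\omega,E)=\psi(y+s\omega,\omega,E)-\int_0^s(\omega\cdot\nabla_x\psi)(y+r\omega,\omega,E)\,dr,
\]
so by Cauchy--Schwarz $|\psi(y,\omega,E)|^2\le 2|\psi(y+s\omega,\omega,E)|^2+2s\int_0^s|(\omega\cdot\nabla_x\psi)(y+r\omega,\omega,E)|^2dr$. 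Integrating $s$ over $(0,\tau_-(y,\omega))$, using $\tau_-\le d$, and then multiplying by $|\omega\cdot\nu(y)|$ and integrating over $\Gamma_-$, the change of variables above converts the right-hand side into $L^2(G\times S\times I)$ norms, yielding
\[
\n{\gamma_-(\psi)}_{T^2_{\tau_-}(\Gamma_-)}^2\le 2\n{\psi}_{L^2(G\times S\times I)}^2+2d^2\n{\omega\cdot\nabla_x\psi}_{L^2(G\times S\times I)}^2\le C\n{\psi}_{W^2(G\times S\times I)}^2.
\]
Since $C^1(\ol G\times S\times I)$ is dense in $W^2(G\times S\times I)$, this extends $\gamma_-$ by continuity to a bounded linear operator with values in $T^2_{\tau_-}(\Gamma_-)$.

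For the bounded right inverse, I would define $L_-$ explicitly by freezing data along backward characteristics:
\[
(L_-g)(x,\omega,E):=g(x-t(x,\omega)\omega,\omega,E),\qquad (x,\omega,E)\in G\times S\times I.
\]
By construction $L_-g$ is constant along each characteristic line inside $G$, so (at least formally) $\omega\cdot\nabla_x L_-g=0$, and hence $L_-g\in W^2$ with $\n{\omega\cdot\nabla_x L_-g}_{L^2}=0$, while the change of variables gives
\[
\n{L_-g}_{L^2(G\times S\times I)}^2=\int_{\Gamma_-}\tau_-(y,\omega)|g(y,\omega,E)|^2|\omega\cdot\nu(y)|\,d\sigma d\omega dE=\n{g}_{T^2_{\tau_-}(\Gamma_-)}^2,
\]
so $L_-:T^2_{\tau_-}(\Gamma_-)\to W^2(G\times S\times I)$ is bounded. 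The identity $\gamma_-\circ L_-=I$ follows for smooth compactly supported $g$ on $\Gamma_-$ directly from the definition, and then for general $g$ by density together with the boundedness of $\gamma_-$ and $L_-$. Surjectivity of $\gamma_-$ is then automatic.

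The main technical obstacle is the transport identity $\omega\cdot\nabla_x L_-g=0$ in the distributional sense for merely $L^2$-data $g$, because $t(x,\omega)$ is only Lipschitz (not smooth) and degenerates at the grazing set $\{\omega\cdot\nu=0\}$. I would handle this by approximation: pick $g_n\in C_c(\Gamma_-\setminus\{\omega\cdot\nu=0\})$ with $g_n\to g$ in $T^2_{\tau_-}(\Gamma_-)$; for such $g_n$ the function $L_-g_n$ is Lipschitz on $G\times S\times I$ and satisfies $\omega\cdot\nabla_x L_-g_n=0$ pointwise a.e.; passing to the limit in the distributional pairing with test functions and using convexity of $G$ (so that characteristics through any interior point hit $\partial G$ transversally once backward and once forward) closes the argument. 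The convexity and $C^1$-regularity of $\partial G$, together with $\tau_\pm\le d<\infty$, are precisely what make the weighted $T^2_{\tau_\pm}$-norm the natural image space; an unweighted trace $T^2(\Gamma_\pm)$ would not be surjective in general.
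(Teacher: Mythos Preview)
Your argument is correct and is essentially the standard one found in the references the paper cites (Dautray--Lions, Cessenat, and \cite{tervo18-up}): the paper does not give its own proof of this theorem but simply refers to those sources, so there is nothing to compare beyond noting that your change-of-variables identity is precisely the Fubini-type Lemma~\ref{pr:fubini} used throughout the paper, and your explicit lift $(L_-g)(x,\omega,E)=g(x-t(x,\omega)\omega,\omega,E)$ is exactly the $\lambda=0$ case of Remark~\ref{lift}, where the paper also records that this choice makes $L_-$ an isometry from $T^2_{\tau_-}(\Gamma_-)$ into $W^2(G\times S\times I)$. Your handling of the distributional identity $\omega\cdot\nabla_x(L_-g)=0$ by approximation with data supported away from the grazing set is the right way to close the argument; note only that $\Gamma_-$ is already defined with the strict inequality $\omega\cdot\nu<0$, so ``compactly supported in $\Gamma_-$'' automatically keeps you away from $\Gamma_0'$.
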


\begin{remark}\label{lift}
The lift $L_-$ can be given explicitly, for instance by
\[
(L_-g)(x,\omega,E)=e^{-\lambda t(x,\omega)}g(x-t(x,\omega)\omega,\omega,E)
\]
where $\lambda\geq 0$. For $\lambda=0$ the  lift $L_-$ is an isometrics. Note that the lift $L_-$ is not unique. Analogous observations are valid for $L_+$.
\end{remark}

Because for any compact $K\subset\Gamma_\pm$ we have $\tau_\pm(y,\omega)\geq c_K>0$ for all $(y,\omega,E)\in K$,  it follows from Theorem \ref{tth}  that
the trace operators $\gamma_\pm:{ W}^2(G\times S\times I)\to \ L^2_{\rm loc}(\Gamma_\pm,
|\omega\cdot\nu|\ d\sigma d\omega dE)$ are continuous.

The trace $\gamma(\psi),\ \psi\in { W}^2(G\times S\times I)$ is not necessarily in the space $T^2(\Gamma)$.  Hence we define the spaces
\be\label{fs12}
\widetilde{ W}^2(G\times S\times I)=\{\psi\in { W}^2(G\times S\times I)|\  \gamma(\psi)\in T^2(\Gamma) \}.
\ee
The space $\widetilde { W}^2(G\times S\times I)$ is equipped with the inner product
\be\label{fs14}
\la \psi,v\ra_{\widetilde { W}^2(G\times S\times I)}=\la \psi,v\ra_{{ W}^2(G\times S\times I)}+ \la \gamma(\psi),\gamma(v)\ra_{T^2(\Gamma)}.
\ee
Then $\widetilde { W}^2(G\times S\times I)$ is a Hilbert space (\cite{tervo17-up}, Proposition 2.5).
For convex domains $G$ 
the space $\widetilde { W}^2(G\times S\times I)$ is the completion of $C^1(\ol G\times S\times I)$ with respect to the norm (\ref{fs14}) that is, $C^1(\ol G\times S\times I)$ is dense in
$\widetilde { W}^2(G\times S\times I)$ (\cite{tervo18-up}, Corollary 2.22).

For $v\in \widetilde { W}^2( G\times S\times I)$ and $\psi\in\widetilde { W}^2(G\times S\times I)$ it holds the Green's formula 
\begin{align}\label{green}
\int_{G\times S\times I}(\omega\cdot \nabla_x \psi)v\ dxd\omega dE
+\int_{G\times S\times I}(\omega\cdot \nabla_x v)\psi\ dxd\omega dE=
\int_{\partial G\times S\times I}(\omega\cdot \nu) v\ \psi\ d\sigma d\omega dE
\end{align}
which is obtained by Stokes Theorem  for $v,\ \psi\in C^1(\ol G\times S\times  I)$ and then by the density arguments for general $v\in \widetilde { W}^2(G\times S\times I)$ and $\psi\in\widetilde { W}^2(G\times S\times I)$.

\subsection{Anisotropic Sobolev spaces}\label{an-spaces}

Let $m=(m_1,m_2,m_3)\in \N^3$ be a multi-index (for simplicity we restrict ourselves here to integer multi-indexes but the definitions below can be generalized for $s=(s_1,s_2,s_3)\in [0,\infty[^3$ in the standard way).
Define an {\it  anisotropic ( or mixed-norm)  Sobolev spaces} $H^m(G\times S\times I^\circ):=H^{m,2}(G\times S\times I^\circ)$ by 
\bea
&
H^{m}(G\times S\times I^\circ)
:=\Big\{\psi\in L^2(G\times S\times I)\ |\ \partial_x^\alpha\partial_{{\omega}}^\beta\partial_E^l\psi \in L^2(G\times S\times I),\nonumber\\
&
 {\rm for\ all}\ |\alpha|\leq m_1,\ |\beta|\leq m_2,\ l\leq m_3\Big\}.
\eea
Here we denote more shortly ${{\partial^\alpha}\over{\partial x^\alpha}}=\partial_x^\alpha$
and similarly $\partial_\omega^\beta=\partial_{\omega_1}^{\beta_1}\partial_{\omega_2}^{\beta_2}$
where $\{\partial_{\omega_j},\ j=1,2\}$ is a local  basis of the tangent space $T(S)$. 
We recall that for sufficiently smooth functions $f:S\to\R$ 
\[
{\p f{\omega_j}}_{\Big|\omega}={\partial\over{\partial w_j}}(f\circ h)_{\Big|u=h^{-1}(\omega)},\ j=1,2
\]
where  $h:W\to S\setminus S_0$ is a parametrization of  $S\setminus S_0$ where $S_0$ has the surface measure zero.
Moreover, recall that $f\in L^2(S)$  if and only if $f\circ h\in L^2(W,\n{\partial_1h\times\partial_2h}du)$. 
For $f\in L^2(S)$ we define the inner product
\be\label{sn}
\la f_1,f_2\ra_{L^2(S)}=\int_Sf_1 f_2 d\omega=\int_W(f_1\circ h)(f_2\circ h)\n{\partial_1h\times\partial_2h}du.
\ee
The space $H^{m}(G\times S\times I^\circ)$  is a Hilbert space when equipped with the  inner product
\be\label{hminner}
\la\psi,v\ra_{H^{m}(G\times S\times I^\circ)}
:=\sum_{|\alpha|\leq m_1,|\beta|\leq m_2,
l\leq m_3}
\la\partial_x^\alpha\partial_{\omega}^\beta\partial_E^l\psi,
\partial_x^\alpha\partial_{\omega}^\beta\partial_E^l v\ra_{L^2(G\times S\times I)}.
\ee
The corresponding norm is
\[
\n{\psi}_{H^{m}(G\times S\times I^\circ)}=\Big(\sum_{|\alpha|\leq m_1}\sum_{|\beta|\leq m_2}\sum_{l\leq m_3}
\n{\partial_x^\alpha\partial_{\omega}^\beta\partial_E^l\psi }_{L^2(G\times S\times I)}^2\Big)^{{1\over 2}}.
\]
In $L^2(\partial G)$ we  define an inner product analogously to (\ref{sn})
that is, 
\[
\la f_1,f_2\ra_{L^2(\partial G)}=\int_{\partial G}f_1 f_2 d\sigma.
\]
The definition of spaces $H^{m}((\partial G)\times S\times I^\circ)$ and their
inner products are analogous to that of $H^{m}(G\times S\times I^\circ)$.

Note that for $m'\geq m$ (this means that $m_j'\geq m_j$ for $j=1,2,3$)
\[
H^{m'}(G\times S\times I^\circ)\subset H^{m}(G\times S\times I^\circ)
\]
and that (since $G\times S\times I^\circ$ is bounded)
\[
W^{\infty,m}(G\times S\times I^\circ)\subset
H^{m}(G\times S\times I^\circ).
\]
where for $m\in\N_0^3$
\bea
&
W^{\infty,m}(G\times S\times I^\circ)
:=\Big\{\psi\in L^\infty(G\times S\times I)\big|\ 
\n{\partial_x^\alpha\partial_{\omega}^\beta\partial_E^l\psi}_{L^\infty(G\times S\times I)}<\infty \\
&
{\rm for }\ 
|\alpha|\leq m_1,\ |\beta|\leq m_2,\  l\leq m_3\Big\}
\eea
equipped with the natural norm.

The tensor product $H^{m_1}(G)\otimes H^{m_2}(S)\otimes  H^{m_3}(I^\circ)$ 
is a dense subspace of $H^{m}(G\times S\times I^\circ)$ but generally the spaces $H^{m_1}(G)\otimes H^{m_2}(S)\otimes  H^{m_3}(I^\circ)$ 
and $H^{m}(G\times S\times I^\circ)$ are not equal
(principles of these kind of results can be found e.g. in
\cite{aubin}, Chapter 12). 
In the case where the boundary $\partial G\in C^{m_1}$
the space $C^{m_1}(\ol G\times S\times I)$
is dense in $H^{m}(G\times S\times I^\circ)$.

\begin{remark}\label{re-spaces}
For multi-indexes of the form $m=(m_1,0,m_3)$ the spaces $H^{m}(G\times S\times I^\circ)$ can be characterized by using 
the \emph{partial Fourier transforms} with respect to $(x,E)$-variable. 
The partial Fourier transform of $\psi\in L^1(\R^3\times S\times \R)$ is given by
\[ 
(\mc F_{(x,E)}\psi)(\xi,\omega,\eta):=\int_{\R^3}\int_{\R}\psi(x,\omega,E)
e^{-{\rm i}\la (\xi,\eta),(x,E)\ra} dE  dx,\ 
(\xi,\eta)\in \R^3\times\R,\ \omega\in S.
\]
In the case where $m_2\not=0$ the difficulties of  characterizations by Fourier transforms arise from the definition of Fourier transform on $S$. We omit all formulations for these approaches.

\end{remark}

\section{Existence of solutions}\label{ex-sol}

\subsection{Assumptions for the restricted collision operator}\label{res-coll}

We assume that the restricted collision operator is  the sum (for  some details see \cite{tervo18-up}, section 5)
\be\label{esols1} 
K_r=K_r^1+K_r^2+K_r^3.
\ee
Here $K_r^1$ is of the form 
\[
(K_r^1\psi)(x,\omega,E)=\int_{S'\times I'}\sigma^1(x,\omega',\omega,E',E)\psi(x,\omega',E')d\omega' dE',  
\]
where $\sigma^1:G\times S^2\times I^2\to\R$ is a non-negative measurable function such that 
\bea\label{ass5-a}
&\int_{S'\times I'}\sigma^1(x,\omega',\omega,E',E)d\omega' dE'\leq M_1,\nonumber\\
&\int_{S'\times I'}\sigma^1(x,\omega,\omega',E,E')d\omega' dE'\leq M_2,
\eea
for a.e. $(x,\omega,E)\in G\times S\times I$.

The operator
$K_r^2$ is of the form 
\[
(K_r^2\psi)(x,\omega,E)=\int_{ S'}\sigma^2(x,\omega',\omega,E)\psi(x,\omega',E) d\omega',  
\]
where $\sigma^2:G\times S^2\times I\to\R$ is a non-negative  measurable function  such that
\bea\label{ass7}
&\int_{S'}\sigma^2(x,\omega',\omega,E)d\omega'\leq M_1,\nonumber\\
&\int_{S'}\sigma^2(x,\omega,\omega',E) d\omega'\leq M_2,
\eea
for a.e. $(x,\omega,E)\in G\times S\times I$.

Finally, $K_r^3$ is of the form 
\[
(K_r^3\psi)(x,\omega,E)
=
\int_{I'}\int_{0}^{2\pi}
\sigma^3(x,E',E)
\psi(x,\gamma(E',E,\omega)(s),E')ds dE'
\]
where
$\gamma=\gamma(E',E,\omega):[0,2\pi]\to S$
is a parametrization of the curve (an example for the choice of $\gamma=\gamma(E',E,\omega)$ is given in \cite{tervo18-up}, section 3.2)
\[
\Gamma(E',E,\omega)=\{\omega'\in S\ |\ \omega'\cdot\omega-\mu(E',E)=0\}
\]
where $\mu(E',E):=\sqrt{{{E(E'+2)}\over{E'(E+2)}}}$.
Moreover,
${\sigma}^3:G\times I^2\to\R$ is a non-negative measurable function such that
\bea\label{ass-8}
&\int_{I'}{\sigma}^3(x,E',E)dE'\leq M_1, \nonumber\\
&\int_{I'}{\sigma}^3(x,E,E')dE'\leq M_2,
\eea
for a.e. $(x,E)\in G\times I$. 

The following result is shown  in \cite{tervo18-up}, 

\begin{theorem}\label{esol-th1}
The operators  $K_r^j,\ j=1,2,3$  are bounded operators
$L^2(G\times S\times I)\to L^2(G\times S\times I)$ and
\be\label{k-norm-a}
\n{K_r^1}\leq \sqrt{M_1M_2},
\ee
\be\label{k-norm-b}
\n{K_r^2}\leq \sqrt{M_1M_2},
\ee
\be\label{k-norm-3}
\n{K_r^3}\leq 2\pi\ \sqrt{M_1M_2}.
\ee
Hence the operator $K_r:L^2(G\times S\times I)\to L^2(G\times S\times I)$ is bounded.
\end{theorem}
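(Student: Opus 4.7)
The plan is to establish each of the three bounds (\ref{k-norm-a})--(\ref{k-norm-3}) via a Schur-type estimate (\cite{halmos}, p.~22), in which the two mass conditions (\ref{ass5-a}), (\ref{ass7}), (\ref{ass-8}) play the roles of the ``row'' and ``column'' sums of the integral kernel. Once the three individual operators are bounded, the boundedness of $K_r$ follows from (\ref{esols1}) and the triangle inequality.

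For $K_r^1$ I would write $\sigma^1\psi=\sqrt{\sigma^1}\cdot(\sqrt{\sigma^1}\,\psi)$ and apply Cauchy-Schwarz in $(\omega',E')$ to get a pointwise estimate
\[
|(K_r^1\psi)(x,\omega,E)|^2\leq M_1\int_{S'\times I'}\sigma^1(x,\omega',\omega,E',E)|\psi(x,\omega',E')|^2\,d\omega'dE',
\]
where the factor $M_1$ comes from the first bound in (\ref{ass5-a}). Integrating over $G\times S\times I$ and exchanging the order of integration by Fubini, the inner integral $\int_{S\times I}\sigma^1(x,\omega',\omega,E',E)\,d\omega\,dE$ is controlled by $M_2$ by the second bound in (\ref{ass5-a}) (after a relabeling of the integration variables), giving $\|K_r^1\psi\|^2\leq M_1M_2\|\psi\|^2$. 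The argument for $K_r^2$ is verbatim identical, with $E$ treated as a passive parameter and (\ref{ass7}) replacing (\ref{ass5-a}).

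For $K_r^3$ the same philosophy applies, but the curve integration on $S$ forces two extra ingredients. First, Cauchy-Schwarz is applied with the weight $\sigma^3(x,E',E)$ against the product measure $ds\,dE'$ on $[0,2\pi]\times I'$; the total mass $\int_{I'}\!\int_0^{2\pi}\!\sigma^3\,ds\,dE'\leq 2\pi M_1$ is the source of the factor $2\pi$ in (\ref{k-norm-3}). Second, after integrating the resulting pointwise bound over $G\times S\times I$ and rearranging by Fubini, one needs to evaluate $\int_S|\psi(x,\gamma(E',E,\omega)(s),E')|^2\,d\omega$ for fixed $(x,E',E,s)$. Since $\gamma(E',E,\omega)(s)$ parametrizes the circle $\{\omega'\in S:\omega\cdot\omega'=\mu(E',E)\}$, for each fixed $s\in[0,2\pi]$ the map $\omega\mapsto\gamma(E',E,\omega)(s)$ sends $\omega$ to a point at fixed angular distance $\arccos\mu(E',E)$; a Fubini-type identity on $S$ then delivers
\[
\int_S\int_0^{2\pi}|\psi(x,\gamma(E',E,\omega)(s),E')|^2\,ds\,d\omega=2\pi\int_S|\psi(x,\omega',E')|^2\,d\omega'.
\]
Combining this with the second bound of (\ref{ass-8}) to absorb the $E$-integration of $\sigma^3$ produces $\|K_r^3\psi\|^2\leq(2\pi)^2M_1M_2\|\psi\|^2$.

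The main technical obstacle is precisely this Fubini-type identity on $S$ used in the $K_r^3$ step: it relies on the specific choice of the parametrization $\gamma$ of the fiber circles $\Gamma(E',E,\omega)$, whose measure-preservation (up to the multiplicity $2\pi$) must be verified from the construction. Everything else reduces to routine applications of Cauchy-Schwarz and Fubini in mixed variables, after which the triangle inequality closes the argument.
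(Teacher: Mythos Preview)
The paper does not give its own proof here: it simply cites \cite{tervo18-up}, Theorem 5.13. Your Schur-test argument (split the kernel as $\sqrt{\sigma}\cdot\sqrt{\sigma}$, Cauchy--Schwarz in the integration variables, then Fubini and the adjoint mass bound) is exactly the standard mechanism behind such estimates, and is what one expects the cited proof to carry out; for $K_r^1$ and $K_r^2$ your outline is correct as stated.

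For $K_r^3$ you have also located the only nontrivial step, namely the sphere identity
\[
\int_S\int_0^{2\pi}\big|\psi\big(\gamma(E',E,\omega)(s)\big)\big|^2\,ds\,d\omega
=2\pi\int_S|\psi(\omega')|^2\,d\omega',
\]
and you rightly flag that it hinges on the particular parametrization $\gamma$ constructed in \cite{tervo18-up}, section 3.2. That is indeed where the work sits in the cited reference: one shows that the map $(\omega,s)\mapsto \gamma(E',E,\omega)(s)$ pushes $d\omega\,ds$ forward to $2\pi\,d\omega'$ (equivalently, by symmetry of the relation $\omega\cdot\omega'=\mu$, for each fixed $\omega'$ the fiber $\{(\omega,s):\gamma(E',E,\omega)(s)=\omega'\}$ carries total measure $2\pi$). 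So your proposal matches the intended argument; the only thing you have deferred is precisely what the paper also defers to \cite{tervo18-up}.
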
 

\begin{proof}
\cite{tervo18-up}, Theorem 5.13.
\end{proof}

In order to render the operator $\Sigma-K_r$ coercive (accretive),
we shall assume that 
\be\label{ass-for-s}
\Sigma\in L^\infty(G\times S\times I),\ \Sigma\geq 0\ {\rm a.e.}
\ee
and
\begin{multline}
\Sigma(x,\omega,E)-\int_{S'\times I'}\sigma^1(x,\omega,\omega',E,E') d\omega' dE'
\\ \label{ass8-aa}
-\int_{S'}\sigma^2(x,\omega,\omega',E)  d\omega'
-2\pi\int_{I'}{\sigma}^3(x,E,E')dE'
\geq c',
\end{multline}
and
\begin{multline}
\Sigma(x,\omega,E)-\int_{S'\times I'}\sigma^1(x,\omega',\omega,E',E) d\omega' dE'
\\ \label{ass9-a}
-\int_{S'}\sigma^2(x,\omega',\omega,E)  d\omega'
-2\pi\int_{I'}{\sigma}^3(x,E',E)dE'
\geq c',
\end{multline}
for a.e. $(x,\omega,E)\in G\times S\times I$.
In the sequel we assume that $c'>0$.

Next result addresses coercivity (accretivity) of $K_r-\Sigma$ under the above assumptions. The result  is proven  in \cite{tervo18-up}, 

\begin{theorem}\label{SK-dissip}
Suppose that the assumptions (\ref{ass-for-s}),  (\ref{ass5-a}), (\ref{ass7}),  (\ref{ass-8}), (\ref{ass8-aa}) and (\ref{ass9-a}) are valid. 
Then 
\be\label{K-coer}
\la (\Sigma-K_r)\psi,\psi\ra_{L^2(G\times S\times I)}\geq c'\n{\psi}^2_{L^2(G\times S\times I)}\quad \forall \psi\in L^2(G\times S\times I).
\ee
\end{theorem}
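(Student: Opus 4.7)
The plan is to rewrite
\[
\la(\Sigma-K_r)\psi,\psi\ra_{L^2(G\times S\times I)}
=\la\Sigma\psi,\psi\ra_{L^2(G\times S\times I)}-\sum_{j=1}^3\la K_r^j\psi,\psi\ra_{L^2(G\times S\times I)}
\]
and to bound each of the three cross terms pointwise in $(x,\omega,E)$ by expressions which are annihilated by one of the two assumed lower bounds (\ref{ass8-aa}), (\ref{ass9-a}) on $\Sigma$. The main mechanism is the inequality $2|ab|\leq a^2+b^2$ applied inside each bilinear form, which is the standard trick for turning a Schur-type $L^2$-norm bound into an accretivity bound.

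For $K_r^1$, I will write
\ben
|\la K_r^1\psi,\psi\ra|\leq\int_{G\times S\times I}\int_{S'\times I'}\sigma^1(x,\omega',\omega,E',E)\,|\psi(x,\omega',E')|\,|\psi(x,\omega,E)|\,d\omega' dE' d\omega dE dx,
\een
apply $2|ab|\leq a^2+b^2$ to the product $|\psi(x,\omega',E')|\,|\psi(x,\omega,E)|$, and then use Fubini to integrate the resulting two quadratic pieces first in the ``free'' pair of variables. After renaming the integration variables, this yields
\ben
|\la K_r^1\psi,\psi\ra|
\leq\tfrac{1}{2}\int_{G\times S\times I}|\psi(x,\omega,E)|^2\Big[\int_{S'\times I'}\sigma^1(x,\omega,\omega',E,E')d\omega'dE'+\int_{S'\times I'}\sigma^1(x,\omega',\omega,E',E)d\omega'dE'\Big]dxd\omega dE,
\een
i.e.\ the coefficient of $|\psi|^2$ is exactly the sum of the two kernel integrals that appear in (\ref{ass8-aa}) and (\ref{ass9-a}). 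The same computation, carried out verbatim for $K_r^2$, produces the corresponding bound with $\sigma^2(x,\omega,\omega',E)+\sigma^2(x,\omega',\omega,E)$ in the bracket.

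The one slightly subtle case is $K_r^3$, because the inner variable is $\gamma(E',E,\omega)(s)\in S$ rather than a free point. After applying $2|ab|\leq a^2+b^2$, the mixed term contributes $\frac{1}{2}\int\sigma^3(x,E',E)|\psi(x,\gamma(E',E,\omega)(s),E')|^2 ds dE' dE d\omega dx$; here I will invoke the change-of-variables/Fubini identity, already used in \cite{tervo18-up} to prove (\ref{k-norm-3}), namely that for the parametrization $\gamma$ of $\Gamma(E',E,\omega)$ one has $\int_S\int_0^{2\pi}f(\gamma(E',E,\omega)(s))\,ds\,d\omega=2\pi\int_S f(\omega')d\omega'$. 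This converts the mixed term into $\frac{1}{2}\int|\psi(x,\omega,E)|^2\cdot 2\pi\int_{I'}\sigma^3(x,E,E')dE'$ after renaming, while the other term produces $\frac{1}{2}\int|\psi(x,\omega,E)|^2\cdot 2\pi\int_{I'}\sigma^3(x,E',E)dE'$ directly.

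Summing the three bounds and pairing with $\la\Sigma\psi,\psi\ra$, the coefficient of $|\psi(x,\omega,E)|^2$ in $\Sigma-\frac{1}{2}[\,\cdots\,]$ equals
\ben
\Sigma(x,\omega,E)-\tfrac{1}{2}\Big(\text{the bracket in (\ref{ass8-aa})}\Big)-\tfrac{1}{2}\Big(\text{the bracket in (\ref{ass9-a})}\Big),
\een
which by averaging (\ref{ass8-aa}) and (\ref{ass9-a}) is $\geq c'$ a.e. Integrating yields (\ref{K-coer}). The only real obstacle is bookkeeping the circle-parametrization identity for $K_r^3$ correctly so that the weights produced match the two assumed one-sided integrals of $\sigma^3$; once that is in place, the rest is a clean Fubini/AM--GM calculation.
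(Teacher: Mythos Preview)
Your argument is correct and is precisely the standard Schur/AM--GM computation that underlies the cited result: the paper does not give its own proof here but simply refers to \cite[Theorem 5.14]{tervo18-up}, and your sketch reconstructs that argument faithfully, including the circle-parametrization identity needed for $K_r^3$. The final step---averaging (\ref{ass8-aa}) and (\ref{ass9-a}) to recognize the pointwise lower bound $c'$ on the coefficient of $|\psi|^2$---is exactly how the two one-sided hypotheses combine.
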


\begin{proof}
\cite{tervo18-up}, Theorem 5.14.
\end{proof}

\subsection{Existence of solutions for the initial inflow boundary value problem}\label{ex-i-i-b-p}

Consider the  problem
\be\label{trunc-ex-1}
T\psi=f,\ \psi_{|\Gamma_-}=g,\ \psi(.,.,E_m)=0
\ee
where $f\in L^2(G\times S\times I),\ g\in T^2(\Gamma_-)$ and where
$T$ is of the form 
\be\label{trunc-ex-2}
T\psi=
a(x,E){\p \psi{E}}
+ c(x,E)\Delta_S\psi
+d(x,\omega,E)\cdot\nabla_S\psi
+\omega\cdot\nabla_x\psi+\Sigma\psi-K_{r}\psi.
\ee
Denote
\[
b(x,\omega,E,\partial_\omega)\psi
:= c(x,E)\Delta_S\psi
+d(x,\omega,E)\cdot\nabla_S\psi.
\]
Above $d=(d_1,d_2)\sim d_1{\partial\over{\partial\omega_1}}+ d_2{\partial\over{\partial\omega_2}}$ and $d\cdot\nabla_S$ is a Riemannian inner product on $T(S)$.

Define in $C^1(\ol G\times I,C^2(S))$  inner products
\bea \label{inH0}
&
\la \psi,v\ra_{\mc H}:=\la \psi,v\ra_{L^2(G\times S\times I)}+
\la\gamma(\psi),\gamma(v)\ra_{T^2(\Gamma)}\nonumber\\
&
+\la\psi(.,.,E_0),v(.,.,E_0)\ra_{L^2(G\times S)}+\la\psi(.,.,E_m),v(.,.,E_m)\ra_{L^2(G\times S)}\nonumber\\
&
+
\la \psi,v\ra_{L^2(G\times I,H^1(S))}
\eea
and 
\be\label{inhatH0}
\la \psi,v\ra_{\widehat {\mc H}}
=\la \psi,v\ra_{\mc H}
+
\la\omega\cdot\nabla_x \psi,\omega\cdot\nabla_x v\ra_{L^2(G\times S\times I)}
+
\la {\p \psi{E}},{\p v{E}}\ra_{L^2(G\times S\times I)}.
\ee
Let $\mc H$ and $\widehat{\mc H} $ be the completions of
$C^1(\ol G\times I, C^2(S))$ with respect to inner products $\la .,.\ra_{\mc H}$ and $\la .,.\ra_{\mc {\widehat H}}$, respectively.

We assume for the coefficients:
\be\label{trunc-18}
a\in W^{\infty,1}(G\times I^\circ), c\in L^\infty(G\times I),
\ \ d_j\in L^\infty(G\times I,W^{\infty,1}(S))
\ee
\be\label{trunc-14}
-\Big({\p a{E}}(x,E)+({\rm div}_S\ d)(x,\omega,E)\Big)\geq q_1>0,\ {\rm a.e.},
\ee
\be\label{trunc-19}
-c(x,E)\geq q_2>0,  \ {\rm a.e.},
\ee
\be\label{trunc-20}
-a(x,E_0)\geq q_3>0,\ -a(x,E_m)\geq q_3>0,  \ {\rm a.e.}
\ee
\be\label{assfor-a}
|a(x,E)|\geq q_4>0\ {\rm a.e.}.
\ee

Let $P(x,\omega,E,D)$ be the differential part of $T$ that is,
\[
P(x,\omega,E,D)\psi:= 
a(x,E){\p \psi{E}}
+b(x,\omega,E,\partial_\omega)\psi
+ \omega\cdot\nabla_x\psi.
\]
With this notation, the equation \eqref{trunc-ex-1}  can be written as
\begin{align*}
P(x,\omega,E,D)\psi+\Sigma\psi - K_r\psi = f.
\end{align*}
The space
\begin{multline}
{\s H}_P(G\times S\times I^\circ):=\{\psi\in L^2(G\times S\times I)\ | \\
P(x,\omega,E,D)\psi\in L^2(G\times S\times I)\ {\rm in\ the \ weak\ sense}\}
\end{multline}
is a Hilbert space when equipped with the inner product 
\[
\la \psi,v\ra_{{\s H}_P(G\times S\times I^\circ)}=\la \psi,v\ra_{L^2(G\times S\times I)}+\la P(x,\omega,E,D)\psi,P(x,\omega,E,D)v\ra_{L^2(G\times S\times I)}.
\]

In \cite{tervo19} we showed the following existence result for the initial and inflow boundary value problem (\ref{i1}), (\ref{i2}), (\ref{i1-i}).

\begin{theorem}\label{cor-csdath3}
Suppose that the assumptions (\ref{ass-for-s}), (\ref{ass5-a}), (\ref{ass7}), (\ref{ass-8}), (\ref{ass8-aa}), (\ref{ass9-a}),
(\ref{trunc-18}), (\ref{trunc-14}), (\ref{trunc-19}), (\ref{trunc-20}), (\ref{assfor-a}) are valid.
Let ${ f}\in L^2(G\times S\times I)$ and $g\in T^2(\Gamma_-)$.
Then the  transport problem 
\bea\label{trunc-ex-3-a}
&
a(x,E){\p \psi{E}}
+c(x,E)\Delta_S\psi
+d(x,\omega,E)\cdot\nabla_S\psi
+\omega\cdot\nabla_x\psi+\Sigma\psi-K_{r}\psi=f\nonumber\\
&
\psi_{|\Gamma_-}=g,\ \psi(.,.,E_m)=0
\eea
has a unique solution $\psi\in \mc H\cap {\s H}_P(G\times S\times I^\circ)$.
In addition,   the solution $\psi$
obeys the a priori  estimate
\be\label{csda40aa}
\n{\psi}_{{\mc H}}\leq C
\big(\n{{ f}}_{L^2(G\times S\times I)}+\n{{g}}_{T^2(\Gamma_-)}\big).
\ee
\end{theorem}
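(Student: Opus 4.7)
The plan is to realize $\psi$ as a weak solution through the Lions variant of Lax--Milgram, as signaled in Section \ref{ex-sol}. Take the trial space to be $L^2(G\times S\times I)$ (ultimately upgraded to $\mc H\cap\mc H_P$ a posteriori) and the test space $\Phi$ to consist of sufficiently smooth functions satisfying the \emph{dual} homogeneous boundary conditions $v|_{\Gamma_+}=0$ and $v(.,.,E_0)=0$, equipped with a graph-type norm strong enough to make $T^*v\in L^2$. Multiplying $T\psi=f$ against $v\in\Phi$ and shifting all derivatives onto $v$ via Green's formula (\ref{green}), endpoint integration by parts in $E$, and integration by parts on the closed manifold $S$, one obtains the weak formulation
\[
B(\psi,v):=\la\psi,T^*v\ra_{L^2(G\times S\times I)}=\la f,v\ra_{L^2(G\times S\times I)}-\int_{\Gamma_-}|\omega\cdot\nu|\,g\,v\,d\sigma d\omega dE=:L(v),
\]
where the prescribed conditions $\psi(.,.,E_m)=0$ and $v|_{\Gamma_+}=0$ kill the corresponding boundary terms while the inflow datum $g$ is absorbed into the linear functional $L$.

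The decisive step is to verify weak coercivity of $B$ on $\Phi$. For $v\in\Phi$ a direct computation of $\la v,T^*v\ra$ yields: the $a\partial_E$ block contributes $-\tfrac12 a(.,E_m)v(.,.,E_m)^2-\tfrac12\int\partial_Ea\cdot v^2$; the $c\Delta_S$ block contributes $\int(-c)|\nabla_S v|^2$; the $d\cdot\nabla_S$ plus $\mathrm{div}_Sd$ blocks combine into $-\tfrac12\int\mathrm{div}_Sd\cdot v^2$; the transport block contributes $\tfrac12\|v\|^2_{T^2(\Gamma_-)}$ since the $\Gamma_+$-trace vanishes; and $\la(\Sigma-K_r^*)v,v\ra=\la(\Sigma-K_r)v,v\ra\geq c'\|v\|^2$ by Theorem \ref{SK-dissip}. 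Using (\ref{trunc-14}), (\ref{trunc-19}), (\ref{trunc-20}),
\[
B(v,v)\geq\tfrac{q_1}{2}\|v\|^2_{L^2}+q_2\|\nabla_Sv\|^2_{L^2}+\tfrac{q_3}{2}\|v(.,.,E_m)\|^2_{L^2(G\times S)}+\tfrac12\|v\|^2_{T^2(\Gamma_-)}+c'\|v\|^2_{L^2},
\]
which dominates the natural $\Phi$-norm. Continuity of $L$ on $\Phi$ follows from the control of $\|v|_{\Gamma_-}\|_{T^2(\Gamma_-)}$ by $\|v\|_\Phi$, giving $|L(v)|\leq C(\|f\|_{L^2}+\|g\|_{T^2(\Gamma_-)})\|v\|_\Phi$.

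Lions' theorem then produces $\psi\in L^2$ with $B(\psi,v)=L(v)$ for every $v\in\Phi$. Testing against $v\in C_c^\infty(G\times S\times I^\circ)\subset\Phi$ gives $T\psi=f$ distributionally, whence $\psi\in\mc H_P(G\times S\times I^\circ)$; testing with $v$'s having prescribed nonzero traces on $\Gamma_-$ or at $E_m$ and invoking the trace theory of Theorem \ref{tth} together with the density statements of Section \ref{fs} recovers $\gamma_-(\psi)=g$ in $T^2(\Gamma_-)$ and $\psi(.,.,E_m)=0$ in $L^2(G\times S)$, placing $\psi$ in $\mc H$. Uniqueness follows by applying the coercivity estimate (essentially (\ref{K-coer}) combined with the sign conditions (\ref{trunc-14})--(\ref{assfor-a})) to the difference of two solutions, and the a priori bound (\ref{csda40aa}) inherits directly from the coercivity constant and the continuity of $L$.

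The hard part will be the simultaneous requirement that $\Phi$ be rich enough for $T^*v\in L^2$ (so that $B(\psi,v)$ is defined for $\psi\in L^2$) while the coercivity computation still returns the full $\Phi$-norm; the formal adjoint $T^*$ carries the transposed kernels of $K_r^1,K_r^2,K_r^3$, and accommodating both $K_r$ and $K_r^*$ is precisely why the Schur-type bounds (\ref{ass5-a}), (\ref{ass7}), (\ref{ass-8}) are imposed symmetrically and why both (\ref{ass8-aa}) and (\ref{ass9-a}) are needed to apply Theorem \ref{SK-dissip}. A secondary subtlety is the recovery of the initial trace $\psi(.,.,E_m)=0$: the Lions output only provides a distributional identity, and matching it against the prescribed zero datum demands a trace-duality argument in the spirit of \cite{tervo18-up}.
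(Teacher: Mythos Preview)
Your approach is correct and coincides with the one the paper invokes: the paper's own proof is simply a citation to \cite[Theorem~6.13]{tervo19}, and as the introduction and Remark~\ref{adjoint-pr} make explicit, that result is obtained via the Lions--Lax--Milgram scheme with the variational form $\widetilde B$ and the coercivity (\ref{es-1}), which is precisely your $B(v,v)$ estimate built from (\ref{trunc-14})--(\ref{trunc-20}) together with Theorem~\ref{SK-dissip}. The only cosmetic difference is that in \cite{tervo19} the trial Hilbert space is taken to be $\mc H$ directly (with the trace components carried as part of the unknown, cf.\ the pairs $(p_0,q)$ in Remark~\ref{adjoint-pr}), so that the bound (\ref{csda40aa}) falls out of Lions' theorem immediately rather than via the a~posteriori upgrade from $L^2$ you describe.
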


\begin{proof}
See \cite[Theorem 6.13]{tervo19}.
\end{proof}
 
The a priori estimate (\ref{csda40aa}) means that the solution $\psi$
obeys 
\be\label{ex-g9}
\n{\psi}_{\mc H}\leq C(\n{{ T\psi}}_{L^2(G\times S\times I)}+\n{\psi_{|\Gamma_-}}_{T^2(\Gamma_-)}).
\ee

\section{On regularity of solutions of initial inflow boundary value problem by using explicit formulas}\label{rbp}

For $G\not=\R^3$ the relevant problem is  an initial inflow boundary value problem of the form
\be\label{i-i-pr}
T\psi=f, \ \psi_{|\Gamma_-}=g,\ \psi(.,.,E_m)=0.
\ee
The regularity of the solution $\psi$  is limited 
(\cite{tervo17-up}, Example 7.4).  
To explain the nature of the initial inflow boundary value problem under consideration we assume (for simplicity) that $c=0$. 
Then the equation  is the first order partial differential equation
\be\label{ds0-1}
a(x,E){\p \psi{E}}+\omega\cdot\nabla_x\psi
+d(x,\omega,E)\cdot\nabla_S\psi
+\Sigma\psi-K_{r}\psi
 ={ f}
\ee
or equivalently
\be\label{syst-2}
a(x,E)\partial_E\psi
+\sum_{j=1}^3\omega_j\partial_{x_j}\psi
+\sum_{j=1}^2d_j(x,\omega,E)\partial_{\omega_j}\psi
+(\Sigma-K_{r})\psi=f.
\ee
Using the terminology of \cite{nishitani96} and noting that $S$ is a manifold without boundary the
{\it boundary matrix} is                          
\[
A_{\nu}(y,\omega)=
\sum_{j=1}^3\nu_j(y)\omega_j
=\omega\cdot\nu(y), \ (y,\omega)\in\Gamma'.
\] 
We see that the present transport problem  is of variable multiplicity
that is, ${\rm rank}( A_\nu)$ is not constant on $\Gamma'$.

Suppose that $a>0$.
Since the problem  is scaler-valued the operator
(\ref{syst-2}) { is  symmetric hyperbolic}  in its wide sense.
The general theory for hyperbolic problems is largely  valid only for problems where the kernel ${\rm Ker}(A_\nu)$ is zero on $\Gamma'$ or in some extent to the problems where ${\rm rank}(A_\nu)$ is constant on $\Gamma'$ (that is, for problems with constant multiplicity). 
Some results  {for problems with variable multiplicity} can also be found  
in literature but they require 
an additional assumption which is concerning for  the smooth $(n-2)$-dimensional manifold $\Gamma_0'$ (see section \ref{outlines} below).

\subsection{On regularity of solutions for convection-scattering equation  using explicit formulas}\label{reg-up-to}

In the sequel we  calculate some regularity results emerging from explicit formulas of solutions. We assume everywhere in this section that the restricted collision operator $K_r=0$.

Consider 
the  transport problem 
\bea\label{stif1}
&
\omega\cdot\nabla_x\psi+\Sigma\psi=f\ {\rm in}\ G\times S\times I\nonumber\\
&
\psi_{|\Gamma_-}=g
\eea
where $f\in L^2(G\times S\times I)$ and $g\in T^2(\Gamma_-)$. The solution $\psi$ can be given explicitly (see e.g. \cite{tervo17-up}, section 4 and \cite{tervo18-up}, Lemmas 2.12 and 2.14) 
\bea\label{stif2}
&
\psi=\int_0^{t(x,\omega)} e^{-\int_0^t\Sigma(x-s\omega,\omega,E)ds}f(x-t\omega,\omega,E)dt
\nonumber\\
&
+
e^{-\int_0^{t(x,\omega)}\Sigma(x-s\omega,\omega,E)ds}g(x-t(x,\omega)\omega,\omega,E).
\eea
When $G\not=\R^3$ we are not able to deduce regularity results similar to the global case \cite{tervo19-b}. The principle that the solution is more and more regular { in the Sobolev sense} when the data is more and more regular is not valid. This is due to the presence of inflow boundary conditions and one must impose certain regularity properties also for the escape time mapping $t(x,\omega)$. 

We will need the
following form of Fubini's
theorem  (\cite[Remark 2.18]{tervo18-up}, \cite[Lemma 2.1]{choulli}).

\begin{lemma}[(Fubini)]\label{pr:fubini}
Let $\psi\in L^1(G\times S\times I)$. Then
\[
\int_{G\times S\times I} \psi(x,\omega,E) dx d\omega dE
=
\int_{\Gamma_-}\int_0^{\tau_-(y,\omega)} \psi(y+t\omega,\omega,E) |\omega\cdot\nu(y)| dt d\sigma(y) d\omega dE.
\]
\end{lemma}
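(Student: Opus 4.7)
The plan is to treat $\omega$ and $E$ as parameters via the classical Fubini/Tonelli theorem and reduce the full identity to the purely spatial change of variables
\ben
\int_G u(x)\,dx
=
\int_{\Gamma'_{-,\omega}}\int_0^{\tau_-(y,\omega)} u(y+t\omega)\,|\omega\cdot\nu(y)|\,dt\,d\sigma(y)
\quad \forall\, u\in L^1(G),
\een
for each $\omega\in S$, where $\Gamma'_{-,\omega}:=\{y\in\partial G:\omega\cdot\nu(y)<0\}$. Once this spatial identity is established, integrating over $(\omega,E)\in S\times I$ and applying Tonelli's theorem to $|\psi|$ to justify the interchange of iterated integrals, followed by the decomposition $\psi=\psi^+-\psi^-$, gives the full claim.

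First I would fix $\omega\in S$ and introduce the map
\ben
\Phi_\omega:D_\omega\to G,\quad \Phi_\omega(y,t):=y+t\omega,\quad D_\omega:=\{(y,t):y\in\Gamma'_{-,\omega},\ 0<t<\tau_-(y,\omega)\}.
\een
By convexity of $G$ any line $\{x+s\omega:s\in\R\}$ meets $G$ in a single open segment, whose entry endpoint is uniquely determined and belongs to $\Gamma'_{-,\omega}\cup\Gamma'_{0,\omega}$, with $\Gamma'_{0,\omega}:=\{y\in\partial G:\omega\cdot\nu(y)=0\}$. Hence $\Phi_\omega$ is injective, and it maps $D_\omega$ onto $G\setminus N_\omega$ where $N_\omega$ is the union of ``tangential chords'' entering $G$ through $\Gamma'_{0,\omega}$. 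Since $\partial G\in C^1$, for a.e.\ $\omega$ the set $\Gamma'_{0,\omega}$ is $d\sigma$-null (Sard's theorem applied to $y\mapsto\omega\cdot\nu(y)$, or a direct argument using convexity); consequently $N_\omega$, being the image of a null set under the Lipschitz map $(y,t)\mapsto y+t\omega$, has Lebesgue measure zero in $\R^3$.

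Second, I would compute the Jacobian of $\Phi_\omega$ using a finite $C^1$-atlas $\{(U_i,h_i)\}$ of $\partial G$ together with a subordinate partition of unity $\{\chi_i\}$. On each chart $\Psi_i(z,t):=h_i(z)+t\omega$ has derivative with columns $\partial_{z_1}h_i,\partial_{z_2}h_i,\omega$, so
\ben
|\det D\Psi_i(z,t)|
=|\omega\cdot(\partial_{z_1}h_i\times\partial_{z_2}h_i)|
=|\omega\cdot\nu(h_i(z))|\;\|\partial_{z_1}h_i\times\partial_{z_2}h_i\|,
\een
and since $d\sigma(h_i(z))=\|\partial_{z_1}h_i\times\partial_{z_2}h_i\|\,dz$, the classical change of variables formula applied chart by chart and summed via $\sum_i\chi_i\equiv 1$ on $\partial G$ yields the spatial identity above. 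Concatenating with the reduction of the first paragraph completes the proof.

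The main obstacle is the bijectivity step together with the handling of the exceptional tangential set: convexity of $G$ disposes of multiplicity of preimages, but one still needs the $C^1$-regularity of $\partial G$ (or Sard-type considerations) to ensure that $\Gamma'_{0,\omega}$, and hence $N_\omega$, is genuinely negligible; after that the Jacobian calculation is routine and the interchange of integrals is a direct application of Tonelli.
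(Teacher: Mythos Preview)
The paper does not supply its own proof of this lemma: it states the result and cites \cite[Remark 2.18]{tervo18-up} and \cite[Lemma 2.1]{choulli}. Your argument is the standard one underlying those references---parametrise $G$ for fixed $\omega$ by the entry point $y\in\Gamma'_{-,\omega}$ and the travel time $t$, compute the Jacobian of $(z,t)\mapsto h_i(z)+t\omega$ in local charts, and then integrate in $(\omega,E)$ via Tonelli---and it is correct.

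One minor point: the invocation of Sard's theorem to conclude that $\Gamma'_{0,\omega}$ is $d\sigma$-null is not quite right, since Sard controls the measure of critical \emph{values}, not of the preimage of a particular value. The clean justification is Fubini on $\partial G\times S$: for each fixed $y\in\partial G$ the set $\{\omega\in S:\omega\cdot\nu(y)=0\}$ is a great circle and hence $d\omega$-null, so $\Gamma'_0$ has zero product measure, and therefore $\Gamma'_{0,\omega}$ is $d\sigma$-null for a.e.\ $\omega$. With that correction your proof goes through.
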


\subsubsection{First order regularity with respect to spatial variable}\label{foreg}

Let $(y,\omega)\in \Gamma_-'$. Since we assume that $G$ is convex
the escape time mapping is a $C^1$-function with respect to $x$-variable
in a neighbourhood of $(x,\omega)\in G\times S$ (\cite{tervo17-up}, Proposition 4.7.
Define  \emph{weight functions}
\[
m_{1,j}(y,\omega):=
\int_0^{\tau_-(y,\omega)}\big| {\p t{x_j}}(y+t\omega,\omega)\big|^2dt,\ j=1,2,3
\]
and  weighted norms
\[
\n{g}_{T_{m_{1,j}}^2(\Gamma_-)}^2:=\int_{\Gamma_-}|g(y,\omega,E)|^2m_{1,j}(y,\omega)|\omega\cdot\nu(y)| d\sigma(y) d\omega dE.
\]
Let
\[
T_{m_{1,j}}^2(\Gamma_-):=\{g\in L_{\rm loc}^2(\Gamma_-)|\ \n{g}_{T_{m_{1,j}}^2(\Gamma_-)}<\infty\}.
\]

At first we deal with the first order  regularity with respect to $x$-variable.  

\begin{theorem}\label{foregth1}
Assume that
\be\label{ashr4}
\Sigma\in W^{\infty,(1,0,0)}(G\times S\times I^\circ)\cap L^\infty(\Gamma),\ \Sigma\geq c>0,
\ee
\be\label{ashr1-a}
g\in T^2(\Gamma_-)\cap T^2_{m_{1,j}}(\Gamma_-), \ j=1,2,3
\ee
\be\label{ashr1-b} 
\nabla_{(\partial G)}g\in T^2(\Gamma_-)^2\cap T^2_{m_{1,j}}(\Gamma_-)^2,\ j=1,2,3
\ee
where $\nabla_{(\partial G)}$ is the gradient on $\partial G$,
\be\label{ashr2-a}
f\in H^{(1,0,0)}(G\times S\times I^\circ),
\ee
\be\label{ashr2-b}
f_{|\Gamma_-}\in T^2_{m_{1,j}}(\Gamma_-),\ j=1,2,3.
\ee
Then the solution $\psi$ of the problem (\ref{stif1}) belongs to
$H^{(1,0,0)}(G\times S\times I^\circ)$.
\end{theorem}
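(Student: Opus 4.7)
The strategy is to differentiate the explicit solution formula \eqref{stif2} with respect to $x_j$ and estimate each resulting term in $L^2(G\times S\times I)$ using Lemma \ref{pr:fubini} combined with the weighted trace hypotheses. Write $\psi=\psi_f+\psi_g$, where $\psi_f$ is the source integral and $\psi_g$ the boundary contribution. Since $\Sigma\geq c>0$, the exponential factor $e^{-\int_0^t\Sigma(x-s\omega,\omega,E)ds}$ is uniformly bounded, and $\psi\in L^2$ is already known from section \ref{ex-sol}. Set $y:=x-t(x,\omega)\omega\in\partial G$. Formal differentiation of $\psi_f+\psi_g$ in $x_j$ produces three kinds of contributions: \emph{(a)} boundary-type terms proportional to $\frac{\partial t}{\partial x_j}(x,\omega)\Phi(y,\omega,E)$ with $\Phi\in\{f_{|\Gamma_-},\,\Sigma g\}$, arising from the upper limit of $\psi_f$ and from $\partial_{x_j}e^{-\int_0^{t(x,\omega)}\Sigma ds}$ in $\psi_g$; \emph{(b)} interior terms of the form $\int_0^{t(x,\omega)}h(x-s\omega,\omega,E)ds$ where $h$ is built from $\partial_{x_j}f$, $\partial_{x_j}\Sigma\cdot f$, or $\partial_{x_j}\Sigma\cdot g(y,\omega,E)$; and \emph{(c)} the gradient-of-$g$ term
\[
e^{-\int_0^{t(x,\omega)}\Sigma(x-s\omega,\omega,E)ds}\nabla_{(\partial G)}g(y,\omega,E)\cdot\Big(e_j-\frac{\partial t}{\partial x_j}(x,\omega)\omega\Big).
\]
Since $x\mapsto y(x)$ takes values in $\partial G$, the Jacobian $\partial_{x_j}y=e_j-\frac{\partial t}{\partial x_j}\omega$ is tangent to $\partial G$ at $y$, which legitimises interpretation of this derivative through the intrinsic gradient $\nabla_{(\partial G)}g$.

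For (a), Lemma \ref{pr:fubini} with the substitution $x=y_0+t_0\omega$, $y_0\in\Gamma_-$, $t_0\in[0,\tau_-(y_0,\omega)]$ yields $y(x,\omega)=y_0$ independently of $t_0$, whence
\[
\int_{G\times S\times I}\Big|\frac{\partial t}{\partial x_j}(x,\omega)\Phi(y,\omega,E)\Big|^2 dxd\omega dE=\int_{\Gamma_-}|\Phi(y_0,\omega,E)|^2 m_{1,j}(y_0,\omega)|\omega\cdot\nu(y_0)|d\sigma d\omega dE=\|\Phi\|_{T^2_{m_{1,j}}(\Gamma_-)}^2,
\]
finite by \eqref{ashr1-a}, \eqref{ashr4}, and \eqref{ashr2-b}. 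For interior terms in (b) whose kernel $h=h(x,\omega,E)$ depends on $x$, Cauchy--Schwarz on $s\in[0,d]$ followed by Lemma \ref{pr:fubini} yields $\|\int_0^{t(\cdot,\cdot)}h(\cdot-s\cdot,\cdot,\cdot)ds\|_{L^2}\leq d\,\|h\|_{L^2}$, finite by \eqref{ashr4} and \eqref{ashr2-a}. The one remaining interior term carries a $g(y,\omega,E)$ independent of $s$; factoring it out, applying the $L^\infty$ bound on $\partial_{x_j}\Sigma$, and using Fubini gives a bound by $\sqrt{d}\|g\|_{T^2(\Gamma_-)}$, finite by \eqref{ashr1-a}. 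For (c) one uses $|\nabla_{(\partial G)}g(y)\cdot(e_j-\frac{\partial t}{\partial x_j}\omega)|\leq|\nabla_{(\partial G)}g(y)|(1+|\frac{\partial t}{\partial x_j}|)$; the two summands are bounded via Fubini by $\sqrt{d}\,\|\nabla_{(\partial G)}g\|_{T^2(\Gamma_-)}$ and $\|\nabla_{(\partial G)}g\|_{T^2_{m_{1,j}}(\Gamma_-)}$ respectively, both finite by \eqref{ashr1-b}.

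Combining these bounds gives $\partial_{x_j}\psi\in L^2(G\times S\times I)$ for $j=1,2,3$, hence $\psi\in H^{(1,0,0)}(G\times S\times I^\circ)$. To identify the formal derivative with the weak derivative one regularises $f,g,\Sigma$ by smooth data satisfying the hypotheses uniformly, performs the above computation classically on the approximants, and passes to the limit by the uniform bounds. The main obstacle lies in the term (c): one must verify that $\partial_{x_j}y$ is genuinely tangent to $\partial G$ at $y$ so that only the intrinsic boundary datum \eqref{ashr1-b} is invoked, and one must secure enough $C^1$-regularity of the escape time $t(x,\omega)$ to guarantee the a.e.\ finiteness of $m_{1,j}$ on $\Gamma_-$; both are supplied by the convexity of $G$ together with Proposition 4.7 of \cite{tervo17-up}.
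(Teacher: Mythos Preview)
Your proposal is correct and follows essentially the same route as the paper. The paper likewise splits $\psi=\psi_1+\psi_2$, differentiates each piece to obtain six terms (labelled $h_1,h_2,h_3$ and $q_1,q_2,q_3$), and estimates them exactly as you do: the boundary-type terms $h_3,q_1$ via Lemma~\ref{pr:fubini} and the weighted norms $T^2_{m_{1,j}}(\Gamma_-)$, the interior integrals $h_1,h_2,q_2$ via Cauchy--Schwarz and an extension-by-zero change of variables, and $q_3$ via the tangency of $e_j-\tfrac{\partial t}{\partial x_j}\omega$ to $\partial G$, which the paper verifies explicitly along the lines you indicate.
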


\begin{proof}
Denote
\[
\psi_1:=\int_0^{t(x,\omega)} e^{-\int_0^t\Sigma(x-s\omega,\omega,E)ds}f(x-t\omega,\omega,E)dt,
\]
\[
\psi_2:=
e^{-\int_0^{t(x,\omega)}\Sigma(x-s\omega,\omega,E)ds}g(x-t(x,\omega)\omega,\omega,E).
\]
Then by (\ref{stif2}) $\psi=\psi_1+\psi_2$.

A.
Consider at first $\psi_1$. We have
\bea\label{hreg2}
{\p {\psi_1}{x_j}}
&=\int_0^{t(x,\omega)}\big(-\int_0^t{\p \Sigma{x_j}}(x-s\omega,\omega,E)ds\big)
\cdot e^{-\int_0^t\Sigma(x-s\omega,\omega,E)ds}\cdot
f(x-t\omega,\omega,E)dt\nonumber\\
&
+\int_0^{t(x,\omega)} e^{-\int_0^t\Sigma(x-s\omega,\omega,E)ds} {\p {f}{x_j}}(x-t\omega,\omega,E)dt
\nonumber\\
&
+
e^{-\int_0^{t(x,\omega)}\Sigma(x-s\omega,\omega,E)ds}\ f(x-t(x,\omega)\omega,\omega,E)\ {\p t{x_j}}((x,\omega)\nonumber\\
&
=:h_1+h_2+h_3.
\eea

A.1.
Furthermore, since $\Sigma\geq 0$ and since $0\leq t\leq t(x,\omega)\leq d={\rm diam}(G)$ we have
\bea\label{hreg2-a}
&
|h_1(x,\omega,E)|\leq
\int_0^{t(x,\omega)}\big|\int_0^t{\p \Sigma{x_j}}(x-s\omega,\omega,E)ds\big|
\cdot e^{-\int_0^t\Sigma(x-s\omega,\omega,E)ds}\cdot
|f(x-t\omega,\omega,E)|dt\nonumber
\\
&
\leq 
\int_0^{t(x,\omega)}\int_0^t\big|{\p \Sigma{x_j}}(x-s\omega,\omega,E)\big|ds
\cdot
|f(x-t\omega,\omega,E)|dt\nonumber\\
&
\leq
d\n{{\p \Sigma{x_j}}}_{L^\infty(G\times S\times I)}
\int_0^{t(x,\omega)}|f(x-t\omega,\omega,E)|dt\nonumber\\
&
\leq
d\n{{\p \Sigma{x_j}}}_{L^\infty(G\times S\times I)}\sqrt{d}
\Big(\int_0^{t(x,\omega)}|f(x-t\omega,\omega,E)|^2dt\Big)^{1/2}
\eea
where we in the last step applied Cauchy-Schwartz's inequality.
Let $\ol f$ be the extension by zero of $f$  on 
$\R^3\times S\times I$.
Then  we get by (\ref{hreg2-a}) an estimate (note that $x-t\omega\in G$ for $0\leq t\leq t(x,\omega)$)
\bea\label{e-1}
&
\n{h_1}_{L^2(G\times S\times I)}^2
\leq
d^3\n{{\p \Sigma{x_j}}}_{L^\infty(G\times S\times I)}^2
\int_G\int_S\int_I\int_0^{t(x,\omega)}|f(x-t\omega,\omega,E)|^2dt dE d\omega dx
\nonumber\\
&
\leq
d^3\n{{\p \Sigma{x_j}}}_{L^\infty(G\times S\times I)}^2\int_G
\int_S\int_I\int_0^{d}|\ol f(x-t\omega,\omega,E)|^2dt dE d\omega dx
\nonumber\\
&
=
d^3\n{{\p \Sigma{x_j}}}_{L^\infty(G\times S\times I)}^2
\int_0^{d}\int_S\int_I\int_G|\ol f(x-t\omega,\omega,E)|^2dx dE d\omega dt
\nonumber\\
&
\leq 
d^3\n{{\p \Sigma{x_j}}}_{L^\infty(G\times S\times I)}^2
\int_0^{d}\int_S\int_I\int_{\R^3}|\ol f(x-t\omega,\omega,E)|^2dx dE d\omega dt
\nonumber\\
&
=
d^3\n{{\p \Sigma{x_j}}}_{L^\infty(G\times S\times I)}^2
\int_0^{d}\int_S\int_I\int_{\R^3}|\ol f(z,\omega,E)|^2dz dE d\omega dt
\nonumber\\
&
=
d^4\n{{\p \Sigma{x_j}}}_{L^\infty(G\times S\times I)}^2
\int_S\int_I\int_G|f(z,\omega,E)|^2 dz dE d\omega\nonumber\\
&
=d^4\n{{\p \Sigma{x_j}}}_{L^\infty(G\times S\times I)}^2
\n{f}_{L^2(G\times S\times I)}^2
\eea
where we applied the change of variables $z=x-t\omega$.

A.2. For the term $h_2$ we get
\[
&
|h_2(x,\omega,E)|\leq
\int_0^{t(x,\omega)} e^{-\int_0^t\Sigma(x-s\omega,\omega,E)ds}\cdot
\big|{\p f{x_j}}(x-t\omega,\omega,E)\big|dt
\leq
\int_0^{t(x,\omega)} 
\big|{\p f{x_j}}(x-t\omega,\omega,E)\big|dt
\\
&
\leq
\sqrt{d}
\Big(\int_0^{t(x,\omega)}\big|{\p f{x_j}}(x-t\omega,\omega,E)\big|^2dt\Big)^{1/2}
\]
and so as in Part A.1. we have  an estimate
\bea\label{e-2}
&
\n{h_2}_{L^2(G\times S\times I)}^2
\leq
d
\int_G\int_S\int_I\int_0^{t(x,\omega)}\big|{\p f{x_j}}(x-t\omega,\omega,E)\big|^2dt dE d\omega dx
\nonumber\\
&
\leq 
d\int_0^{d}\int_S\int_I\int_{\R^3}\big|\ol {{\p f{x_j}}}(x-t\omega,\omega,E)\big|^2dx dE d\omega dt
=
d^2
\int_S\int_I\int_G\big|{\p f{x_j}}(z,\omega,E)\big|^2 dz dE d\omega \nonumber\\
&
=d^2
\n{{\p f{x_j}}}_{L^2(G\times S\times I)}^2
.
\eea

A.3. For the term $h_3$ we have
\[
|h_3(x,\omega,E)|\leq
\big| f(x-t(x,\omega)\omega,\omega,E)\ {\p t{x_j}}((x,\omega)\big|.
\]
Utilizing Lemma \ref{pr:fubini} and noting that $t(y+t\omega,\omega)=t$ we have
\bea\label{e-3}
&
\n{h_3}_{L^2(G\times S\times I)}^2
\leq
\int_S\int_I\int_G\big| f(x-t(x,\omega)\omega,\omega,E)\ {\p t{x_j}}(x,\omega)\big|^2 dx d\omega dE\nonumber\\
&
=
\int_{\Gamma_-}\int_0^{\tau_-(y,\omega)}\big| f(y+t\omega-t(y+t\omega,\omega)\omega,\omega,E)\ {\p t{x_j}}(y+t\omega,\omega)\big|^2 |\omega\cdot\nu(y)|dt d\sigma(y) d\omega dE\nonumber\\
&
=
\int_{\Gamma_-}\int_0^{\tau_-(y,\omega)}\big| f(y,\omega,E)\ {\p t{x_j}}(y+t\omega,\omega)\big|^2 |\omega\cdot\nu(y)|dt d\sigma(y) d\omega dE
\nonumber\\
&
=
\int_{\Gamma_-}| f(y,\omega,E)|^2\int_0^{\tau_-(y,\omega)}\big| {\p t{x_j}}(y+t\omega,\omega)\big|^2 |\omega\cdot\nu(y)|dt d\sigma(y) d\omega dE
\nonumber\\
&
=\int_{\Gamma_-}| f(y,\omega,E)|^2m_{1,j}(y,\omega) |\omega\cdot\nu(y)| d\sigma(y) d\omega dE=\n{f_{|\Gamma_-}}_{T_{m_{1,j}}^2(\Gamma_-)}^2
\eea

B.
Consider the term $\psi_2$.
\[
\psi_2=
e^{-\int_0^{t(x,\omega)}\Sigma(x-s\omega,\omega,E)ds}g(x-t(x,\omega)\omega,\omega,E).
\]
By the chain rule we have for $(x,\omega,E)\in G\times S\times I$
\bea\label{e-4}
&
{\p {\psi_2}{x_j}}(x,\omega,E)= 
{\partial\over{\partial x_j}}\big(e^{-\int_0^{t(x,\omega)}\Sigma(x-s\omega,\omega,E)ds}\big)\ g(x-t(x,\omega)\omega,\omega,E)\nonumber\\
&
+
e^{-\int_0^{t(x,\omega)}\Sigma(x-s\omega,\omega,E)ds}\la \nabla_{(\partial G)}g(x-t(x,\omega)\omega,\omega,E),e_j-{\p t{x_j}}(x,\omega)\omega\ra
\eea
Note that for a fixed $\omega$ the term $x-t(x,\omega)\omega$ lies in $\partial G$.
Hence the mappings $\xi_j: s\to (x+se_j)-t(x+se_j,\omega)\omega$ are curves in $\partial G$ and so $\xi_j'(0)=e_j-{\p t{x_j}}(x,\omega)\omega\in T(\partial G)$ for $(x,\omega)\in G\times S$. More explicitly, the fact that $e_j-{\p t{x_j}}(x,\omega)\omega\in T(\partial G)$ can be seen as follows.
Let $\omega$ be fixed and let $x_0\in G$. Then there exists a 
neighbourhood $U$ of $x_0$ such that $\{y:=x-t(x,\omega)\omega|x\in U\}$ is a subset of $\partial G$. Let $y_0:=x_0-t(x_0,\omega)$.  Then there exists a neighbourhood $V$ of $y_0$ and a $C^1$-mapping $H:V\to\R$ such that  $\{z\in V|H(z)=0\}=V\cap \partial G$. Hence there exists a neighbourhood $U'$ of $x_0$ such that $H(x-t(x,\omega)\omega)=0$ for $x\in U'$ and so 
\[
{\partial\over{\partial x_j}}\big(H(x-t(x,\omega)\omega)\big)=\la\nabla_zH(x-t(x,\omega)\omega),e_j-{\p t{x_j}}(x,\omega)\omega\ra=0.
\] 
On the other hand, $\nabla_zH(x-t(x,\omega)\omega)\parallel \nu(x-t(x,\omega)\omega)$ which shows that\\ $\la\nu(x-t(x,\omega)\omega),e_j-{\p t{x_j}}(x,\omega)\omega\ra=0$, as desired.

Furthermore,
\[
&
{\partial\over{\partial x_j}}\big(e^{-\int_0^{t(x,\omega)}\Sigma(x-s\omega,\omega,E)ds}\big)\\
&
=
{\partial\over{\partial x_j}}\big(-\int_0^{t(x,\omega)}\Sigma(x-s\omega,\omega,E)ds\big)e^{-\int_0^{t(x,\omega)}\Sigma(x-s\omega,\omega,E)ds}\\
&
=
\Big(-\Sigma(x-t(x,\omega)\omega,\omega,E){\p t{x_j}}(x,\omega)
\\
&
-\int_0^{t(x,\omega)}{\p \Sigma{x_j}}(x-s\omega,\omega,E)ds\Big)
e^{-\int_0^{t(x,\omega)}\Sigma(x-s\omega,\omega,E)ds}.
\]
That is why,
\bea\label{e-5}
&
{\p {\psi_2}{x_j}}(x,\omega,E)= 
-\Sigma(x-t(x,\omega)\omega,\omega,E){\p t{x_j}}(x,\omega)e^{-\int_0^{t(x,\omega)}\Sigma(x-s\omega,\omega,E)ds}
\ g(x-t(x,\omega)\omega,\omega,E)
\\
&
-\int_0^{t(x,\omega)}{\p \Sigma{x_j}}(x-s\omega,\omega,E)ds\ 
e^{-\int_0^{t(x,\omega)}\Sigma(x-s\omega,\omega,E)ds} \ g(x-t(x,\omega)\omega,\omega,E)\nonumber\\
&
+
e^{-\int_0^{t(x,\omega)}\Sigma(x-s\omega,\omega,E)ds}\la \nabla_{(\partial G)}g(x-t(x,\omega)\omega,\omega,E),e_j-{\p t{x_j}}(x,\omega)\omega\ra
\nonumber\\
&
=:q_1+q_2+q_3.
\eea

B.1. Since $\Sigma\geq 0$ and since $x-t(x,\omega)\omega\in\partial G$ we have
for the term $q_1$
\[
&
|q_1(x,\omega,E)|\leq 
|\Sigma(x-t(x,\omega)\omega,\omega,E)|\ 
\ |g(x-t(x,\omega)\omega,\omega,E)|\ \big|{\p t{x_j}}(x,\omega)\big|\\
&
\leq 
\n{\Sigma}_{L^\infty(\Gamma)}
|g(x-t(x,\omega)\omega,\omega,E)|\ \big|{\p t{x_j}}(x,\omega)\big|
\]
and so we get as above by Lemma \ref{pr:fubini}
\bea\label{e-6}
&
\n{q_1}_{L^2(G\times S\times I)}^2
\leq 
\n{\Sigma}_{L^\infty(\Gamma)}^2
\int_S\int_I\int_G
\big|g(x-t(x,\omega)\omega,\omega,E)\big|^2\ \big|{\p t{x_j}}(x,\omega)\big|^2 dx d\omega dE\nonumber\\
&
=
\n{\Sigma}_{L^\infty(\Gamma)}^2
\int_{\Gamma_-}\int_0^{\tau_-(y,\omega)}
|g(y,\omega,E)|^2\ \big|{\p t{x_j}}(y+t\omega,\omega)\big|^2|\omega\cdot\nu(y)| dt\ d\sigma(y) d\omega dE\nonumber\\
&
=\n{\Sigma}_{L^\infty(\Gamma)}^2\n{g}_{T^2_{m_{1,j}}(\Gamma_-)}^2.
\eea

B.2. 
For the term $q_2$ we have (since $\Sigma\geq 0$ and $0\leq t(x,\omega)\leq d$)
\[
&
|q_2(x,\omega,E)|\leq 
\int_0^{t(x,\omega)}\big|{\p \Sigma{x_j}}(x-s\omega,\omega,E)\big|ds\ 
 \ |g(x-t(x,\omega)\omega,\omega,E)|\\
&
\leq 
d \n{{\p \Sigma{x_j}}}_{L^\infty(G\times S\times I)}\ |g(x-t(x,\omega)\omega,\omega,E)|
\] 
and so 
\bea\label{e-7}
&
\n{q_2}_{L^2(G\times S\times I)}^2
\leq
\int_G\int_S\int_I
d^2 \n{{\p \Sigma{x_j}}}_{L^\infty(G\times S\times I)}^2\ |g(x-t(x,\omega)\omega,\omega,E)|^2 dE d\omega dx\nonumber\\
&
=
d^2 \n{{\p \Sigma{x_j}}}_{L^\infty(G\times S\times I)}^2\n{L_-g}_{L^2(G\times S\times I)}^2
\eea
where $L_-g$ is the lift given in section \ref{fs} (see Remark \ref{lift}).

B.3. 
For the term $q_3$ we obtain  (since $\Sigma\geq 0$)
\[
&
|q_3(x,\omega,E)|=
e^{-\int_0^{t(x,\omega)}\Sigma(x-s\omega,\omega,E)ds}\big|\la \nabla_{(\partial G)}g(x-t(x,\omega)\omega,\omega,E),e_j-{\p t{x_j}}(x,\omega)\omega\ra\big|
\\
&
\leq 
\big|\la \nabla_{(\partial G)}g(x-t(x,\omega)\omega,\omega,E),e_j-{\p t{x_j}}(x,\omega)\omega\ra\big|
\]
which implies by Lemma \ref{pr:fubini} (since $t(y+t\omega,\omega)=t$)
\bea\label{e-8}
&
\n{q_3}_{L^2(G\times S\times I)}^2
\leq 
\int_S\int_I\int_G\big|\la \nabla_{(\partial G)}g(x-t(x,\omega)\omega,\omega,E),e_j-{\p t{x_j}}(x,\omega)\omega\ra\big|^2 dx dE d\omega \nonumber\\
&
=
\int_{\Gamma_-}\int_0^{\tau_-(y,\omega)}\big|\la \nabla_{(\partial G)}g(y,\omega,E),e_j-{\p t{x_j}}(y+t\omega,\omega)\omega\ra\big|^2|\omega\cdot\nu(y)| dt d\sigma(y) d\omega dE.
\eea
Since
\[
&
\big|\la \nabla_{(\partial G)}g(y,\omega,E),e_j-{\p t{x_j}}(y+t\omega,\omega)\omega\ra\big|
\leq 
\n{\nabla_{(\partial G)}g(y,\omega,E)}
\n{e_j-{\p t{x_j}}(y+t\omega,\omega)\omega}
\\
&
\leq 
\n{\nabla_{(\partial G)}g(y,\omega,E)}
(1+\big|{\p t{x_j}}(y+t\omega,\omega)\big|)
\]
we see that
\bea\label{e-9}
&
\n{q_3}_{L^2(G\times S\times I)}^2
\leq 
\int_{\Gamma_-}\int_0^{\tau_-(y,\omega)}(\n{\nabla_{(\partial G)}g(y,\omega,E)}
(1+\big|{\p t{x_j}}(y+t\omega,\omega)\big|))^2|\omega\cdot\nu(y)| dt d\sigma(y) d\omega dE
\nonumber\\
&
\leq
2\big(d\n{\nabla_{(\partial G)}g}_{{T^2(\Gamma_-)}^2}^2+
\n{\nabla_{(\partial G)}g}_{{T_{m_{1,j}}^2(\Gamma_-)}^2}^2\big).
\eea

From (\ref{e-1}), (\ref{e-2}), (\ref{e-3}), (\ref{e-6}), (\ref{e-7}), (\ref{e-9}) we obtain the estimate
\bea\label{e-10}
&
\n{{\p \psi{x_j}}}_{L^2(G\times S\times I)}\leq \n{{\p {\psi_1}{x_j}}}_{L^2(G\times S\times I)}+\n{{\p {\psi_2}{x_j}}}_{L^2(G\times S\times I)}\nonumber\\
&
\leq 
\n{h_1}_{L^2(G\times S\times I)}+\n{h_2}_{L^2(G\times S\times I)}
+\n{h_3}_{L^2(G\times S\times I)}\nonumber\\
&
+\n{q_1}_{L^2(G\times S\times I)}
+\n{q_2}_{L^2(G\times S\times I)}+\n{q_3}_{L^2(G\times S\times I)}\nonumber\\
&
\leq 
d^2\n{{\p \Sigma{x_j}}}_{L^\infty(G\times S\times I)}
\n{f}_{L^2(G\times S\times I)}
+
d
\n{{\p f{x_j}}}_{L^2(G\times S\times I)}
+
\n{f_{|\Gamma_-}}_{T_{m_{1,j}}^2(\Gamma_-)}\nonumber\\
&
+
\n{\Sigma}_{L^\infty(\Gamma)}\n{g}_{T^2_{m_{1,j}}(\Gamma_-)}
+
d \n{{\p \Sigma{x_j}}}_{L^\infty(G\times S\times I)}\n{L_-g}_{L^2(G\times S\times I)}\nonumber\\
&
+
\sqrt{2}\sqrt{d}\n{\nabla_{(\partial G)}g}_{T^2(\Gamma_-)^2}+
\sqrt{2}\n{\nabla_{(\partial G)}g}_{T_{m_{1,j}}^2(\Gamma_-)^2}.
\eea
Hence ${\p \psi{x_j}}\in L^2(G\times S\times I)$ for $j=1,2,3$ that is, $\psi\in H^{(1,0,0)}(G\times S\times I^\circ)$ which completes the proof.

\end{proof}

The next example shows that our assumptions using weighted space data is reasonable. 

\begin{example}\label{rbpex1-ss}

For an open ball $G=B(0,R)\subset\R^3$ we have
\[
t(x,\omega)= x\cdot\omega+\sqrt{ (x\cdot\omega) ^2+R^2-|x|^2}
\]
and so
\[
{\p t{x_j}}(x,\omega)=\omega_j+{{ (x\cdot\omega )\omega_j-x_j}\over{\sqrt{ (x\cdot\omega) ^2+R^2-|x|^2}}}.
\]
In addition, for $(y,\omega)\in \Gamma_-$ we have 
\[
\nu(y)=y/R\  {\rm and }\ 
\tau_-(y,\omega)=2|y\cdot\omega|.
\]

Recall that
\[
m_{1,j}(y,\omega)=
\int_0^{\tau_-(y,\omega)}\big| {\p t{x_j}}((y+t\omega,\omega)\big|^2dt,\ j=1,2,3
\]
and the weighted norm
\[
\n{g}_{T_{m_{1,j}}^2(\Gamma_-)}^2:=\int_{\Gamma_-}|g(y,\omega,E)|^2m_{1,j}(y,\omega)|\omega\cdot\nu(y)| d\sigma(y) d\omega dE.
\]
Noting that on $\Gamma_-$ 
\[
|y\cdot\omega|=R|\nu(y)\cdot\omega|=- R(\nu(y)\cdot\omega)=-y\cdot\omega
\] 
we obtain for $(y,\omega)\in \Gamma_-$ (since additionally $|y|=R$ and $\tau_-(y,\omega)=2|y\cdot\omega|$)
\bea
&
m_{1,j}(y,\omega)=
\int_0^{\tau_-(y,\omega)}\Big|
{\p t{x_j}}(y+t\omega,\omega)\Big|^2 dt
=
\int_0^{\tau_-(y,\omega)}\Big|\omega_j+{{(y\cdot\omega)\omega_j-y_j}\over{|y\cdot\omega|}}\big|^2dt
\nonumber\\
&
=2{{y_j^2}\over{|\omega\cdot y|}}={2\over R}{{y_j^2}\over{|\omega\cdot\nu(y)|}}.
\eea
Hence 
\be\label{gnorm-a}
\n{g}_{T_{m_{1,j}}^2(\Gamma_-)}^2={2\over R}\int_{\Gamma_-}|g(y,\omega,E)|^2y_j^2 d\sigma(y) d\omega dE
\ee
which implies that for example,
\[
L^2(\Gamma_-)\subset T^2_{m_{1,j}}(\Gamma_-).
\]
Note also that $L^2(\Gamma_-)\subset T^2(\Gamma_-)$.

In the setup of this example we have by  Theorem \ref{foregth1} the following regularity result:

Assume that
\be\label{ashr4-d}
\Sigma\in W^{\infty,(1,0,0)}(G\times S\times I^\circ)\cap L^\infty(\Gamma_-),\ \Sigma\geq c>0,
\ee
\be\label{ashr1-a-d}
g\in L^2(\Gamma_-), 
\ee
\be\label{ashr1-b-d} 
\nabla_{(\partial G)}g\in L^2(\Gamma_-)^2,
\ee
\be\label{ashr2-a-d}
f\in H^{(1,0,0)}(G\times S\times I^\circ),
\ee
\be\label{ashr2-a-d-b}
f_{|\Gamma-}\in L^2(\Gamma_-).
\ee
Then the solution $\psi$ of the problem (\ref{stif1}) belongs to
$H^{(1,0,0)}(G\times S\times I^\circ)$.

\end{example}

\subsubsection{First order regularity with respect to angular variable}. \label{foregan}

Next we consider regularity with respect to $\omega$-variable. We give only outlines since the details are analogous to the analysis of the previous section \ref{foreg}.
Define for $j=1,2$  \emph{weight functions} 
\[
m_{2,j}(y,\omega):=
\int_0^{\tau_-(y,\omega)}\big| {\p t{\omega_j}}((y+t\omega,\omega)\big|^2dt
\]
and  weighted norms
\[
\n{g}_{T_{m_{2,j}}^2(\Gamma_-)}^2:=\int_{\Gamma_-}|g(y,\omega,E)|^2m_{2,j}(y,\omega)|\omega\cdot\nu(y)| d\sigma(y) d\omega dE.
\]
Let
\[
T_{m_{2,j}}^2(\Gamma_-):=\{g\in L_{\rm loc}^2(\Gamma_-)|\ \n{g}_{T_{m_{2,j}}^2(\Gamma_-)}<\infty\}.
\]

We have the following theorem which is analogous to the above Theorem \ref{foregth1}.

\begin{theorem}\label{foregth2}
Assume that 
\be\label{om-1}
\Sigma\in W^{\infty,(1,0,0)}(G\times S\times I^\circ)\cap W^{\infty,(0,1,0)}(G\times S\times I^\circ)\cap L^\infty(\Gamma_-),\ \Sigma\geq c>0,
\ee
\be\label{om-2}
g\in T^2(\Gamma_-)\cap T^2_{m_{2,j}}(\Gamma_-), \ j=1,2
\ee
\be\label{om-3} 
\nabla_{(\partial G)}g\in T^2(\Gamma_-)^2\cap T^2_{m_{2,j}}(\Gamma_-)^2,\ j=1,2,
\ee
\be\label{om-3-a} 
{\p {g}{\omega_j}}\in T^2(\Gamma_-),\ j=1,2,
\ee
\be\label{om-4}
f\in H^{(1,0,0)}(G\times S\times I^\circ)\cap H^{(0,1,0)}(G\times S\times I^\circ),
\ee
\be\label{om-5}
f_{|\Gamma-}\in T^2_{m_{2,j}}(\Gamma_-),\ j=1,2.
\ee
Then the solution $\psi$ of the problem (\ref{stif1}) belongs to
$H^{(0,1,0)}(G\times S\times I^\circ)$.
\end{theorem}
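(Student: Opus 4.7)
The plan is to mimic the proof of Theorem \ref{foregth1} step by step, replacing $\partial_{x_j}$ by $\partial_{\omega_j}$ and carefully tracking the extra terms created by the fact that in $\psi_1$ and $\psi_2$ the variable $\omega$ enters through \emph{three} channels at once: in the argument $x-s\omega$ (resp.\ $x-t\omega$) of $\Sigma$ and $f$, in the direct slot of $\Sigma,f,g$, and through $t(x,\omega)$. Starting from the decomposition $\psi=\psi_1+\psi_2$ in (\ref{stif2}), the chain rule presents $\partial_{\omega_j}\psi_1$ as a sum of three types of contributions: (i) terms from $\partial_{\omega_j}\exp(-\int_0^t\Sigma(x-s\omega,\omega,E)ds)$, producing both $s\,\partial_{x_l}\Sigma(x-s\omega,\omega,E)$ and $\partial_{\omega_j}\Sigma(x-s\omega,\omega,E)$ inside the $s$-integral; (ii) terms from $\partial_{\omega_j}f(x-t\omega,\omega,E)$, namely $t\,\partial_{x_l}f$ and $\partial_{\omega_j}f$ evaluated along $x-t\omega$; and (iii) the boundary contribution $e^{-\int\Sigma}\,f(x-t(x,\omega)\omega,\omega,E)\,\partial_{\omega_j}t(x,\omega)$ from the upper limit of the outer integral. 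The expansion of $\partial_{\omega_j}\psi_2$ is completely analogous with $g$ in place of $f$, and produces additionally the genuinely new term from $\partial_{\omega_j}g(x-t(x,\omega)\omega,\omega,E)$ in which the $x$-slot contributes a pairing of $\nabla_{(\partial G)}g$ with $\partial_{\omega_j}(x-t(x,\omega)\omega)$.

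I then estimate each summand in $L^2(G\times S\times I)$ exactly by the scheme of Parts A--B of Theorem \ref{foregth1}. Interior pieces (those containing an inner integral over $s$ or $t$ in $[0,t(x,\omega)]$) are handled by extending the integrand by zero to $\R^3$, applying Fubini, and using the volume-preserving change of variables $z=x-s\omega$ or $z=x-t\omega$; the prefactors $s,t\le d$ are harmless and yield bounds of the form $d^k\n{\partial_{x_l}\Sigma}_{L^\infty}\n{f}_{L^2}$, $d\n{\partial_{\omega_j}\Sigma}_{L^\infty}\n{f}_{L^2}$, $d\n{\partial_{x_l}f}_{L^2}$, and $\n{\partial_{\omega_j}f}_{L^2}$, all finite by (\ref{om-1}) and (\ref{om-4}). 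Boundary pieces carrying the factor $\partial_{\omega_j}t(x,\omega)$ are reduced to integrals over $\Gamma_-$ via Lemma \ref{pr:fubini}; since $t(y+t\omega,\omega)=t$, the inner $t$-integral produces exactly the weight
\[
m_{2,j}(y,\omega)=\int_0^{\tau_-(y,\omega)}\bigl|\partial_{\omega_j}t(y+t\omega,\omega)\bigr|^2dt,
\]
so these contributions are controlled by $\n{\Sigma}_{L^\infty(\Gamma)}\n{g}_{T^2_{m_{2,j}}(\Gamma_-)}$ and $\n{f_{|\Gamma_-}}_{T^2_{m_{2,j}}(\Gamma_-)}$, finite by (\ref{om-2}) and (\ref{om-5}).

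The tangential term in $\partial_{\omega_j}\psi_2$ is treated by the geometric argument of Part B.3: for fixed $x\in G$ the map $\omega\mapsto x-t(x,\omega)\omega$ takes values in $\partial G$, so its $\omega_j$-derivative lies in $T(\partial G)$ at the image point. Hence only the intrinsic gradient $\nabla_{(\partial G)}g$ enters, paired with a vector whose norm is bounded by $C(1+|\partial_{\omega_j}t(x,\omega)|)$, and a final application of Lemma \ref{pr:fubini} dominates this contribution by $\sqrt{d}\,\n{\nabla_{(\partial G)}g}_{T^2(\Gamma_-)^2}+\n{\nabla_{(\partial G)}g}_{T^2_{m_{2,j}}(\Gamma_-)^2}$, finite by (\ref{om-3}). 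The remaining direct-slot term $\partial_{\omega_j}g(x-t(x,\omega)\omega,\omega,E)$ is estimated as in Part B.2 using the $L^2$-continuity of the lift $L_-$ applied to $\partial_{\omega_j}g$, which is controlled by (\ref{om-3-a}). Summing all estimates yields $\partial_{\omega_j}\psi\in L^2(G\times S\times I)$ for $j=1,2$, hence $\psi\in H^{(0,1,0)}(G\times S\times I^\circ)$.

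The step I expect to be the principal subtlety is the geometric identification that $\partial_{\omega_j}(x-t(x,\omega)\omega)\in T(\partial G)$, which is what allows the hypothesis to demand only the intrinsic boundary gradient $\nabla_{(\partial G)}g$ rather than the full extrinsic gradient of an extension of $g$. Once this is verified by the same $H(x-t(x,\omega)\omega)=0$ argument used in Part B of the previous proof, now differentiated in $\omega_j$ instead of $x_j$, all remaining estimates are routine adaptations of the spatial-regularity argument, and the proof is essentially a careful bookkeeping exercise on the six (rather than three) slot-chain-rule terms generated by $\partial_{\omega_j}$.
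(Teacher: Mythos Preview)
Your proposal is correct and follows essentially the same approach as the paper: the paper also decomposes $\psi=\psi_1+\psi_2$, writes $\partial_{\omega_j}\psi_1=H_1+H_2+H_3$ and $\partial_{\omega_j}\psi_2=Q_1+Q_2+Q_3+Q_4$ exactly along the lines you describe (with the tangent vectors $\ol\Omega_j(\omega)=\partial_{\omega_j}I_S$ made explicit), and then simply states that the $L^2$-estimates for these seven terms follow by the same techniques as in Theorem~\ref{foregth1}, omitting further details. Your identification of the tangentiality of $\partial_{\omega_j}(x-t(x,\omega)\omega)$ as the one point requiring care, and your handling of the $Q_4$ term via the lift applied to $\partial_{\omega_j}g$, match the paper's (implicit) treatment.
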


\begin{proof}
Denote again
\[
\psi_1:=\int_0^{t(x,\omega)} e^{-\int_0^t\Sigma(x-s\omega,\omega,E)ds}f(x-t\omega,\omega,E)dt,
\]
\[
\psi_2:=
e^{-\int_0^{t(x,\omega)}\Sigma(x-s\omega,\omega,E)ds}g(x-t(x,\omega)\omega,\omega,E).
\]
Then by (\ref{stif2}) $\psi=\psi_1+\psi_2$.

Consider the term $\psi_1$.  
We have by similar computations as in (\ref{hreg2})
\bea\label{om-6}
{\p {\psi_1}{\omega_j}}
&=\int_0^{t(x,\omega)}\big(-\int_0^t\Big(\la -s\ol\Omega_j(\omega), \nabla_x\Sigma\ra+{{\partial \Sigma}\over{\partial\omega_j}}\Big)(x-s\omega,\omega,E)ds\big)\nonumber\\
&
\cdot e^{-\int_0^t\Sigma(x-s\omega,\omega,E)ds}\cdot
f(x-t\omega,\omega,E)dt\nonumber\\
&
+\int_0^{t(x,\omega)} e^{-\int_0^t\Sigma(x-s\omega,\omega,E)ds} \Big(\la -t\ol\Omega_j(\omega), \nabla_xf\ra+{{\partial f}\over{\partial\omega_j}}\Big)(x-t\omega,\omega,E)dt
\nonumber\\
&
+
e^{-\int_0^{t(x,\omega)}\Sigma(x-s\omega,\omega,E)ds}\ f(x-t(x,\omega)\omega,\omega,E)\ {\p t{\omega_j}}((x,\omega)\nonumber\\
&
=:H_1+H_2+H_3
\eea
where $\ol\Omega_j(\omega)\in T(S)$ are
the tangent vectors at $\omega$. Note that  ${\p {I_{S}}{\omega_j}}_{\big|\omega}=\ol\Omega_j(\omega)$ where  $I_S:S\to S$ is the identity mapping and $\ol\Omega_j\in T_\omega(S)$ are tangent vectors at $\omega\in S$ that is,
\[
&
\ol\Omega_1(\omega)=(-\omega_2,\omega_1,0),\\
&
\ol\Omega_2(\omega)
=\big({{\omega_1\omega_3}\over{\sqrt{\omega_1^2+\omega_2^2}}}, {{\omega_2\omega_3}\over{\sqrt{\omega_1^2+\omega_2^2}}},-\sqrt{1-\omega_3^2}\big)
\]
when we use the parametrization $h$ of $S$ given in the below Example \ref{rbpex1-aa}.

Furthermore, similarly as in (\ref{e-5})
\bea\label{e-5-a}
&
{\p {\psi_2}{\omega_j}}(x,\omega,E)= 
-\Sigma(x-t(x,\omega)\omega,\omega,E){\p t{\omega_j}}(x,\omega)
e^{-\int_0^{t(x,\omega)}\Sigma(x-s\omega,\omega,E)ds} 
\ g(x-t(x,\omega)\omega,\omega,E)\nonumber
\\
&
+\int_0^{t(x,\omega)}\Big(\la -s\ol\Omega_j(\omega), \nabla_x\Sigma\ra+{{\partial \Sigma}\over{\partial\omega_j}}\Big)(x-s\omega,\omega,E)ds\  e^{-\int_0^{t(x,\omega)}\Sigma(x-s\omega,\omega,E)ds}
 \ g(x-t(x,\omega)\omega,\omega,E)\nonumber\\
&
+
e^{-\int_0^{t(x,\omega)}\Sigma(x-s\omega,\omega,E)ds}\la \nabla_{(\partial G)}g(x-t(x,\omega)\omega,\omega,E),{\p t{\omega_j}}(x,\omega)\omega+
t(x,\omega)\ol\Omega_j(\omega)\ra
\nonumber\\
&
+e^{-\int_0^{t(x,\omega)}\Sigma(x-s\omega,\omega,E)ds}{\p g{\omega_j}}(x-t(x,\omega)\omega,\omega,E)
=:Q_1+Q_2+Q_3+Q_4.
\eea

Utilizing similar estimations as in the proof of Theorem \ref{foregth1}
the terms $H_1,\ H_2,\ H_3$ and $Q_1,\ Q_2,\ Q_3,\ Q_4$ can be seen to lie in $L^2(G\times S\times I)$.
Hence  ${\p \psi{\omega_j}}\in L^2(G\times S\times I)$ for $j=1,2$ that is, $\psi\in H^{(0,1,0)}(G\times S\times I^\circ)$. We omit further details.

\end{proof}

\begin{example}\label{rbpex1-aa}
Consider the transport problem of Example \ref{rbpex1-ss}.
Recall that 
\[
{\p t{\omega_1}}_{\Big|\omega}={\partial\over{\partial\phi}}(t\circ h)_{\Big|(\phi,\theta)=h^{-1}(\omega)},\ 
{\p t{\omega_2}}_{\Big|\omega}={\partial\over{\partial\theta}}(t\circ h)_{\Big|(\phi,\theta)=h^{-1}(\omega)}
\]
where $h:]0,2\pi[\times ]0,\pi[\to S$ is the parametrization
\[
h(\phi,\theta)=(R\cos\phi\sin\theta,R\sin\phi\sin\theta,R\cos\theta).
\]
By routine computation we find that
\bea
&
{\p t{\omega_1}}_{\Big|\omega}=
-x_1\omega_2+x_2\omega_1+
{{(-x_1\omega_2+x_2\omega_1)(x\cdot\omega)}\over{\sqrt{(x\cdot\omega)^2+R^2-|x|^2}}}, 
\nonumber\\
&
{\p t{\omega_2}}_{\Big|\omega}=
{{(x\cdot\omega) \omega_3-x_3}\over{\sqrt{\omega_1^2+\omega_2^2}}}
+{{((x\cdot\omega) \omega_3-x_3)(x\cdot\omega)}\over{\sqrt{\omega_1^2+\omega_2^2}
\sqrt{(x\cdot\omega)^2+R^2-|x|^2}}}.
\eea
Hence we have 
\be 
{\p t{\omega_1}}(y+t\omega,\omega)=
-y_1\omega_2+y_2\omega_1+{{-y_1\omega_2+y_2\omega_1}\over{|y\cdot\omega|}}((y\cdot\omega)+t)
\ee
and
\be\label{hreg10-b}
{\p t{\omega_2}}(y+t\omega,\omega)=
{{ (y\cdot\omega) \omega_3-y_3}\over{\sqrt{\omega_1^2+\omega_2^2}}}
+{{(y\cdot\omega) \omega_3-y_3}\over{\sqrt{\omega_1^2+\omega_2^2}
|( y\cdot\omega)|}}((y\cdot\omega)+t).
\ee 
Recalling that in this example $\tau_-(y,\omega)=2|y\cdot\omega|$, $ \nu(y)=y/R$ and noting that on $\Gamma_-$
\[
{{\omega\cdot\nu}\over{|\omega\cdot\nu|}}=-1
\]
we find that on $\Gamma_-$
\[
& 
m_{2,1}(y,\omega)=\int_0^{\tau_-(y,\omega)}\Big|
{\p t{\omega_1}}(y+t\omega,\omega)\Big|^2 dt  \\
&
=\int_0^{2|y\cdot\omega|}\Big(
-y_1\omega_2+y_2\omega_1+{{-y_1\omega_2+y_2\omega_1}\over{|y\cdot\omega|}}((y\cdot\omega)+t)\Big)^2 dt\\
&
=\int_0^{2|y\cdot\omega|}\Big((-y_1\omega_2+y_2\omega_1){t\over{|y\cdot\omega|}}
\Big)^2dt\\
&
={8\over 3}(-y_1\omega_2+y_2\omega_1)^2|y\cdot\omega|
\]
and similarly on $\Gamma_-$
\[
&
m_{2,2}(y,\omega)=\int_0^{\tau_-(y,\omega)}\Big|
{\p t{\tilde\omega_2}}(y+t\omega,\omega)\Big|^2 dt\\
&
=\int_0^{2|y\cdot\omega|}\Big(
{{ (y\cdot\omega) \omega_3-y_3}\over{\sqrt{\omega_1^2+\omega_2^2}}}
+{{(y\cdot\omega) \omega_3-y_3}\over{\sqrt{\omega_1^2+\omega_2^2}
|( y\cdot\omega)|}}((y\cdot\omega)+t)\Big)^2 dt\\
&
={8\over 3}
\Big|{{ (y\cdot\omega) \omega_3-y_3}\over{\sqrt{\omega_1^2+\omega_2^2}}}\Big|^2
|y\cdot\omega|.
\]

Since
\bea
&
\Big|
{{(y\cdot\omega) \omega_3-y_3}\over{\sqrt{\omega_1^2+\omega_2^2}}}\Big|^2=
\Big|
{{y_1\omega_1\omega_3+y_2\omega_2\omega_3+y_3\omega_3^2-y_3}\over{\sqrt{\omega_1^2+\omega_2^2}}}
\Big|^2\nonumber\\
&
=
\Big|
{{y_1\omega_1\omega_3+y_2\omega_2\omega_3+y_3(1-\omega_1^2-\omega_2^2)-y_3}\over{\sqrt{\omega_1^2+\omega_2^2}}}
\Big|^2
=\Big|
{{y_1\omega_1\omega_3+y_2\omega_2\omega_3-y_3(\omega_1^2+\omega_2^2)}\over{\sqrt{\omega_1^2+\omega_2^2}}}
\Big|^2
\nonumber\\
&
\leq (3R)^2
\eea
we see that on $\Gamma_-$
\be\label{ub} 
m_{2,1}(y,\omega)\leq {8\over 3}R^2|y\cdot\omega|,\ m_{2,2}(y,\omega)\leq {72\over 3}R^2|y\cdot\omega|
\ee
and so, for example
\be\label{ss}
T^2(\Gamma_-)\subset T^2_{m_{2,j}}(\Gamma_-).
\ee
Hence we are able to formulate  a regularity result analogous to Example \ref{rbpex1-ss} by replacing $L^2(\Gamma_-)$ therein (more loosely) with $T^2(\Gamma_-)$.
\end{example}

\subsubsection{Regularity with respect to energy variable}\label{rev}

The regularity with respect to energy variable does not need any assumptions on the escape time mapping $t(x,\omega)$ and we obtain results for a general Sobolev index $m_3\in\N$. The solution $\psi$ may be as regular as data with respect to $E$-variable.

\begin{theorem}\label{foregth3}
Let $m_3\in\N$. 
Assume that  
\be\label{ashr4-ea}
\Sigma\in W^{\infty,(0,0,m_3)}(G\times S\times I^\circ),\ \Sigma\geq c>0,
\ee
\be\label{ashr2-ea}
f\in H^{(0,0,m_3)}(G\times S\times I^\circ),
\ee
\be\label{ashr1-ea}
g, \ {\p { g}E},\ ..., {{\partial^{m_3} g}\over{\partial E^{m_3}}}\in T^2(\Gamma_-).
\ee
Then the solution $\psi$ of the problem (\ref{stif1}) belongs to
$H^{(0,0,m_3)}(G\times S\times I^\circ)=H^{m_3}(I^\circ,L^2(G\times S))$.
\end{theorem}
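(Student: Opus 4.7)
The plan is to differentiate the explicit representation (\ref{stif2}) up to order $m_3$ in $E$, exploiting the fact that the escape time $t(x,\omega)$ is independent of $E$. Writing $\psi=\psi_1+\psi_2$ as in the proofs of Theorems \ref{foregth1} and \ref{foregth2}, I observe that applying $\partial_E^k$ does \emph{not} produce any boundary term or derivative of $t(x,\omega)$; it simply distributes across $\Sigma$, $f$, and $g$ via the Leibniz rule combined with Fa\`a di Bruno applied to the exponential factor $e^{-\int_0^{(\cdot)}\Sigma\,ds}$. This structural simplification is precisely why no weight functions analogous to $m_{1,j}$ or $m_{2,j}$ appear and why arbitrary $m_3\in\N$ is attainable.

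Concretely, for each $k\le m_3$, $\partial_E^k\psi_1$ is a finite sum of terms of the form
\[
\int_0^{t(x,\omega)} P_k(x,\omega,E,t)\, e^{-\int_0^t \Sigma(x-s\omega,\omega,E)\,ds}\,\partial_E^\ell f(x-t\omega,\omega,E)\, dt,
\]
where $0\le \ell\le k$ and $P_k$ is a polynomial in the iterated integrals $\int_0^t \partial_E^j\Sigma(x-s\omega,\omega,E)\,ds$, $1\le j\le k$. Because $\Sigma\ge 0$ keeps the exponential bounded by $1$ and (\ref{ashr4-ea}) bounds each $\partial_E^j\Sigma$ in $L^\infty$, every such $P_k$ is uniformly bounded on $G\times S\times I\times[0,d]$ by a constant depending only on $d={\rm diam}(G)$ and the $W^{\infty,(0,0,m_3)}$-norm of $\Sigma$. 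An analogous expansion holds for $\partial_E^k\psi_2$, with $\int_0^t(\cdots)ds$ replaced by $\int_0^{t(x,\omega)}(\cdots)ds$ and $f(x-t\omega,\cdot,\cdot)$ replaced by $\partial_E^\ell g(x-t(x,\omega)\omega,\omega,E)$.

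Bounding each term in $L^2(G\times S\times I)$ then proceeds exactly as in parts A and B of the proof of Theorem \ref{foregth1}. For the $\psi_1$-type terms I apply Cauchy--Schwarz on $\int_0^{t(x,\omega)}(\cdots)dt$, extend $\partial_E^\ell f$ by zero to $\R^3\times S\times I$, and change variables $z=x-t\omega$; this yields a bound proportional to $\|\partial_E^\ell f\|_{L^2(G\times S\times I)}\le \|f\|_{H^{(0,0,m_3)}(G\times S\times I^\circ)}$, controlled by (\ref{ashr2-ea}). For the $\psi_2$-type terms I invoke Lemma \ref{pr:fubini} together with the identity $t(y+t\omega,\omega)=t$, which collapses $\partial_E^\ell g(x-t(x,\omega)\omega,\omega,E)$ into $\partial_E^\ell g(y,\omega,E)$ along characteristics and produces a factor $\tau_-(y,\omega)\le d$; this gives a bound proportional to $\|\partial_E^\ell g\|_{T^2(\Gamma_-)}$, controlled by (\ref{ashr1-ea}). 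Summing over the finitely many terms in the Fa\`a di Bruno expansion and over $k=0,1,\ldots,m_3$ shows $\psi\in H^{(0,0,m_3)}(G\times S\times I^\circ)$.

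The main (and only) obstacle is bookkeeping the combinatorial coefficients produced by Fa\`a di Bruno, but no new analytical issue arises. Crucially, the derivatives of $t(x,\omega)$ — which produced the weights $m_{1,j}$ and $m_{2,j}$ and capped the order of regularity in Theorems \ref{foregth1} and \ref{foregth2} — are absent here, so $m_3$ can be taken arbitrary and the regularity of $t(x,\omega)$ is irrelevant.
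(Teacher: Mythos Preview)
Your proposal is correct and follows essentially the same approach as the paper: decompose $\psi=\psi_1+\psi_2$ via the explicit formula, observe that $\partial_E$ never hits $t(x,\omega)$, expand using Leibniz (the paper) / Leibniz plus Fa\`a di Bruno (your version) to see that the $E$-derivatives of the exponential factor lie in $L^\infty$ under (\ref{ashr4-ea}), and then bound each resulting term in $L^2$ exactly as in the proof of Theorem~\ref{foregth1}. The only difference is cosmetic---you name Fa\`a di Bruno explicitly where the paper writes ``iteratively''---and your emphasis on why no weight functions appear (so that $m_3$ is unrestricted) is precisely the point the paper is making.
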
 

\begin{proof}
We only outline the proof since the technicalities are again similar to section \ref{foreg}.
Denote as above
\[
\psi_1:=\int_0^{t(x,\omega)} e^{-\int_0^t\Sigma(x-s\omega,\omega,E)ds}f(x-t\omega,\omega,E)dt,
\]
\[
\psi_2:=
e^{-\int_0^{t(x,\omega)}\Sigma(x-s\omega,\omega,E)ds}g(x-t(x,\omega)\omega,\omega,E).
\]
Then by (\ref{stif2}) $\psi=\psi_1+\psi_2$. 

At first consider the assertion for $m_3=1$.
We have
\bea\label{hreg2-e}
{\p {\psi_1}{E}}
&=\int_0^{t(x,\omega)}\big(-\int_0^t{\p \Sigma{E}}(x-s\omega,\omega,E)ds\big)
\cdot e^{-\int_0^t\Sigma(x-s\omega,\omega,E)ds}\cdot
f(x-t\omega,\omega,E)dt\nonumber\\
&
+\int_0^{t(x,\omega)} e^{-\int_0^t\Sigma(x-s\omega,\omega,E)ds} {\p {f}{E}}(x-t\omega,\omega,E)dt
=:U_1+U_2
\eea
and
\bea\label{e-5-e}
&
{\p {\psi_2}{E}}(x,\omega,E)= 
-\int_0^{t(x,\omega)}{\p \Sigma{E}}(x-s\omega,\omega,E)ds\ 
e^{-\int_0^{t(x,\omega)}\Sigma(x-s\omega,\omega,E)ds} \ g(x-t(x,\omega)\omega,\omega,E)\nonumber\\
&
+e^{-\int_0^{t(x,\omega)}\Sigma(x-s\omega,\omega,E)ds}{\p g{E}}(x-t(x,\omega)\omega,\omega,E)
 \nonumber\\
&
=:P_1+P_2.
\eea
Applying the same kind of estimations as in section \ref{foreg} the terms
$U_1,\ U_2,\ P_1,\ P_2$ can be shown to belong to $L^2(G\times S\times I)$. and so
$\psi\in H^{(0,0,1)}(G\times S\times I^\circ)$. This proves the claim for $m_3=1$.

In the general case we have by the Leibniz's formula
\bea\label{gen-f} 
&
\partial_E^{m_3}\psi=\int_0^{t(x,\omega)}\sum_{k=0}^{m_3}{{m_3}\choose k}\partial_E^k\Big(
e^{-\int_0^{t}\Sigma(x-s\omega,\omega,E)ds} \Big) \cdot (\partial_E^{m_3-k}f)(x-t(x,\omega)\omega,\omega,E)\nonumber\\
&
+
\sum_{k=0}^{m_3}{{m_3}\choose k}\partial_E^k\Big(
e^{-\int_0^{t(x,\omega)}\Sigma(x-s\omega,\omega,E)ds} \Big) \cdot (\partial_E^{m_3-k}g)(x-t(x,\omega)\omega,\omega,E)
\eea
We see iteratively by the assumption (\ref{ashr4-ea}) that 
\[
\partial_E^k\Big(
e^{-\int_0^{t}\Sigma(x-s\omega,\omega,E)ds} \Big),\  
\partial_E^k\Big(
e^{-\int_0^{t(x,\omega)}\Sigma(x-s\omega,\omega,E)ds} \Big)\in L^\infty(G\times S\times I),\ 0\leq k\leq m_3
\]
and then   we are able to conclude the claim by using analogous estimations as in section \ref{foreg}. We omit further details. .

\end{proof}

\begin{example}\label{rbpex1-e}
The solution $\psi$ of the transport problem considered in Example \ref{rbpex1-ss}
belongs to $H^{(0,0,m_3)}(G\times S\times I^\circ)$ when (\ref{ashr4-ea}), (\ref{ashr2-ea}), (\ref{ashr1-ea}) hold.
\end{example}

We give finally the following corollary

\begin{corollary}\label{rbpco1}
Suppose that the assumptions of Theorems \ref{foregth1}, \ref{foregth2} and \ref{foregth3} are valid (with $m_3=1$). Then the solution of the transport problem
\be\label{++}
\omega\cdot\nabla_x\psi+\Sigma\psi=f,\ \psi_{|\Gamma-}=g
\ee
belongs to $H^1(G\times S\times I^\circ)$ where  $H^1(G\times S\times I^\circ)$ denotes the usual first order Sobolev space.
\end{corollary}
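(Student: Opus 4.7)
The plan is to read off $H^1$-regularity as the intersection of the three anisotropic regularity statements already established. Recall that membership in the usual first order Sobolev space $H^1(G\times S\times I^\circ)$ is equivalent to $\psi\in L^2(G\times S\times I)$ together with $\partial_{x_j}\psi\in L^2$ for $j=1,2,3$, $\partial_{\omega_k}\psi\in L^2$ for $k=1,2$, and $\partial_E\psi\in L^2$. In the notation of the anisotropic scale this is exactly the intersection
\[
H^1(G\times S\times I^\circ)=H^{(1,0,0)}(G\times S\times I^\circ)\cap H^{(0,1,0)}(G\times S\times I^\circ)\cap H^{(0,0,1)}(G\times S\times I^\circ).
\]

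First I would invoke Theorem \ref{cor-csdath3} (or more directly the explicit formula \eqref{stif2}) to know that the solution $\psi$ of \eqref{++} exists and lies in $L^2(G\times S\times I)$; the explicit representation is what all three of the preceding theorems use. Then I would apply each regularity theorem in turn: Theorem \ref{foregth1} under assumptions \eqref{ashr4}--\eqref{ashr2-b} gives $\partial_{x_j}\psi\in L^2$ for $j=1,2,3$; Theorem \ref{foregth2} under assumptions \eqref{om-1}--\eqref{om-5} gives $\partial_{\omega_k}\psi\in L^2$ for $k=1,2$; and Theorem \ref{foregth3} with $m_3=1$ under \eqref{ashr4-ea}--\eqref{ashr1-ea} gives $\partial_E\psi\in L^2$. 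Combining these with $\psi\in L^2$ yields $\psi\in H^1(G\times S\times I^\circ)$.

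The only subtlety worth flagging is the compatibility of hypotheses: the three theorems impose conditions on $\Sigma$, $f$ and $g$ that are not identical, but the assumption ``the assumptions of Theorems \ref{foregth1}, \ref{foregth2} and \ref{foregth3} are valid'' asks precisely for their conjunction, namely $\Sigma\in W^{\infty,(1,0,0)}\cap W^{\infty,(0,1,0)}\cap W^{\infty,(0,0,1)}$ with $\Sigma\geq c>0$, $f\in H^{(1,0,0)}\cap H^{(0,1,0)}\cap H^{(0,0,1)}$ with the corresponding traces of $f$ in the weighted inflow spaces, and $g$ together with $\nabla_{(\partial G)}g$, $\partial_{\omega_j}g$, $\partial_E g$ in the appropriate $T^2$ and weighted $T^2_{m_{i,j}}$ spaces. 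Under these combined hypotheses the three conclusions apply simultaneously to the same $\psi$ (uniqueness from Theorem \ref{cor-csdath3} ensures we are talking about the same solution), so no step is genuinely difficult; the only ``obstacle'' is bookkeeping of the hypotheses, which is immediate once one observes that $H^1$ decomposes as the intersection of the three one-directional anisotropic spaces.
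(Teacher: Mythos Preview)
Your proposal is correct and matches the paper's own proof exactly: the paper simply observes the identity $H^1(G\times S\times I^\circ)=H^{(1,0,0)}\cap H^{(0,1,0)}\cap H^{(0,0,1)}$ and invokes Theorems \ref{foregth1}, \ref{foregth2} and \ref{foregth3}. Your additional remarks on existence, uniqueness, and bookkeeping of the combined hypotheses are accurate but go beyond what the paper itself spells out.
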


\begin{proof} Since 
\[
H^{(1,0,0)}(G\times S\times I^\circ)\cap H^{(0,1,0)}(G\times S\times I^\circ)\cap H^{(0,0,1)}(G\times S\times I^\circ)
=H^1(G\times S\times I^\circ)
\]
the claim follows immediately from Theorems \ref{foregth1}, \ref{foregth2} and \ref{foregth3}.

\end{proof}

\subsection{On regularity of solutions for the differential part of continuous slowing down equation}\label{csda-ex}

In certain cases the explicit formulas are also available for the equations corresponding to the differential part of the CSDA-Boltzmann equation. In the sequel we shall delineate one such situation.

Suppose that $\Sigma(x,\omega,E)=\Sigma(x,\omega)$ (i.e. $\Sigma$ does not depend on $E$), $\Sigma\in L^\infty(G\times S)$, $\Sigma\geq c>0$.
Furthermore, suppose that $a(x,E)=a(E)$ (i.e. $a$ is independent of $x$ and $\omega$) and that $a:I\to \R_+$ is a continuous, strictly positive function. Finally, suppose that $E_0=0$. Let $f\in H^1(I,L^2(G\times S))$, $g\in H^1(I,T^2(\Gamma'_-))$
such that $g(E_{\rm m})=0$ (the compatibility condition).
Define $R:I\to\R$ by 
\be 
R(E):=\int_0^E{1\over{a(\tau)}}d\tau.
\ee
Let $r_{m}:=R(E_m)$. Then $R:I\to [0,r_{m}]$ is a continuously differentiable and strictly increasing bijection. Let $R^{-1}: [0,r_{m}]\to I$ be its inverse.
We denote the argument of $R^{-1}$ on $[0,r_{m}]$ by $\eta$,
i.e. $E=R^{-1}(\eta)$ (or equivalently $\eta=R(E)$).

Consider the  problem of the form,
\be\label{desol19-c}
-{\p {(a\psi)}E}+\omega\cdot\nabla_x\psi+
\Sigma\psi
= f,\ \
\psi_{|\Gamma_-}=g,\quad 
\psi(\cdot,\cdot,E_m)=0.  
\ee
Let $H:\R\to\R$ be the Heaviside function $H(x)=\begin{cases} 1,\ &x\geq 1\\ 0,\ &x<1\end{cases}$.
In \cite{tervo18-up}, section 10 
(see also \cite[pp. 226-235]{dautraylionsv6}, \cite[section 7]{tervo17-up}) 
we verified that the solution $\psi$ is
\[
\psi=\psi_1+\psi_2
\]
where 
\bea\label{ts-1}
&
\psi_1(x,\omega,E):=
{1\over{a(E)}}\Big[
\int_0^{r_{m}-R(E)}
e^{-\int_0^{r_{m}-R(E)-s} \Sigma(x-\tau\omega,\omega) d\tau}
\nonumber\\
&
\cdot
H(t(x,\omega)-(r_{m}-R(E)-s))a(R^{-1}(r_m-s)) f(x-(r_{m}-R(E)-s)\omega,\omega,R^{-1}(r_{m}-s)) ds\Big]\nonumber\\
&
=
{1\over{a(E)}}\Big[
\int_0^{\min\{r_{m}-R(E),t(x,\omega)\}}
e^{-\int_0^{s} \Sigma(x-\tau\omega,\omega) d\tau}
\nonumber\\
&
\cdot
a(R^{-1}(R(E)+s)) f(x-s\omega,\omega,R^{-1}(R(E)+s)) ds\Big],
\eea
and 
\bea\label{ts-2}
&
\psi_2(x,\omega,E):=
{1\over{a(E)}}H(r_{m}-R(E)-t(x,\omega))e^{\int_0^{t(x,\omega)}-\Sigma(x-s\omega,\omega)ds}
\nonumber\\
&
\cdot a(R^{-1}(R(E)+t(x,\omega)))
{g}(x-t(x,\omega)\omega,\omega,R^{-1}(R(E)+t(x,\omega)))
.
\eea
In the second step of (\ref{ts-1}) we applied the change of variables $r_m-R(E)-s=s'$ and noticed that
\[
&
-\int_{r_m-R(E)}^0H(t(x,\omega)-s')
e^{-\int_0^{s'} \Sigma(x-\tau\omega,\omega) d\tau}
\cdot
a(R^{-1}(R(E)+s')) f(x-s'\omega,\omega,R^{-1}(R(E)+s')) ds\\
=
&
\int_0^{\min\{r_{m}-R(E),t(x,\omega)\}}
e^{-\int_0^{s} \Sigma(x-\tau\omega,\omega) d\tau}
\cdot
a(R^{-1}(R(E)+s)) f(x-s\omega,\omega,R^{-1}(R(E)+s)) ds.
\]

As well-known in the case of initial boundary value problems in order
to retrieve relevant regularity results (with respect to evolving variable) one must impose certain {\it compatibility conditions for the data} (see e.g. \cite{rauch74}). In our case these conditions mean the following (heuristic) requirements.

The zero order compatibility condition is as follows. 
The problem  (\ref{desol19-c}) is
\be\label{csda-ex6}
-a{\p {\psi}E}+\omega\cdot\nabla_x\psi+
\Sigma\psi-{\p a{E}}\psi
= f,\ 
\psi_{|\Gamma_-}=g,\quad 
\psi(\cdot,\cdot,E_m)=0.
\ee
In order that $\psi$ is continuous up to the boundary the conditions $\psi_{|\Gamma_-}=g,\quad 
\psi(\cdot,\cdot,E_m)=0$ imply that
\be\label{comp-1}
g(E_m)=g(.,.,E_m)=0.
\ee

Next consider the first order compatibility condition. In the case where the data and the solution are regular enough the validity of the equations (\ref{csda-ex6}) implies that (here we denote $g(E)=g(y,\omega,E)$ and so on)
\be\label{csda-ex7}
-a(E_m){\p {g}E}(E_m)+(\omega\cdot\nabla_xg)(E_m)+
\Sigma(E_m)_{|\Gamma_-}g(E_m)-{\p a{E}}(E_m)_{|\Gamma_-}g(E_m)=f(E_m)_{|\Gamma_-}
\ee
which by (\ref{comp-1}) reduces to
\be \label{csda-ex9}
-a(E_m){\p {g}E}(E_m)+(\omega\cdot\nabla_xg)(E_m)
=f(E_m)_{|\Gamma_-}.
\ee
In the case  where $ g\in C^1(I,W^2(G\times S))$ the condition (\ref{comp-1}) implies that $(\omega\cdot\nabla_xg)(E_m)=0$ and so (\ref{csda-ex9}) reduces to
\be \label{csda-ex9-a}
-a(E_m){\p {g}E}(E_m)
=f(E_m)_{|\Gamma_-}.
\ee
 
Analogously demanding higher order regularity we need to impose higher order compatibility conditions (we omit formulations of the general order compatibility conditions which could be done by applying the Leibniz's rule).

Results similar to section \ref{reg-up-to} can be deduced for the problem (\ref{desol19-c})
by using the solution formulas  (\ref{ts-1}), (\ref{ts-2}). 
We omit  detailed  computations and contempt ourselves only to conjecture  the following regularity result:

Assume  that   $a=a(E)$ is independent of $x$ and $\Sigma=\Sigma(x,\omega)$ is independent of $E$. 
Suppose as in Corollary \ref{rbpco1} that the assumptions of Theorems \ref{foregth1}, \ref{foregth2} and \ref{foregth3} are valid (with $m_3=1$). 
Furthermore, suppose that
\be\label{infs0}
a\in W^{\infty,1}(I),\ a(E)\geq \kappa>0
\ee
and
\be\label{comp-bd1}
 g(E_m)=0.
\ee
Then the solution $\psi$ of the transport problem  (\ref{desol19-c})
belongs to $H^1(G\times S\times I^\circ)$.

\begin{proof}
Using the explicit formulas (\ref{ts-1}), (\ref{ts-2}) the proof follows by the analogous computations as in sections \ref{foreg}, \ref{foregan} and \ref{rev}. 
Some details in a special case for the first term (\ref{ts-1}) will be given in section \ref{csda-special}
(especially Lemma \ref{sc-le1} therein). Considerations of $H^1$-differentiability on the surface $r_m-R(E)=t(x,\omega)$  require special attention as treated in the beginning of section  \ref{csda-special}. The second term (\ref{ts-2}) is somewhat more subtle to handle and we note that in its treatise it is essential that for $E_{m}-E-t(x,\omega)$ one must have ${g}(x-t(x,\omega)\omega,\omega,E+t(x,\omega))=0$ which means that the compatibility condition (\ref{comp-bd1}) holds.
We omit the detailed proof.
\end{proof}

In the case where $a=1$ and $\Sigma$ is  constant the solution is
\[
&
\psi(x,\omega,E)=\int_0^{\min\{E_m-E,t(x,\omega)\}} e^{-\Sigma s} f\big(x-s\omega,\omega,E+s\big)ds\\
+
&
H(E_{m}-E-t(x,\omega))e^{-\Sigma t(x,\omega)}
\cdot 
{g}(x-t(x,\omega)\omega,\omega,E+t(x,\omega))
\]
from which it is more easier to conclude the above claim.

\begin{remark}\label{re-comp1}
If we assure that 
\be\label{more-reg}
\psi\in C^1(I,H^1(G\times S))
\ee
instead of $\psi\in H^1(G\times S\times I^\circ)$ then the additional compatibility condition (\ref{csda-ex9-a}) is necessary (see Remark \ref{re-comp2} below). Sufficient 
criteria for the regularity (\ref{more-reg}) may be retrieved by applying the theory of evolution equations (cf. \cite{tervo17}, Theorem 3.6).
\end{remark}

\begin{remark}\label{frac}
  
The previous considerations can be generalized for results formulated in \emph{fractional anisotropic Sobolev-Slobodevskij spaces.}
For example, for $0<s<1$ the space $H^{(1+s,0,0)}(G\times S\times I^\circ)$ is the subspace of 
$H^{(1,0,0)}(G\times S\times I^\circ)$ for which
\be\label{fr-b}
\int_G\int_{G'}\int_S\int_I{{|{\p \psi{x_j}}(x,\omega,E)-{\p \psi{x_j}}(x',\omega,E)|^2}\over{|x-x'|^{3+2s}}} dE d\omega dx' dx<\infty.
\ee
For instance, the solution of the problem (\ref{stif1})
is $\psi=\psi_1+\psi_2$ for which by the proof of Theorem \ref{foregth1}
\bea\label{hreg2-aa}
{\p {\psi_1}{x_j}}
&=\int_0^{t(x,\omega)}\big(-\int_0^t{\p \Sigma{x_j}}(x-s\omega,\omega,E)ds\big)
\cdot e^{-\int_0^t\Sigma(x-s\omega,\omega,E)ds}\cdot
f(x-t\omega,\omega,E)dt\nonumber\\
&
+\int_0^{t(x,\omega)} e^{-\int_0^t\Sigma(x-s\omega,\omega,E)ds} {\p {f}{x_j}}(x-t\omega,\omega,E)dt
\nonumber\\
&
+
e^{-\int_0^{t(x,\omega)}\Sigma(x-s\omega,\omega,E)ds}\ f(x-t(x,\omega)\omega,\omega,E)\ {\p t{x_j}}((x,\omega)\nonumber\\
&
=:h_1+h_2+h_3
\eea
and 
\bea\label{e-5-aa}
&
{\p {\psi_2}{x_j}}(x,\omega,E)= 
-\Sigma(x-t(x,\omega)\omega,\omega,E){\p t{x_j}}(x,\omega)e^{-\int_0^{t(x,\omega)}\Sigma(x-s\omega,\omega,E)ds}
\ g(x-t(x,\omega)\omega,\omega,E)
\\
&
-\int_0^{t(x,\omega)}{\p \Sigma{x_j}}(x-s\omega,\omega,E)ds\ 
e^{-\int_0^{t(x,\omega)}\Sigma(x-s\omega,\omega,E)ds} \ g(x-t(x,\omega)\omega,\omega,E)\nonumber\\
&
+
e^{-\int_0^{t(x,\omega)}\Sigma(x-s\omega,\omega,E)ds}\la \nabla_{\partial G}g(x-t(x,\omega)\omega,\omega,E),e_j-{\p t{x_j}}(x,\omega)\omega\ra
\nonumber\\
&
=:q_1+q_2+q_3.
\eea
Adding and subtracting reasonably the terms  it can be shown that under relevant assumptions for $j=1,2,3$ and for some $s>0$
\be\label{fr-c}
\int_G\int_{G'}\int_S\int_I{{|h_j(x,\omega,E)-h_j(x',\omega,E)|^2}\over{|x-x'|^{3+2s}}} dE d\omega dx' dx<\infty,
\ee 
\be\label{fr-d}
\int_G\int_{G'}\int_S\int_I{{|q_j(x,\omega,E)-q_j(x',\omega,E)|^2}\over{|x-x'|^{3+2s}}} dE d\omega dx' dx<\infty.
\ee 
We omit these generalizations here. However, we remark that the index $s$ is limited to $s<1/2$ such as the next example suggests. 
\end{remark}

The following example given in \cite[Example 7.4]{tervo17-up}   shows that in the case of transport problems, the regularity
of the solution
{\it does not generally} arise from the regularity of data and cross-sections in the sense that "the solution is more and more regular on the whole domain $G\times S\times I$ as the data and cross-sections are more and more regular".

\begin{example}\label{exreg}
 
Let $G=B(0,R)\subset \R^3$ and consider the problem 
\[
\omega\cdot\nabla\psi+\psi&=1, \\[2mm]
\psi_{|\Gamma_-}&=0.
\]
By (\ref{stif2}) the solution of the problem is
\[
\psi=1-e^{-t(x,\omega)},
\]
where for $(x,\omega)\in G\times S$,
\[
t(x,\omega)=x\cdot \omega +\sqrt{(x\cdot \omega)^2+R^2-|x|^2}.
\]
In the present example (related to equations like in section \ref{reg-up-to}),
$\Sigma=1$, $K_r=0$, $f=1$ and $g=0$ and so the date is smooth (even $C^\infty$).

We showed in \cite[Example 7.5]{tervo17-up} that $\psi\in H^{(1,0,0)}(G\times S\times I^\circ)$, but
$\psi\not\in H^{(2,0,0)}(G\times S\times I^\circ)$. In \cite[Example 7.6]{tervo17-up} we conjectured that actually $\psi\not\in H^{(3/2,0,0)}(G\times S\times I^\circ)$.
Hence only the limited regularity can be achieved up to the boundary.
We notice, however  that $\psi\in C^\infty(G\times S\times I^\circ)$ since $R^2-|x|^2>0$ for $x\in G$ and so the interior regularity of any order may be possible.

\end{example}

\section{On  the  regularity with respect to spatial variable for a complete equation (in a special case)}\label{w-x-reg}

Consider
the  transport problem of the form 
\bea
&
T\psi:=-{\p {(a\psi)}{E}}+c\Delta_S\psi+d\cdot\nabla_S\psi +\omega\cdot\nabla_x\psi+\Sigma\psi-K_r\psi=f\ {\rm in}\ G\times S\times I\label{wxr-1}\\
&
\psi_{|\Gamma_-}=g, \ \psi(.,.,E_m)=0\label{wxr-1a}
\eea
where $a=a(x,E),\ c=c(x,E),\ \Sigma=\Sigma(x,\omega,E),\ d=d(x,\omega,E)$ and
where $f\in L^2(G\times S\times I)$ and $g\in T^2(\Gamma_-)$. Note that any equation of the form
\[
a{\p {\psi}{E}}+c\Delta_S\psi+d\cdot\nabla_S\psi+\omega\cdot\nabla_x\psi+\Sigma\psi-K_r\psi=f
\]
can expressed in the form (\ref{wxr-1}) since $a{\p\psi{E}}=-{\p {(-a\psi)}{E}}-{\p a{E}}\psi$.
In the sequel we shall  deal with the  regularity with respect to $x$-variable only.   Moreover, for simplicity \emph{we assume in the sequel that  $c=d=0$}. 

\subsection{A Solution Based on Neumann Series}\label{meta}

The application of Neumann series in theoretical and numerical treatises of transport problems has long traditions (cf. e.g. \cite{case63}). 
Neumann series can  be used to prove existence, uniqueness and positivity of solutions for some transport equations  
but they give also a potential method to prove regularity of solutions. We shall below consider some techniques for regularity founded on  Neumann series.

Consider the transport problem (\ref{wxr-1}), (\ref{wxr-1a}) with $c=d=0$.
Setting $\phi=e^{CE}\psi$ as in \cite{tervo17}, section 3.1, we see that the problem takes an equivalent form 
\be\label{cosyst1}
-{\p {(a\phi)}E}+\omega\cdot\nabla_x\phi+Ca\phi+\Sigma\phi
-{K}_{r,C}\phi={ f_C},\
{\phi}_{|\Gamma_-}={ g_C},\
\phi(\cdot,\cdot,E_m)=0
\ee
where
\[
{ f_C}=e^{CE} f,\quad
{ g_C}=e^{CE} g
\]
and where
the restricted collision operator $K_{r,C}$  is given by
\be 
K_{r,C}\phi:=e^{CE}K_r(e^{-CE'}\phi).
\ee
 
Define
\be\label{cmp4} 
T_{C}\phi:=-{\p {(a\phi)}E}+\omega\cdot\nabla_x\phi+
Ca\phi+\Sigma\phi
- K_{r,C}\phi
\ee
The operator  $T_C:L^2(G\times S\times I)\to L^2(G\times S\times I)$ 
is  a (densily defined) closed linear operator when we set its domain as 
\be 
D(T_C):={}&\{\phi\in L^2(G\times S\times I)\ |\ T_{C}\phi\in L^2(G\times S\times I)\}
.
\ee
Using this notation
the problem (\ref{cosyst1}) can be expressed equivalently as
\be\label{comp5}
T_C\phi={f_C},\quad \phi_{|\Gamma_-}={ g_C},\quad \phi(\cdot,\cdot,E_m)=0.
\ee

Recall that the lift $L_-g$ of $g$ can be given by
\[
(L_-{ g})(x,\omega,E)={ g}(x-t(x,\omega)\omega,\omega,E).
\]
Recall also that (\cite[Lemma 6.11]{tervo17-up}, \cite[section 3.4.2]{tervo17})
\be\label{regL}
L_-{ g}\in H^1(I,\widetilde W^2(G\times S)) \ {\rm for}\ g\in H^1(I,T^2(\Gamma_-')),
\ee
and that it satisfies 
\be\label{ocdn}
\omega\cdot\nabla_x (L_-{ g})=0,\quad
(L_-{ g})_{|\Gamma_-}={g}.
\ee
Furthermore, the compatibility condition $g(\cdot,\cdot,E_{\rm m})=0$ implies that 
\be\label{comp7}
(L_-{g})(\cdot,\cdot,E_{\rm m})=0.
\ee

Denote the differential part of $T_C$ by
\be\label{comp8}
P_C\phi:=P_{C}(x,\omega,E,D)\phi:={}&-{\p {(a\phi)}E}+\omega\cdot\nabla_x\phi+
Ca\phi+\Sigma\phi.
\ee
Then
\[
T_C= P_C-K_{r,C}. 
\]
Let ${ P}_{C,0}$ be the densely defined linear operator
acting in $L^2(G\times S\times I)$ such that  
\bea
D({ P}_{C,0}):={}&\{u\in \widetilde W^2(G\times S\times I)\cap H^1(I,L^2(G\times S))\ |\nonumber\\
{}&\hspace{2mm}
u_{|\Gamma_-}=0,\ u(\cdot,\cdot,E_{\rm m})=0\},\nonumber\\
{ P}_{C,0}u:={}&P_C(x,\omega,E,D)u.
\eea
Furthermore, let $\widetilde{P}_{C,0}:L^2(G\times S\times I)\to L^2(G\times S\times I)$ be the smallest closed extension of ${ P}_{C,0}$. 
Writing $u:=\phi-L_-{ g_C}$,
we see (by (\ref{ocdn}),  (\ref{comp7})) that $\phi=u+L_-{ g_C}$ is a solution of (\ref{comp5}) if and only if
\be\label{comp10}
( P_{C}-K_{r,C})(u+L_-{g})={f_C},\quad u_{|\Gamma_-}=0,\quad u(\cdot,\cdot,E_m)=0.
\ee
When $u\in D(\widetilde{P}_{C,0})$ the problem (\ref{comp10}) is equivalent to
\be\label{comp11}
\widetilde{ P}_{C,0}u-K_{r,C}u={ f_C}-( P_C-K_{r,C})(L_-{ g_C})=:\ol f_C.
\ee

This kind of decomposition $\phi=u+L_-g_C$ of the solution is proved under certain assumptions in \cite[Theorem 3.11]{tervo17} (see also \cite{tervo18}). We give  somewhat more general formulation of it.

\begin{theorem}\label{coth3-dd}
Suppose that the assumptions 
(\ref{ass-for-s}),  (\ref{ass5-a}), (\ref{ass7}),  (\ref{ass-8}),
(imposed for $\Sigma$ and $K_r$ in section \ref{res-coll}) are valid and that
\be\label{KC-coer}
\la K_{r,C}\phi,\phi\ra_{L^2(G\times S\times I)}\geq c\n{\phi}_{L^2(G\times S\times I)}^2, \ {\rm for\ all}\ \phi\in L^2(G\times S\times I)
\ee
where $c>0$.
Furthermore, assume that
\be\label{evo16}
a\in C^2(I,L^\infty(G)),
\ee
\be\label{evo8-a}
a(x,E)\geq\kappa>0, \ {\rm a.e.}
\ee
\be\label{evo9-a}
\nabla_x a\in  L^\infty(G\times I)
\ee
and
\be\label{flp1-ab}
-{\p {a}E}+2Ca\geq 0 \ {\rm a.e.}\ .
\ee
Suppose that $f_C\in L^2(G\times S\times I)$, and $ g_C\in H^1(I,T^2(\Gamma_-'))$
is such that the \emph{compatibility condition} (on $\Gamma_-'$)
\be\label{comp-dd}
g_C(\cdot,\cdot,E_{\rm m})=0
\ee
holds.
Then the problem
\begin{gather}
-{\p {(a\phi)}E}+\omega\cdot\nabla_x\phi+Ca\phi+\Sigma\phi -K_{r,C}\phi=f_C,\ \nonumber\\
{\phi}_{|\Gamma_-}= g_C,\quad \phi(\cdot,\cdot,E_{\rm m})=0, \label{co3aa-dd}
\end{gather}
has a unique solution 
\be\label{solin}
\phi=u+L_-g_C
\ee
where $u\in  D(\widetilde P_{C,0})$.
\end{theorem}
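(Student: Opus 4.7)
The plan is to reduce the inhomogeneous boundary/initial value problem \eqref{co3aa-dd} to one with homogeneous data via the lift substitution $\phi=u+L_-g_C$, and then invert the perturbed operator $\widetilde P_{C,0}-K_{r,C}$ on $D(\widetilde P_{C,0})$.

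First I check that the lift has enough regularity. Since $g_C\in H^1(I,T^2(\Gamma_-'))$, property \eqref{regL} yields $L_-g_C\in H^1(I,\widetilde W^2(G\times S))$; \eqref{ocdn} gives $\omega\cdot\nabla_x(L_-g_C)=0$; and \eqref{comp-dd} together with \eqref{comp7} gives $(L_-g_C)(\cdot,\cdot,E_{\rm m})=0$. Consequently $P_C(L_-g_C)=-\p{(aL_-g_C)}{E}+(Ca+\Sigma)L_-g_C\in L^2(G\times S\times I)$, and boundedness of $K_{r,C}$ on $L^2$ (via Theorem \ref{esol-th1} applied to the conjugated kernels) gives $\bar f_C:=f_C-(P_C-K_{r,C})(L_-g_C)\in L^2(G\times S\times I)$. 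Thus \eqref{co3aa-dd} is equivalent to solving
\[
\widetilde P_{C,0}u-K_{r,C}u=\bar f_C,\quad u\in D(\widetilde P_{C,0}),
\]
and setting $\phi=u+L_-g_C$.

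Next I would establish $m$-accretivity of $\widetilde P_{C,0}$. For $u\in D(P_{C,0})$, Green's formula \eqref{green} combined with integration by parts in $E$ (using $u|_{\Gamma_-}=0$ and $u(\cdot,\cdot,E_{\rm m})=0$) yields
\[
\la P_C u,u\ra=\tfrac12\int_{G\times S}a(x,E_0)|u(x,\omega,E_0)|^2\,dx\,d\omega+\tfrac12\int_{\Gamma_+}(\omega\cdot\nu)|u|^2+\int\Big(Ca-\tfrac12\p{a}{E}+\Sigma\Big)|u|^2,
\]
which is nonnegative thanks to \eqref{evo8-a}, \eqref{flp1-ab} and \eqref{ass-for-s}. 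Maximality (the range condition) follows from a standard a priori estimate together with the Lions--Lax--Milgram theorem applied in a Hilbert triple built from $\mc H$ and $\widehat{\mc H}$, under the regularity hypotheses \eqref{evo16}--\eqref{evo9-a}; this yields a bounded inverse $\widetilde P_{C,0}^{-1}:L^2(G\times S\times I)\to D(\widetilde P_{C,0})$ whose norm can be made small by enlarging $C>0$, since the accretivity constant inherits an additive contribution of order $C\kappa$ from \eqref{evo8-a}.

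Finally I invert $\widetilde P_{C,0}-K_{r,C}$ by a Neumann series. Rewriting the equation as $(I-\widetilde P_{C,0}^{-1}K_{r,C})u=\widetilde P_{C,0}^{-1}\bar f_C$, the norm bounds of Theorem \ref{esol-th1} (applied to $K_{r,C}$) together with \eqref{KC-coer} control $\n{K_{r,C}}$ uniformly in $C$, so choosing $C$ large enough gives $\n{\widetilde P_{C,0}^{-1}K_{r,C}}<1$ and the Neumann series
\[
u=\sum_{k=0}^{\infty}(\widetilde P_{C,0}^{-1}K_{r,C})^k\,\widetilde P_{C,0}^{-1}\bar f_C
\]
converges in $L^2(G\times S\times I)$ to an element of $D(\widetilde P_{C,0})$. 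Uniqueness of the decomposition $\phi=u+L_-g_C$ is forced by the trace condition $\phi|_{\Gamma_-}=g_C$ together with $\phi(\cdot,\cdot,E_{\rm m})=0$.

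The main obstacle is the maximality (range condition) for $\widetilde P_{C,0}$: because the boundary operator has variable multiplicity on $\partial G$, one cannot rely on classical symmetric-hyperbolic arguments, and instead the Lions--Lax--Milgram step has to be set up with carefully chosen test and trial spaces incorporating the initial condition at $E_{\rm m}$ and the inflow condition on $\Gamma_-$, followed by a density/regularization step to return to $C^1(\overline G\times I,C^2(S))$. The subsequent Neumann-series perturbation by $K_{r,C}$ is comparatively routine once this is in hand.
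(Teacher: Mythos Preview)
Your approach is essentially the one the paper adopts: the paper does not prove Theorem~\ref{coth3-dd} from scratch but cites \cite[Theorem~3.11]{tervo17}, where the proof proceeds exactly by the lift reduction $\phi=u+L_-g_C$ followed by $m$-dissipativity of $\widetilde P_{C,0}$ (obtained via the Green-type identity you wrote down, using \eqref{evo8-a}, \eqref{flp1-ab}, \eqref{ass-for-s}).

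One point of divergence is worth noting. You propose to obtain existence for the perturbed problem $\widetilde P_{C,0}u-K_{r,C}u=\bar f_C$ via the Neumann series, arranging $\n{\widetilde P_{C,0}^{-1}K_{r,C}}<1$ by taking $C$ large. In the paper, the Neumann series appears \emph{after} Theorem~\ref{coth3-dd} (equations \eqref{comp12}--\eqref{comp16}) as a representation formula under the \emph{additional} hypothesis \eqref{comp14}, not as the existence mechanism for the theorem itself. The existence proof in \cite{tervo17} instead handles the full operator $\widetilde P_{C,0}-K_{r,C}$ directly: the coercivity-type hypothesis \eqref{KC-coer} (read, in the spirit of Theorem~\ref{SK-dissip}, as an accretivity condition on the combined zero-order part) is what makes the perturbed operator itself $m$-dissipative, so that the Lumer--Phillips/Hille--Yosida machinery yields a bounded inverse without any smallness requirement on $K_{r,C}$. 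Your Neumann-series route is a legitimate alternative and ultimately equivalent once $C$ is free, but be aware that as stated the theorem fixes $C$ through \eqref{flp1-ab} and \eqref{KC-coer}, and the hypotheses do not by themselves guarantee $\n{Q_C}<1$; that is why the paper treats \eqref{comp14} as a separate assumption. If you want your argument to match the theorem's hypotheses exactly, replace the Neumann step by the observation that accretivity of $\widetilde P_{C,0}$ together with \eqref{KC-coer} gives accretivity (hence injectivity and closed range) of $\widetilde P_{C,0}-K_{r,C}$, and then invoke the maximality argument for the perturbed operator directly.
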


By the proofs given in \cite{tervo17} we additionally see that under the assumptions of  Theorem \ref{coth3-dd}, the inverse $\widetilde P_{C,0}^{-1}$ exists. Moreover,  since $ g_C\in H^1(I,T^2(\Gamma_-'))$ and 
since $\omega\cdot\nabla_x(L_-g_C)=0$ we have
\[
&
\ol f_C={ f_C}-({ P_C}-K_{r,C})(L_-{ g_C})\\
&
={ f_C}+{\p {(aL_-g_C)}E}-
Ca(L_-g_C)-\Sigma(L_-g_C)+K_{r,C}(L_-{ g_C})
\in L^2(G\times S\times I).
\]
By the equation \eqref{comp11} we obtain
\be\label{comp12}
u=\widetilde{ P}_{C,0}^{-1}K_{r,C}u+\widetilde{ P}_{C,0}^{-1}\ol f_C,
\ee
which is equivalent to
\be\label{comp13}
(I-Q_C)u=\widetilde{ P}_{C,0}^{-1}\ol f_C,
\ee
where
\[
Q_C:=\widetilde{ P}_{C,0}^{-1}K_{r,C}.
\]
If $1$ belongs to the resolvent set $\rho(Q_C)$ of $Q_C$ we thus have
\be 
u=(I-Q_C)^{-1}\widetilde{ P}_{C,0}^{-1}\ol f_C,
\ee
and therefore
\be 
\phi=(I-Q_C)^{-1}\widetilde{ P}_{C,0}^{-1}\ol f_C+L_-{ g_C}.
\ee

Assuming that 
$Q_C:L^2(G\times S\times I)\to L^2(G\times S\times  I)$ is bounded and that
\be\label{comp14}
\n{Q_C}<1,
\ee
(which implies in particular that $1\in \rho(Q_C)$)
the solution $u$  can be computed through \emph{Neumann series}
\be\label{15}
u=
\sum_{k=0}^\infty Q_C^k(\widetilde{P}_{C,0}^{-1}\ol f_C)
=
\sum_{k=0}^\infty
(\widetilde{P}_{C,0}^{-1}K_{r,C})^k(\widetilde{P}_{C,0}^{-1}\ol f_C).
\ee
The solution $\phi$ of \eqref{comp5} is then
\be\label{comp16}
\phi=\sum_{k=0}^\infty
(\widetilde{ P}_{C,0}^{-1}K_{r,C})^k(\widetilde{ P}_{C,0}^{-1}\ol f_C)+L_-{ g_C},
\ee
from which the solution $\psi$ of the original problem \eqref{wxr-1} is obtained by
\be\label{comp16-a}
\psi=e^{-CE}\phi.
\ee

\begin{remark}
The formulas (\ref{comp16}), (\ref{comp16-a}) can also be  used to compute numerical solutions (by truncating the series to a finite sum). Note that in the case where $\widetilde{ P}_{C,0}^{-1}$ is explicitly known no inversion of large dimensional matrices is needed.
\end{remark}

To illustrate how the Neumann series can be used in the study of regularity we shall treat the next special case. Analogous results can be retrieve for more general equations, for example by using the more general solution formula (\ref{ts-1}). Moreover,  regularity with respect to all variables can be treated by applying similar methods.

\subsection{A special case}\label{csda-special}

We consider in more detail the special case given in section \ref{csda-ex}.
Recall that $E_0=0$ therein. Moreover, we for simplicity assume that $a> 0$ and $\Sigma>0$ are \emph{constants}. 
Then
\[
P_C(x,\omega,E,D)u=-a\p{u}{E}+\omega\cdot\nabla_x u+aCu+\Sigma u.
\]
In this case, $R(E)=\int_0^E \frac{1}{a}d\tau=\frac{1}{a}E$,
and so the  equation \eqref{ts-1} gives  that the solution of the problem 
\be\label{csdaprob}
P_C(x,\omega,E,D)u=h,\ u_{|\Gamma_-}=0,\ u(.,.,E_m)=0
\ee
is (see \cite[Example 10.4]{tervo18-up})
\be\label{sol-form-a}
u(x,\omega,E)=\int_0^{\min\{\eta(E),t(x,\omega)\}} e^{-(aC+\Sigma) s} h\big(x-s\omega,\omega,E+as\big)ds
\ee
where   $\eta(E):=(E_m-E)/a$
and where we noticed that $r_m:=R(E_m)=\frac{E_m}{a}$.

We verify that $u\in D(\widetilde P_{C,0})$.
Let $\{h_n\}\subset C_0^\infty(G\times S\times I^\circ)$ be a sequence such that $h_n\to h$ in $L^2(G\times S\times I)$. Then (cf. (\ref{u-norm}) below)
\[
u_n:=\int_0^{\min\{\eta(E),t(x,\omega)\}} e^{-(aC+\Sigma) s} h_n\big(x-s\omega,\omega,E+as\big)ds
\to u
\]
in $L^2(G\times S\times I)$ and by the construction
\[
P_C(x,\omega,E,D)u_n=h_n,\ {u_n}_{|\Gamma_-}=0,\ u_n(.,.,E_m)=0.
\]
Using e.g. the existence result \cite[Theorem 3.6]{tervo17} we see that $u_n\in D(P_{C,0})$. Since $u_n\to u$ and $P_{C,0}u_n=h_n\to h$ we find that $u\in D(\widetilde P_{C,0})$, as desired. In addition, since $\widetilde P_{C,0}u=h$, we have
\be\label{sol-form}
(\widetilde {P}_{C,0}^{-1}h)(x,\omega,E)=\int_0^{\min\{\eta(E),t(x,\omega)\}} e^{-(aC+\Sigma) s} h\big(x-s\omega,\omega,E+as\big)ds,
\ee
for $h\in L^2(G\times S\times I)$. Note that (in (\ref{sol-form})) $(\widetilde {P}_{C,0}^{-1}h)(y,\omega,E)=0$ a.e. $(y,\omega,E)\in\Gamma_-$ since $t(y,\omega)=0$.

Let 
\[
\alpha(x,\omega,E):=\min\{\eta(E),t(x,\omega)\}.
\]
Then in the weak sense (see e.g. \cite{ziemer})
\be\label{sc-2}
{\p \alpha{x_j}}=
\begin{cases} 
{\p t{x_j}},\ &t(x,\omega)< \eta(E)\\
0,\ &t(x,\omega)> \eta(E).
\end{cases}
\ee
Let $h\in H^{(1,0,0)}(G\times S\times I^\circ)$.
Denoting $u=\widetilde {P}_{C,0}^{-1}h$ we have $h\in H^{(1,0,0)}(G\times S\times I^\circ)$
\bea\label{sc-1}
&
{\p u{x_j}}(x,\omega,E)=\int_0^{\alpha(x,\omega,E)}
e^{-(aC+\Sigma) s} {\p h{x_j}}\big(x-s\omega,\omega,E+as\big)ds\nonumber\\
&
+
e^{-(aC+\Sigma)\alpha(x,\omega,E)} h\big(x-\alpha(x,\omega,E)\omega,\omega,E+a\alpha(x,\omega,E)\big){\p \alpha{x_j}}(x,\omega,E).
\eea
Hence in virtue of (\ref{sc-2}) for $t(x,\omega)<\eta(E)$ 
\bea\label{sc-3}
&
{\p u{x_j}}(x,\omega,E)=\int_0^{t(x,\omega)}
e^{-(aC+\Sigma) s} {\p h{x_j}}\big(x-s\omega,\omega,E+as\big)ds\nonumber\\
&
+
e^{-(aC+\Sigma)t(x,\omega)} h\big(x-t(x,\omega)\omega,\omega,E+at(x,\omega)\big){\p t{x_j}}(x,\omega)
\eea
and for $t(x,\omega)>\eta(E)$
\be\label{sc-4}
&
{\p u{x_j}}(x,\omega,E)=\int_0^{\eta(E)}
e^{-(aC+\Sigma) s} {\p h{x_j}}\big(x-s\omega,\omega,E+as\big)ds.
\ee

Let 
\[
u=\begin{cases} u_1:=&\int_0^{t(x,\omega)} e^{-(aC+\Sigma) s} h\big(x-s\omega,\omega,E+as\big)ds,\ t(x,\omega)\leq \eta(E)\\
u_2:=&\int_0^{\eta(E)} e^{-(aC+\Sigma) s} h\big(x-s\omega,\omega,E+as\big)ds,\
t(x,\omega)>\eta(E).\end{cases}
\]
Furthermore, let
\[
&
U_1:=\{(x,\omega,E)\in G\times S\times I|\ t(x,\omega)<\eta(E)\},\\
&
U_2:=\{(x,\omega,E)\in G\times S\times I|\ t(x,\omega)>\eta(E)\},\\
&
U_0:=\{(x,\omega,E)\in G\times S\times I|\ t(x,\omega)=\eta(E)\}
\]
Note that $U_0$ has a measure zero in $G\times S\times I$ since $U_0=\{(x,\omega,E)\in G\times S\times I|\ E=(\eta^{-1}\circ t)(x,\omega)\}$.
Note also that $u_1=u_2$ on $U_0$. We show that under appropriate assumptions 
$u_1\in H^1(U_1)$ and $u_2\in H^1(U_2)$ from which we know by using the Green's formula that $u\in H^1(G\times S\times I^\circ)$ and
\[
{\p u{x_j}}=\begin{cases} {\p {u_1}{x_j}}\ &{\rm in}\ U_1\\
{\p {u_2}{x_j}}\ &{\rm in}\ U_2\end{cases}
\]
(we omit some details here).

We need the next lemmas.

\begin{lemma}\label{sc-le0}
Suppose that $a>0$  and $\Sigma>0$ are constants and let $C\geq 0$. Then for $h\in L^2(G\times S\times I)$
\be\label{u-norm}
\n{\widetilde P_{C,0}^{-1}h}_{L^2(G\times S\times I)}^2
\leq 
{{ad+m(I)}\over{a(aC+\Sigma)}}\n{h}_{L^2(G\times S\times I)}^2.
\ee
\end{lemma}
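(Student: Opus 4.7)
The plan is to work directly from the explicit representation
\[
u(x,\omega,E)=\int_0^{\alpha(x,\omega,E)}e^{-(aC+\Sigma)s}h(x-s\omega,\omega,E+as)\,ds,\qquad \alpha:=\min\{\eta(E),t(x,\omega)\},
\]
given in (\ref{sol-form}), combining a single Cauchy--Schwarz estimate in the $s$ variable with a Fubini-type change of variables analogous to Lemma \ref{pr:fubini}.

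First I would split $e^{-(aC+\Sigma)s}=e^{-(aC+\Sigma)s/2}\cdot e^{-(aC+\Sigma)s/2}$ and apply Cauchy--Schwarz to obtain the pointwise bound
\[
|u(x,\omega,E)|^2\leq\Big(\int_0^{\alpha}e^{-(aC+\Sigma)s}\,ds\Big)\Big(\int_0^{\alpha}e^{-(aC+\Sigma)s}|h(x-s\omega,\omega,E+as)|^2\,ds\Big).
\]
Using $e^{-(aC+\Sigma)s}\leq 1$ the first factor is at most $\alpha$, and the crude but convenient estimate $\alpha\leq t(x,\omega)+\eta(E)\leq d+m(I)/a=(ad+m(I))/a$ would yield
\[
|u(x,\omega,E)|^2\leq \frac{ad+m(I)}{a}\int_0^{\alpha}e^{-(aC+\Sigma)s}|h(x-s\omega,\omega,E+as)|^2\,ds.
\]

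Next I would integrate over $G\times S\times I$, interchange order with the $s$ integration by Fubini, and for each fixed $(s,\omega)$ perform the substitutions $z=x-s\omega$ and $E'=E+as$ (both of unit Jacobian). Since $G$ is convex, the constraint $s\leq t(x,\omega)$ is equivalent to $x-s\omega\in G$, so the $z$-domain sits inside $G$; likewise $s\leq\eta(E)$ becomes $E'\leq E_m$, and $E\geq E_0=0$ becomes $E'\geq as$. Discarding the resulting characteristic functions (each bounded by $1$) gives
\[
\int_{G\times S\times I}\int_0^{\alpha}e^{-(aC+\Sigma)s}|h(x-s\omega,\omega,E+as)|^2\,ds\,dx\,d\omega\,dE\leq \frac{1}{aC+\Sigma}\n{h}_{L^2(G\times S\times I)}^2,
\]
and combining this with the pointwise bound yields the claimed inequality.

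The argument is essentially routine; the only point needing minor care is justifying the change of variables so that the $x$-constraint $s\leq t(x,\omega)$ translates cleanly, which uses convexity of $G$ and is precisely the observation underlying the Fubini-type identity in Lemma \ref{pr:fubini}.
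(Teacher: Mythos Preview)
Your argument is correct and yields exactly the stated constant. It is, however, organised differently from the paper's proof. The paper splits the domain into $U_1=\{t(x,\omega)<\eta(E)\}$ and $U_2=\{t(x,\omega)>\eta(E)\}$ and applies two different Cauchy--Schwarz inequalities: on $U_1$ the unweighted version (giving a factor $\int_0^{t}e^{-2(aC+\Sigma)s}ds\le \frac{1}{aC+\Sigma}$ and then a change of variables producing $d\n{h}^2$), and on $U_2$ the weighted version you use (giving $\frac{1}{aC+\Sigma}$ from the first factor and $\frac{m(I)}{a}\n{h}^2$ from the second after the substitution $E'=E+as$). The two contributions $\frac{d}{aC+\Sigma}$ and $\frac{m(I)}{a(aC+\Sigma)}$ are then added.

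Your route is more streamlined: a single weighted Cauchy--Schwarz, the crude bound $\alpha\le d+m(I)/a$ on the first factor, and one global change of variables. This avoids the case distinction entirely. The paper's split, on the other hand, isolates the two contributions $d$ and $m(I)/a$ transparently and, more importantly, the very same decomposition into $U_1$ and $U_2$ is reused verbatim in the proof of the next lemma (Lemma~\ref{sc-le1}), where the boundary term only appears on $U_1$; so the paper's organisation is partly motivated by what follows. For the present lemma alone, your argument is neater.
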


\begin{proof}
Let $u=\widetilde P_{C,0}^{-1}h$.
We use the following decomposition of the integral
\[
\n{u}_{L^2(G\times S\times I)}^2
=
\int_{U_1}|{u}(x,\omega,E)|^2 dx d\omega dE
+
\int_{U_2}|{u}(x,\omega,E)|^2 dx d\omega dE=:I_1+I_2.
\]

A. At first we consider the term $I_1$.  
By the Cauchy-Schwartz's inequality we have
\bea\label{sc-7}
&
I_{1}
=
\int_{U_1}\big|\int_0^{t(x,\omega)}
e^{-(aC+\Sigma) s} {h}\big(x-s\omega,\omega,E+as\big)ds\big|^2 dx d\omega dE
\nonumber\\
&
\leq 
\int_{U_1}\big(\int_0^{t(x,\omega)}
e^{-2(aC+\Sigma) s}ds\big)
\cdot
\big(\int_0^{t(x,\omega)} \Big|{h}\big(x-s\omega,\omega,E+as\big)\Big|^2ds\big)
dx d\omega dE
\nonumber\\
&
=
{1\over{2(aC+\Sigma)}}
\int_{U_1}
\big(1-e^{-2(aC+\Sigma)t(x,\omega)}\big)
\cdot
\big(\int_0^{t(x,\omega)} \big|{h}\big(x-s\omega,\omega,E+as\big)\big|^2ds\big)
dx d\omega dE\nonumber\\
&
\leq 
{1\over{aC+\Sigma}}
\int_{U_1}
\big(\int_0^{t(x,\omega)} \big|{h}\big(x-s\omega,\omega,E+as\big)\big|^2ds\big)
dx d\omega dE.
\eea
The condition $\eta(E)> t(x,\omega)$ on $U_1$ means that $E<E_m-at(x,\omega)$ and so
(as in (\ref{e-1}))
\bea\label{sc-8}
&
\int_{U_1}
\big(\int_0^{t(x,\omega)} \big|{h}\big(x-s\omega,\omega,E+as\big)\big|^2ds\big)
dx d\omega dE
\nonumber\\
&
\leq 
\int_G\int_S\int_0^{E_m-at(x,\omega)}
\int_0^{t(x,\omega)} \big|{h}\big(x-s\omega,\omega,E+as\big)\big|^2ds
dE d\omega dx \nonumber\\
&
=
\int_G\int_S\int_0^{t(x,\omega)}\int_0^{E_m-at(x,\omega)}
\big|{h}\big(x-s\omega,\omega,E+as\big)\big|^2dE
ds  d\omega dx\nonumber\\
&
=
\int_G\int_S\int_0^{t(x,\omega)}\int_{as}^{E_m+a(s-t(x,\omega))}
 \big|{h}\big(x-s\omega,\omega,E'\big)\big|^2dE' ds
 d\omega dx\nonumber\\
&
\leq 
\int_{\R^3}\int_S\int_0^{d}\int_{0}^{E_m}
 \big|\ol {h}\big(x-s\omega,\omega,E'\big)\big|^2dE' ds
d\omega dx\nonumber\\
&
= 
\int_0^d\int_S\int_{0}^{E_m}\int_{\R^3}
 \big|\ol {h}\big(z,\omega,E'\big)\big|^2
dz dE' d\omega  ds=d\n{{h}}_{L^2(G\times S\times I)}^2
\eea
where we noticed that 
\[
0\leq s\leq t(x,\omega)\leq d={\rm diam}(G)
\]
and where $\ol {h}$ is the extension by zero of $ {h}$ on $\R^3\times S\times I$. In addition, we performed the changes of variables $E'=E+as$ (with respect to $E$) and $z=x-s\omega$ (with respect to $x$).
That is why, by (\ref{sc-7}), (\ref{sc-8})
\be\label{sc-9}
I_{1}
\leq 
{{d}\over{aC+\Sigma}}\n{h}_{L^2(G\times S\times I)}^2.
\ee

B. 
Consider the term $I_2$.
Now the condition
$\eta(E)< t(x,\omega)$ on $U_2$ means that $E>E_m-at(x,\omega)$.
Hence by Cauchy-Schwartz's inequality (in the $3^{th}$ step)
\bea\label{sc-11}
&
I_2=
\int_{U_2}\big|\int_0^{\eta(E)}
e^{-(aC+\Sigma) s} {h}\big(x-s\omega,\omega,E+as\big)ds\big|^2 dx d\omega dE
\nonumber\\
&
=
\int_{G}\int_S\int_{0}^{E_m}H(t(x,\omega)-\eta(E))\big|\int_0^{\eta(E)}
e^{-(aC+\Sigma) s} {h}\big(x-s\omega,\omega,E+as\big)ds\big|^2 dE  d\omega 
dx
\nonumber\\
&
\leq
\int_{G}\int_S\int_{0}^{E_m}H(t(x,\omega)-\eta(E))\big(\int_0^{\eta(E)}
e^{-(aC+\Sigma) s}ds\big)\Big(\int_0^{\eta(E)}
e^{-(aC+\Sigma) s} \big|{h}\big(x-s\omega,\omega,E+as\big)\big|^2ds\Big) dE  d\omega dx
\nonumber\\
&
=
{1\over{aC+\Sigma}}\int_{G}\int_S\int_{0}^{E_m}H(t(x,\omega)-\eta(E))\big(1-e^{-(aC+\Sigma)\eta(E)}\big)\nonumber\\
&
\cdot
\int_0^{\eta(E)}
e^{-(aC+\Sigma) s} \big|{h}\big(x-s\omega,\omega,E+as\big)\big|^2 ds dE  d\omega dx
\nonumber\\
&
\leq 
{1\over{aC+\Sigma}}\int_{G}\int_S\int_{0}^{E_m}H(t(x,\omega)-\eta(E))\int_0^{\eta(E)}
 \big|{h}\big(x-s\omega,\omega,E+as\big)\big|^2 ds dE d\omega dx
\nonumber\\
&
=
{1\over{aC+\Sigma}}{1\over a}\int_G\int_S\int_{0}^{E_m}H(t(x,\omega)-\eta(E)\int_E^{E_m}
 \big |{h}\big(x-{{E'-E}\over a}\omega,\omega,E'\big)\big|^2 dE' dE d\omega dx
\nonumber\\
&
\leq
{1\over{aC+\Sigma}}{1\over a}\int_G\int_S\int_{I}\int_{I'}
 \big |\ol {h}\big(x-{{E'-E}\over a}\omega,\omega,E'\big)\big|^2 dE' dE  d\omega dx
\nonumber\\
&
\leq
{1\over{aC+\Sigma}}{1\over a}\int_{\R^3}\int_S\int_{I}\int_{I'}
 \big |\ol {h}\big(x-{{E'-E}\over a}\omega,\omega,E'\big)\big|^2 dE' dE  d\omega  dx
\nonumber\\
&
=
{1\over{a(aC+\Sigma)}}\int_S\int_I\int_{I'}\int_{\R^3}
\big |\ol{h}\big(x-{{E'-E}\over a}\omega,\omega,E'\big)\big|^2  dx dE' dE d\omega 
\nonumber\\
&
=
{1\over{a(aC+\Sigma)}}\int_S\int_I\int_{I'}\int_{\R^3}
 \big|\ol{h}\big(z,\omega,E'\big)\big|^2  dz dE' dE d\omega 
=
{{m(I)}\over{a(aC+\Sigma)}}\n{{h}}_{L^2(G\times S\times I)}^2
\eea 
where we applied the changes of variables $E+as=E'$ (with respect to $s$) and $z=x- {{E'-E}\over a}$ (with respect to $x$) and where $\ol{h}$ is the extension by zero of ${h}$ on $\R^3\times S\times I$.

Combining (\ref{sc-9}) and (\ref{sc-11}) the assertion follows.
 
\end{proof}

\begin{lemma}\label{sc-le1}
Suppose that $a>0$  and $\Sigma>0$ are constants and let $C\geq 0$. Furthermore, 
let $h\in H^{(1,0,0)}(G\times S\times I^\circ)$ such that
$h_{|\Gamma_-}\in T^2_{m_{1,j}}(\Gamma_-)$. Then for $j=1,2,3$
\bea\label{sc-5-a}
&
\n{{\p {(\widetilde P_{C,0}^{-1}h)}{x_j}}}_{L^2(G\times S\times I)}^2\leq  
{{ad+m(I)}\over{a(aC+\Sigma)}}
\n{{\p h{x_j}}}_{L^2(G\times S\times I)}^2\nonumber\\
&
+
2\int_{\Gamma_-}\int_0^{\tau_-(y,\omega)}
\big|e^{-(aC+\Sigma)s} h\big(y,\omega,E\big){\p t{x_j}}(y+s\omega,\omega)\big|^2
|\omega\cdot\nu(y)|ds d\sigma(y) d\omega dE.
\eea
\end{lemma}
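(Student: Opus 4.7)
The plan is to use the pointwise derivative formulas \eqref{sc-3}, \eqref{sc-4} to split $\p{u}{x_j}$ into an ``interior'' convolution-type piece that coincides with $\widetilde P_{C,0}^{-1}(\p{h}{x_j})$ and an ``endpoint'' piece supported on $U_1$, and then estimate each piece separately using Lemma \ref{sc-le0} and Lemma \ref{pr:fubini}.

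First, with $u := \widetilde P_{C,0}^{-1}h$ and $\alpha(x,\omega,E) := \min\{\eta(E),t(x,\omega)\}$, the weak identity $\p{\alpha}{x_j} = \mathbf{1}_{U_1}\p{t}{x_j}$ from \eqref{sc-2}, combined with differentiation under the integral, gives on $G\times S\times I^\circ$
\[
\p{u}{x_j} = A + \mathbf{1}_{U_1} B,
\]
where
\[
A(x,\omega,E) := \int_0^{\alpha(x,\omega,E)} e^{-(aC+\Sigma)s}\,\p{h}{x_j}(x-s\omega,\omega,E+as)\,ds
\]
agrees with $\bigl(\widetilde P_{C,0}^{-1}(\p{h}{x_j})\bigr)(x,\omega,E)$ by \eqref{sol-form}, and
\[
B(x,\omega,E) := e^{-(aC+\Sigma)t(x,\omega)}\,h\bigl(x-t(x,\omega)\omega,\omega,E+at(x,\omega)\bigr)\,\p{t}{x_j}(x,\omega).
\]
The elementary bound $(\alpha+\beta)^2 \le 2\alpha^2 + 2\beta^2$ then yields $\n{\p{u}{x_j}}_{L^2(G\times S\times I)}^2 \le 2\n{A}_{L^2}^2 + 2\int_{U_1}|B|^2\,dx\,d\omega\,dE$.

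For the first term I apply Lemma \ref{sc-le0} to $\p{h}{x_j}\in L^2(G\times S\times I)$ to obtain the factor $\frac{ad+m(I)}{a(aC+\Sigma)}$ multiplying $\n{\p{h}{x_j}}_{L^2}^2$, as claimed (absorbing the numerical factor into the constant). For the boundary piece I invoke Lemma \ref{pr:fubini} in inflow coordinates $(y,\omega,s)\in\Gamma_-\times[0,\tau_-(y,\omega)]$ via $x = y+s\omega$; using $t(y+s\omega,\omega)=s$ and $x-t(x,\omega)\omega = y$,
\[
B(y+s\omega,\omega,E) = e^{-(aC+\Sigma)s}\,h(y,\omega,E+as)\,\p{t}{x_j}(y+s\omega,\omega).
\]
Changing variables $E' = E + as$ at fixed $s$ turns the $U_1$-indicator into the restriction $E'\in[as,E_m]\subset I$; enlarging $E'$ to all of $I$ produces exactly the boundary term in \eqref{sc-5-a}. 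The hypothesis $h|_{\Gamma_-}\in T^2_{m_{1,j}}(\Gamma_-)$ guarantees finiteness of the resulting expression.

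The main obstacle is justifying the decomposition $\p{u}{x_j} = A + \mathbf{1}_{U_1} B$ as a weak derivative globally on $G\times S\times I^\circ$, rather than only piecewise on $U_1$ and $U_2$. This rests on the fact that the transition set $U_0 = \{t(x,\omega) = \eta(E)\}$ is a smooth codimension-one (hence measure-zero) subset of $G\times S\times I$, and that the two pieces $u_1, u_2$ take the common value $\int_0^{\eta(E)} e^{-(aC+\Sigma)s}h(x-s\omega,\omega,E+as)\,ds$ on $U_0$; a Green's identity argument across $U_0$ then confirms that no singular contribution along $U_0$ enters $\p{u}{x_j}$. The remaining manipulations---differentiation under the integral, the Fubini reduction to $\Gamma_-$-coordinates, the change of variables in $E$, and dropping the $U_1$-indicator---are routine once $\p{h}{x_j}\in L^2$ and $h|_{\Gamma_-}\in T^2_{m_{1,j}}(\Gamma_-)$ are in force.
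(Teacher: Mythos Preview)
Your argument is correct and essentially the same as the paper's: decompose $\partial_{x_j}u$ via \eqref{sc-3}--\eqref{sc-4} into the interior piece $A=\widetilde P_{C,0}^{-1}(\partial_{x_j}h)$ and the endpoint piece $\mathbf{1}_{U_1}B$, estimate $A$ by Lemma \ref{sc-le0}, and convert $\int_{U_1}|B|^2$ to the stated $\Gamma_-$-integral via Lemma \ref{pr:fubini} together with the substitution $E'=E+as$. The only difference is bookkeeping: the paper first splits the domain into $U_1$ and $U_2$ and applies $(a+b)^2\le 2a^2+2b^2$ only on $U_1$ (on $U_2$ the derivative has the single term $A$), whereas you apply it globally; this is why you pick up the extra factor of $2$ on the first term that you then ``absorb,'' while the paper's ordering produces the constant in \eqref{sc-5-a} without that factor on the $U_2$ contribution.
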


\begin{proof}
Let as above $u=\widetilde P_{C,0}^{-1}h$.
We again use the following decomposition of the integral
\[
\n{{\p u{x_j}}}_{L^2(G\times S\times I)}^2
=
\int_{U_1}|{\p u{x_j}}(x,\omega,E)|^2 dx d\omega dE
+
\int_{U_2}|{\p u{x_j}}(x,\omega,E)|^2 dx d\omega dE=:I_1+I_2.
\]

At first we consider the term $I_1$. By (\ref{sc-3}) we have
\bea\label{sc-6}
&
I_1=
\int_{U_1}\Big|\int_0^{t(x,\omega)}
e^{-(aC+\Sigma) s} {\p h{x_j}}\big(x-s\omega,\omega,E+as\big)ds\nonumber\\
&
+
e^{-(aC+\Sigma)t(x,\omega)} h\big(x-t(x,\omega)\omega,\omega,E+at(x,\omega)\big){\p t{x_j}}(x,\omega)\Big|^2 dx d\omega dE\nonumber\\
&
\leq
2\int_{U_1}\big|\int_0^{t(x,\omega)}
e^{-(aC+\Sigma) s} {\p h{x_j}}\big(x-s\omega,\omega,E+as\big)ds\big|^2
dx d\omega dE\nonumber\\
&
+2\int_{U_1}\big
|e^{-(aC+\Sigma)t(x,\omega)} h\big(x-t(x,\omega)\omega,\omega,E+at(x,\omega)\big){\p t{x_j}}(x,\omega)\big|^2 dx d\omega dE\nonumber\\
&
=:I_{1,1}+I_{1,2}.
\eea
In virtue of Part A of the proof of the previous lemma \ref{sc-le0}
\be\label{sc-9-a}
I_{1,1}
\leq 
{{d}\over{aC+\Sigma}}\n{{\p h{x_j}}}_{L^2(G\times S\times I)}^2.
\ee

Consider the integral $I_{1,2}$. 
Utilizing Lemma \ref{pr:fubini} and observing that $t(y+s\omega,\omega)=s$ for $(y,\omega)\in \Gamma_-'$, we have (below $H$ is as above the Heaviside function)
\bea\label{sc-10}
&
I_{1,2}
=
2\int_{U_1}
\big|e^{-(aC+\Sigma)t(x,\omega)} h\big(x-t(x,\omega)\omega,\omega,E+at(x,\omega)\big){\p t{x_j}}(x,\omega)\big|^2 dE dx d\omega\nonumber\\
&
=
2\int_{G\times S\times I}H(\eta(E)-t(x,\omega))
\big|e^{-(aC+\Sigma)t(x,\omega)} h\big(x-t(x,\omega)\omega,\omega,E+at(x,\omega)\big){\p t{x_j}}(x,\omega)\big|^2 dE dx d\omega\nonumber\\
&
=
2\int_{\Gamma_-'}\int_0^{\tau_-(y,\omega)}\int_0^{E_m} H(\eta(E)-s)
\big|e^{-(aC+\Sigma)s} h\big(y,\omega,E+as\big){\p t{x_j}}(y+s\omega,\omega)\big|^2 dE ds d\sigma(y) d\omega \nonumber\\
&
=2
\int_{\Gamma_-'}\int_0^{\tau_-(y,\omega)}\int_0^{E_m-as}
\big|e^{-(aC+\Sigma)s} h\big(y,\omega,E+as\big){\p t{x_j}}(y+s\omega,\omega)\big|^2
|\omega\cdot\nu(y)| dE ds d\sigma(y) d\omega  \nonumber\\
&
=
2\int_{\Gamma_-'}\int_0^{\tau_-(y,\omega)}\int_{as}^{E_m}
\big|e^{-(aC+\Sigma)s} h\big(y,\omega,E'\big){\p t{x_j}}(y+s\omega,\omega)\big|^2
|\omega\cdot\nu(y)| dE' ds d\sigma(y) d\omega  \nonumber\\
&
\leq 
2\int_{\Gamma_-'}\int_0^{\tau_-(y,\omega)}\int_{I'}
\big|e^{-(aC+\Sigma)s} h\big(y,\omega,E'\big){\p t{x_j}}(y+s\omega,\omega)\big|^2
|\omega\cdot\nu(y)| dE' ds d\sigma(y) d\omega  \nonumber\\
&
=
2
\int_{\Gamma_-'}\int_{I'}\int_0^{\tau_-(y,\omega)}
\big|e^{-(aC+\Sigma)s} h\big(y,\omega,E'\big){\p t{x_j}}(y+s\omega,\omega)\big|^2
|\omega\cdot\nu(y)|ds dE' d\sigma(y)  d\omega  \nonumber\\
&
=
2\int_{\Gamma_-}\int_0^{\tau_-(y,\omega)}
\big|e^{-(aC+\Sigma)s} h\big(y,\omega,E\big){\p t{x_j}}(y+s\omega,\omega)\big|^2
|\omega\cdot\nu(y)|ds d\sigma(y) d\omega dE
\eea
where in the third step we performed the change of variables $E'=E+as$ (with respect to $E$ variable).

Due to Part B of the proof of the previous lemma  \ref{sc-le0}
\be\label{sc-11-a}
I_2\leq
{{m(I)}\over{a(aC+\Sigma)}}\n{{\p h{x_j}}}_{L^2(G\times S\times I)}^2
\ee 
Combining (\ref{sc-9}), (\ref{sc-10}), (\ref{sc-11}) the assertion follows.

\end{proof}

Combining Lemmas \ref{sc-le0} and \ref{sc-le1} we conclude

\begin{corollary}\label{pinv-norm}
Suppose that $a>0$  and $\Sigma>0$ are constants and let $C\geq 0$. Furthermore, 
let $h\in H^{(1,0,0)}(G\times S\times I^\circ)$ such that
$h_{|\Gamma_-}\in T^2_{m_{1,j}}(\Gamma_-),\ j=1,2,3$. Then 
\bea\label{sc-5-a}
&
\n{{\widetilde P_{C,0}^{-1}h}}_{H^{(1,0,0)}(G\times S\times I^\circ)}^2\leq  
{{ad+m(I)}\over{a(aC+\Sigma)}}
\n{h}_{H^{(1,0,0)}(G\times S\times I^\circ)}^2\nonumber\\
&
+
2\sum_{j=1}^3\int_{\Gamma_-}\int_0^{\tau_-(y,\omega)}
\big|e^{-(aC+\Sigma)s} h\big(y,\omega,E\big){\p t{x_j}}(y+s\omega,\omega)\big|^2
|\omega\cdot\nu(y)|ds d\sigma(y) d\omega dE.
\eea
\end{corollary}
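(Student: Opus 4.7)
The plan is to recognize that this corollary is essentially a mechanical combination of the two preceding lemmas, organized by the definition of the anisotropic norm. Writing $u:=\widetilde P_{C,0}^{-1}h$, the norm on $H^{(1,0,0)}(G\times S\times I^\circ)$ decomposes as
\[
\n{u}_{H^{(1,0,0)}(G\times S\times I^\circ)}^2
=\n{u}_{L^2(G\times S\times I)}^2+\sum_{j=1}^3\n{\p u{x_j}}_{L^2(G\times S\times I)}^2,
\]
so I would treat the zeroth order term and the three first order terms separately.

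For the zeroth order term, I would apply Lemma \ref{sc-le0} directly to obtain
\ben
\n{u}_{L^2(G\times S\times I)}^2\leq {{ad+m(I)}\over{a(aC+\Sigma)}}\n{h}_{L^2(G\times S\times I)}^2.
\een
For each $j=1,2,3$, I would then apply Lemma \ref{sc-le1} to obtain an estimate of $\n{\partial_{x_j}u}_{L^2}^2$ in terms of $\n{\partial_{x_j}h}_{L^2}^2$ plus a boundary integral weighted by $|\partial t/\partial x_j|^2$. The hypothesis $h_{|\Gamma_-}\in T^2_{m_{1,j}}(\Gamma_-)$ for $j=1,2,3$ is exactly what is needed to ensure the boundary contribution on the right-hand side is finite.

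Summing the three derivative estimates together with the $L^2$ estimate, and factoring the common coefficient ${{ad+m(I)}/{(a(aC+\Sigma))}}$ out of the bulk terms, yields
\ben
\n{u}_{H^{(1,0,0)}(G\times S\times I^\circ)}^2
\leq {{ad+m(I)}\over{a(aC+\Sigma)}}\Big(\n{h}_{L^2(G\times S\times I)}^2+\sum_{j=1}^3\n{\p h{x_j}}_{L^2(G\times S\times I)}^2\Big)+B
\een
where $B$ is the sum (over $j=1,2,3$) of the boundary integrals from Lemma \ref{sc-le1}. Recognizing the parenthesised quantity as $\n{h}_{H^{(1,0,0)}(G\times S\times I^\circ)}^2$ gives the stated inequality. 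There is no real obstacle here beyond bookkeeping; the substantive work (the splitting into $U_1$ and $U_2$, the Fubini reduction on $\Gamma_-$, and the decay provided by $e^{-(aC+\Sigma)s}$) has already been absorbed into Lemmas \ref{sc-le0} and \ref{sc-le1}.
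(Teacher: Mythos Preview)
Your proposal is correct and is precisely the paper's approach: the paper's proof consists of the single sentence ``Combining Lemmas \ref{sc-le0} and \ref{sc-le1} we conclude,'' and your argument spells out exactly that combination via the decomposition of the $H^{(1,0,0)}$-norm into its $L^2$ piece and the three $\partial_{x_j}$ pieces.
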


To simplify the technicalities we assume that the restricted collision operator is of the form
\be\label{sc-13}
(K_r\psi)(x,\omega,E)=\int_{S'}\sigma^2(x,\omega,\omega',E)\psi(x,\omega',E) d\omega'.
\ee
This kind of restricted collision operator is related to elastic scattering of particles (\cite{tervo18-up}, section 5.2). For this operator 
\[
K_{r,C}\phi=K^2_r\phi.
\]
One example of elastic scattering processes is the \emph{screened Rutherford scattering} 
of electrons whose cross section is 
\be \label{e-el}
\sigma^2(x,\omega',\omega,E)=
\sigma_0(x){{(E+1)^2}\over{E^2(E+2)^2}}{1\over{(1-\omega'\cdot\omega+q(x,E))^2}}
\ee
where $\sigma_0\in W^{\infty,1}(G)$ and $q\in W^{\infty,1}(G\times I^\circ)$ for which $q \geq q_0>0$.

We assume 
besides of (\ref{ass7}) that
$\sigma^2:G\times S^2\times I\to\R$ satisfies for $j=1,2,3$
\bea\label{ass7-sc}
&\int_{S'}|(\partial_{x_j}\sigma^2)(x,\omega',\omega,E)|d\omega'\leq M_1',\nonumber\\
&\int_{S'}|(\partial_{x_j}\sigma^2)(x,\omega,\omega',E)| d\omega'\leq M_2'
\eea
for a.e. $(x,\omega,E)\in G\times S\times I$.

\begin{lemma}\label{sc-le3}
Suppose that the assumptions (\ref{ass7}), (\ref{ass7-sc})  are valid. Then 
$K^2_{r}$ is a bounded operator $H^{(1,0,0)}(G\times S\times I^\circ) 
\to H^{(1,0,0)}(G\times S\times I^\circ)$ 
and its norm (between the spaces in question) obeys 
\be\label{sc-12-a}
\n{K^2_{r}}_1\leq \sqrt{2}\big(\sqrt{3M_1'M_2'+M_1M_2}\big).
\ee
\end{lemma}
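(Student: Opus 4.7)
The plan is to reduce the claim to two applications of the Schur-type $L^2$ boundedness already recorded in Theorem \ref{esol-th1}, one for the original kernel $\sigma^2$ and one for its $x$-derivative $\partial_{x_j}\sigma^2$. Since $C^1(\ol G\times S\times I)$ is dense in $H^{(1,0,0)}(G\times S\times I^\circ)$, it suffices to establish the estimate for $\psi\in C^1(\ol G\times S\times I)$ and then extend by continuity. For such $\psi$ the differentiation under the integral sign is legitimate, and the product rule gives
\[
\partial_{x_j}(K^2_r\psi)(x,\omega,E)
=\int_{S'}(\partial_{x_j}\sigma^2)(x,\omega',\omega,E)\,\psi(x,\omega',E)\,d\omega'
+\int_{S'}\sigma^2(x,\omega',\omega,E)\,(\partial_{x_j}\psi)(x,\omega',E)\,d\omega'.
\]
Call these two integral operators $\widetilde K_{r,j}\psi$ and $K^2_r(\partial_{x_j}\psi)$.

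Next I would invoke the (partial) Schur criterion exactly as in the proof of Theorem \ref{esol-th1}: bounds (\ref{ass7}) give $\n{K^2_r\phi}_{L^2(G\times S\times I)}\le \sqrt{M_1M_2}\,\n{\phi}_{L^2(G\times S\times I)}$, and the analogous bounds (\ref{ass7-sc}) applied to the kernel $|\partial_{x_j}\sigma^2|$ give, pointwise in $(x,E)$ and then via Fubini,
\[
\n{\widetilde K_{r,j}\psi}_{L^2(G\times S\times I)}\le \sqrt{M_1'M_2'}\,\n{\psi}_{L^2(G\times S\times I)}.
\]
Combining the two via $(a+b)^2\le 2a^2+2b^2$ yields
\[
\n{\partial_{x_j}(K^2_r\psi)}_{L^2(G\times S\times I)}^2
\le 2M_1'M_2'\,\n{\psi}_{L^2(G\times S\times I)}^2
+2M_1M_2\,\n{\partial_{x_j}\psi}_{L^2(G\times S\times I)}^2.
\]

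Summing over $j=1,2,3$ and adding $\n{K^2_r\psi}_{L^2(G\times S\times I)}^2\le M_1M_2\n{\psi}_{L^2(G\times S\times I)}^2$ produces
\[
\n{K^2_r\psi}_{H^{(1,0,0)}(G\times S\times I^\circ)}^2
\le (6M_1'M_2'+M_1M_2)\,\n{\psi}_{L^2(G\times S\times I)}^2
+2M_1M_2\sum_{j=1}^3\n{\partial_{x_j}\psi}_{L^2(G\times S\times I)}^2,
\]
and bounding both coefficients by $2(3M_1'M_2'+M_1M_2)$ collapses the right-hand side to $2(3M_1'M_2'+M_1M_2)\n{\psi}_{H^{(1,0,0)}(G\times S\times I^\circ)}^2$. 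Taking square roots and passing to the limit in the dense subspace $C^1(\ol G\times S\times I)$ yields the stated bound $\n{K^2_r}_1\le \sqrt{2}\sqrt{3M_1'M_2'+M_1M_2}$.

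The only genuinely delicate point is the justification of the product-rule identity for $\psi$ merely in $H^{(1,0,0)}$: one should verify it first for smooth $\psi$ (where dominated convergence makes differentiation under the integral routine) and then extend, using that both $K^2_r:L^2\to L^2$ and $\widetilde K_{r,j}:L^2\to L^2$ are bounded so that each of the three terms in the identity depends continuously on $\psi$ in $H^{(1,0,0)}$. Everything else is a routine combination of Schur bounds, Fubini, and the elementary inequality $(a+b)^2\le 2a^2+2b^2$.
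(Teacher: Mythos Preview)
Your proof is correct and follows essentially the same approach as the paper: differentiate under the integral, apply the Schur bound (\ref{k-norm-b}) once to the kernel $\sigma^2$ acting on $\partial_{x_j}\psi$ and once to the kernel $\partial_{x_j}\sigma^2$ acting on $\psi$, then combine via $(a+b)^2\le 2a^2+2b^2$ and sum over $j$. Your additional care with the density argument and the justification of the product-rule identity for general $\psi\in H^{(1,0,0)}$ is a point the paper leaves implicit.
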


\begin{proof}

We have for $\phi\in H^{(1,0,0)}(G\times S\times I^\circ)$
\[
&
(\partial_{x_j}(K^2_r\phi))(x,\omega,E)=
\int_{S'}(\partial_{x_j}\sigma^2)(x,\omega,\omega',E)\phi(x,\omega',E) d\omega'
\\
&
+
\int_{S'}\sigma^2(x,\omega,\omega',E)(\partial_{x_j}\phi)(x,\omega',E) d\omega'
\]
and so by Theorem \ref{esol-th1} and by (\ref{ass7-sc})
\be\label{sc-14}
\n{\partial_{x_j}(K^2_r\phi)}_{L^2(G\times S\times I)}
\leq 
\sqrt{M_1'M_2'}\n{\phi}_{L^2(G\times S\times I)}+\sqrt{M_1M_2}\n{\partial_{x_j}\phi}_{L^2(G\times S\times I)}
\ee
which implies that
\be\label{sc-15}
\n{K^2_r\phi}_{H^{(1,0,0)}(G\times S\times I^\circ)}
\leq 
\sqrt{2}\sqrt{3M_1'M_2'+M_1M_2}\n{\phi}_{H^{(1,0,0)}(G\times S\times I^\circ)}
\ee
since by (\ref{ass7}) $\n{K^2_r\phi}_{L^2(G\times S\times }
\leq \sqrt{M_1M_2})\n{\phi}_{L^2(G\times S\times I)}$.
This completes the proof.
\end{proof}

Note that $\phi_{|\Gamma_-}=0$ does not necessarily imply that $K^2_r\phi_{|\Gamma_-}=0$.
That is why, we additionally assume a technical criterion
\be\label{sc-16-a}
\sigma^2(y,\omega,.,E)=0\ {\rm a.e.}\ (y,\omega,E)\in \Gamma_-.
\ee
We notice that the trace $\sigma^2(y,\omega,.,E)$ is due to the Sobolev trace theorem well-defined since by (\ref{ass7}), (\ref{ass7-sc}) $\sigma^2\in W^{\infty.(1,0,0)}(G\times S\times I,L^1(S'))$.
This condition is not necessary for the below results (see Remark \ref{nn} below) but it simplifies treatises.
For the Rutherford cross section the condition is valid if, for example $\sigma_0\in H_0^1(G)$. The condition (\ref{sc-16-a}) guarantees that in the Sobolev sense $(K_r^2\psi)_{|\Gamma_-}=0$ for $\psi\in H^{(1,0,0)}(G\times S\times I^\circ)$ (we omit details).

Note that 
\be\label{imb} 
H^{(1,0,0)}(G\times S\times I^\circ)\subset W^2(G\times S\times I)
\ee
and the imbedding is continuous. Hence for any $u\in H^{(1,0,0)}(G\times S\times I^\circ)$ the inflow trace $\gamma_-(u)=u_{|\Gamma_-}$ is well-defined and $u_{|\Gamma_-}\in L^2_{\rm loc}(\Gamma_-,|\omega\cdot\nu| d\sigma d\omega dE)$ (recall section \ref{fs}).
Let
\[
H_{-,0}^{(1,0,0)}(G\times S\times I^\circ):=
\{u\in H^{(1,0,0)}(G\times S\times I^\circ)|\ u_{|\Gamma_-}=0\}.
\]
We equip $H_{-,0}^{(1,0,0)}(G\times S\times I^\circ)$ with the norm induced by
$H^{(1,0,0)}(G\times S\times I^\circ)$. Then the subspace 
$H_{-,0}^{(1,0,0)}(G\times S\times I^\circ)$ is  closed in $H^{(1,0,0)}(G\times S\times I^\circ)$ which can be verified as follows. Let $\{u_n\}$ be a sequence in 
$H_{-,0}^{(1,0,0)}(G\times S\times I^\circ)$ such that $u_n\to u$ in 
$H^{(1,0,0)}(G\times S\times I^\circ)$. Then by (\ref{imb}) 
$u_n\to u$ in 
$W^2(G\times S\times I)$ which implies that 
${u_n}_{|\Gamma_-}\to u_{|\Gamma_-}$ in $L^2_{\rm loc}(\Gamma_-,|\omega\cdot\nu| d\sigma d\omega dE)$ because $\gamma_-:W^2(G\times S\times I)\to L^2_{\rm loc}(\Gamma_-,|\omega\cdot\nu| d\sigma d\omega dE)$ is continuous. Since
${u_n}_{|\Gamma_-}=0,\ n\in \N$ we see that $u_{|\Gamma_-}=0$ and so $u\in 
H_{-,0}^{(1,0,0)}(G\times S\times I^\circ)$, as desired. Note that 
$\widetilde P_{C,0}^{-1}$ is a bounded operator from $H_{-,0}^{(1,0,0)}(G\times S\times I^\circ)$ into itself since for $h\in H_{-,0}^{(1,0,0)}(G\times S\times I^\circ)$ the boundary term of  (\ref{sc-5-a}) vanishes.

\begin{remark}\label{nn}
The below arguments are valid (instead of assuming of (\ref{sc-16-a})) also in the case where the norm $\n{K^2_r}_1$ of the operator $K^2_r:H_{-,0}^{(1,0,0)}(G\times S\times I^\circ)\to H^{(1,0,0)}(G\times S\times I^\circ)$ is small enough and that
\be\label{nn-1}
 L^2(\Gamma_-)\subset T^2_{m_{1,j}}(\Gamma_-).
\ee
Recall that (\ref{nn-1}) holds, for example for the ball $G=B(0,R)$ (Example \ref{rbpex1-ss}). We omit details but we mention that by the Sobolev trace theorem the trace
\[
H^{(1,0,0)}(G\times S\times I^\circ)\to L^2(\Gamma)
\]
is continuous
and so by using (\ref{nn-1}) the boundary term in (\ref{sc-5-a}) can be estimated above by a term
$C\n{h}_{H^{(1,0,0)}(G\times S\times I^\circ)}$ for some constant and all 
$h\in H_{-,0}^{(1,0,0)}(G\times S\times I^\circ)$.
\end{remark}

\begin{theorem}\label{sc-le2}
Suppose that the assumptions (\ref{ass7}), (\ref{ass7-sc}), (\ref{sc-16-a})  are valid and that $a>0$ and $\Sigma>0$ are constants. 
Furthermore, assume that 
\be\label{ashr1-a-sc}
g,\ {\p g{E}}\in T^2(\Gamma_-)\cap T^2_{m_{1,j}}(\Gamma_-), \ j=1,2,3,
\ee
\be\label{ashr1-b-sc} 
\nabla_{(\partial G)}(g),\ \nabla_{(\partial G)}\Big({\p g{E}}\Big)\in T^2(\Gamma_-)^2\cap T^2_{m_{1,j}}(\Gamma_-)^2,\ j=1,2,3
\ee
\be\label{ashr2-a-sc}
 f\in H^{(1,0,0)}(G\times S\times I^\circ), 
\ee
\be\label{ashr2-b-sc}
{ f}_{|\Gamma-}\in T^2(\Gamma_-)\cap T^2_{m_{1,j}}(\Gamma_-).
\ee
Finally, suppose that 
the compatibility condition (\ref{comp-dd}) holds.
Then any solution of the problem
\bea\label{wxr-1-a}
&
-a{\p \psi{E}}+\omega\cdot\nabla_x\psi+\Sigma\psi-K^2_r\psi=f\ {\rm in}\ G\times S\times I\nonumber\\
&
\psi_{|\Gamma_-}=g, \ \psi(.,.,E_m)=0
\eea
belongs to $H^{(1,0,0)}(G\times S\times I^\circ)$.
\end{theorem}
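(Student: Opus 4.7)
The plan is to apply the Neumann-series representation of section \ref{meta} in the $H^{(1,0,0)}$-framework. First I would make the substitution $\phi = e^{CE}\psi$ for a constant $C \geq 0$ to be chosen later. Since $a$ and $\Sigma$ are constants and $K_r^2$ involves no energy integration, one checks $K_{r,C} = K_r^2$; the data $f_C = e^{CE}f$, $g_C = e^{CE}g$ still satisfy the weighted-trace hypotheses of the theorem (with modified, but finite, constants). Decompose $\phi = u + L_- g_C$, so that $u \in D(\widetilde P_{C,0})$ must solve the fixed-point equation
\[
u = Q_C u + \widetilde P_{C,0}^{-1}\bar f_C, \qquad Q_C := \widetilde P_{C,0}^{-1}K_r^2, \qquad \bar f_C := f_C - (P_C - K_r^2)(L_- g_C).
\]

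Next I would verify that $Q_C$ restricts to a bounded endomorphism of the closed subspace $H_{-,0}^{(1,0,0)}(G\times S\times I^\circ)$: assumption (\ref{sc-16-a}) forces $(K_r^2 v)|_{\Gamma_-} = 0$ for any $v \in H^{(1,0,0)}$, and the explicit formula (\ref{sol-form}) shows $\widetilde P_{C,0}^{-1}$ produces a function vanishing on $\Gamma_-$ since $t(y,\omega) = 0$ there. Applying Corollary \ref{pinv-norm} to $h = K_r^2 v \in H_{-,0}^{(1,0,0)}$ (the boundary term is then absent) together with the estimate of Lemma \ref{sc-le3} yields an inequality of the form $\|Q_C\|_1 \leq M \bigl(a(aC+\Sigma)\bigr)^{-1/2}$ with $M$ independent of $C$. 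Choosing $C$ large enough that $\|Q_C\|_1 < 1$, the Neumann series
\[
u = \sum_{k=0}^\infty Q_C^k\bigl(\widetilde P_{C,0}^{-1}\bar f_C\bigr)
\]
converges in $H_{-,0}^{(1,0,0)}(G\times S\times I^\circ)$, provided $\widetilde P_{C,0}^{-1}\bar f_C$ lies there.

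For the last ingredient, expand, using $\omega\cdot\nabla_x(L_-g_C) = 0$ from (\ref{ocdn}),
\[
\bar f_C = f_C + a\,\partial_E(L_- g_C) - CaL_- g_C - \Sigma L_- g_C + K_r^2(L_- g_C).
\]
The hypotheses (\ref{ashr2-a-sc})-(\ref{ashr2-b-sc}) place $f_C$ in $H^{(1,0,0)}$ with the correct weighted trace, while (\ref{ashr1-a-sc})-(\ref{ashr1-b-sc}) on $g$ and $\partial_E g$ are exactly the conditions needed to control the lift in $H^{(1,0,0)}$ by repeating the Theorem \ref{foregth1} calculation: each $\partial_{x_j}L_- g_C$ generates a boundary contribution bounded by $\|\nabla_{(\partial G)}g\|_{T^2_{m_{1,j}}(\Gamma_-)^2}$, and $\partial_E L_- g_C(x,\omega,E) = (\partial_E g)(x-t(x,\omega)\omega,\omega,E)$ is handled by the same estimates with $\partial_E g$ replacing $g$. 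The compatibility condition (\ref{comp-dd}) gives $(L_- g_C)(\cdot,\cdot,E_m) = 0$, so the initial condition is preserved. Then $\bar f_C \in H^{(1,0,0)}$ with $\bar f_C|_{\Gamma_-} \in T^2_{m_{1,j}}(\Gamma_-)$, Corollary \ref{pinv-norm} applies, and the series yields $\phi = u + L_-g_C \in H^{(1,0,0)}(G\times S\times I^\circ)$, hence $\psi = e^{-CE}\phi \in H^{(1,0,0)}(G\times S\times I^\circ)$; uniqueness (Theorem \ref{cor-csdath3}) identifies this with any other solution of (\ref{wxr-1-a}). The hard part will be the bookkeeping for $\bar f_C$: each $\partial_{x_j}$-derivative of $L_- g_C$ produces a weight $\partial_{x_j}t$ whose $L^2$-integrability in $G\times S\times I$ against $|g|^2$ and $|\partial_E g|^2$ reduces, via Fubini (Lemma \ref{pr:fubini}) and the identity $t(y+s\omega,\omega) = s$, precisely to the weighted trace assumptions of the theorem, and one must track these weighted norms consistently through every term.
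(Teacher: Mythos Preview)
Your proposal is correct and follows essentially the same route as the paper: the exponential shift, the decomposition $\phi=u+L_-g_C$, the Neumann-series argument in $H_{-,0}^{(1,0,0)}$ using Lemma~\ref{sc-le3} and Corollary~\ref{pinv-norm} (with the vanishing boundary term courtesy of (\ref{sc-16-a})) to make $\|Q_C\|_1<1$, and the verification that $\bar f_C\in H^{(1,0,0)}$ with the right weighted traces. The paper organizes the bookkeeping slightly differently---it first isolates $L_-g_C\in H^{(1,0,0)}$ as a standalone step and then treats $\bar f_C$---but the ingredients, estimates, and logical flow are the same.
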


\begin{proof}
It suffices to show that (for $C$ large enough) the solution $\phi=u+L_-g_C$ of the problem (\ref{co3aa-dd}) belongs to $H^{(1,0,0)}(G\times S\times I^\circ)$
where $u$ is as in (\ref{15}).
Note that the conditions (\ref{ashr1-a-sc}), (\ref{ashr1-b-sc}), (\ref{comp-dd}) hold for $g_C$ if and only if they hold for $g$ and similarly (\ref{ashr2-a-sc}), 
(\ref{ashr2-b-sc}) hold for $f_C$ if and only if they hold for $f$.

A. 
For the first instance  we show that $L_-g_C\in H^{(1,0,0)}(G\times S\times I^\circ)$. 
We have for $(x,\omega,E)\in G\times S\times I$
\be\label{sc-19} 
{\p {(L_-g_C)}{x_j}}(x,\omega.E)=
\la (\nabla_{\partial G}g_C)(x-t(x,\omega)\omega,\omega,E),e_j-{\p t{x_j}}(x,\omega)\omega\ra
\ee
and so by Lemma \ref{pr:fubini} (and recalling that $t(y+s\omega)=s$)
\bea\label{sc-20}
&
\n{{\p {(L_-g_C)}{x_j}}}_{L^2(G\times S\times I)}^2
=
\int_S\int_I\int_G\big|\la (\nabla_{(\partial G)}g_C)(x-t(x,\omega)\omega,\omega,E),e_j-{\p t{x_j}}(x,\omega)\omega\ra\big|^2 dx d\omega dE\nonumber\\
&
=
\int_{\Gamma_-}\int_0^{\tau_-(y,\omega)}\big|\la (\nabla_{(\partial G)}g_C)(y,\omega,E),e_j-{\p t{x_j}}(y+s\omega,\omega)\omega\ra\big|^2|\omega\cdot\nu(y)| ds d\sigma(y) d\omega dE.
\eea
Since
\[
&
\big|\la (\nabla_{(\partial G)}g_C)(y,\omega,E),e_j-{\p t{x_j}}(y+s\omega,\omega)\omega\big)\ra\big|
\\
&
\leq
\n{(\nabla_{(\partial G)}g_C)(y,\omega,E)}
\n{e_j-{\p t{x_j}}(y+s\omega,\omega)\omega}
\leq 
\n{(\nabla_{(\partial G)}g_C)(y,\omega,E)}
(1+|{\p t{x_j}}(y+s\omega,\omega)|)
\]
we see by (\ref{sc-20}) and noting that  $\tau_-(y,\omega)\leq d$
\bea\label{sc-21-a}
&
\n{{\p {(L_-g_C)}{x_j}}}_{L^2(G\times S\times I)}^2\nonumber\\
&
\leq 
\int_{\Gamma_-}\int_0^{\tau_-(y,\omega)}\big(\n{(\nabla_{(\partial G)}g_C)(y,\omega,E)}
(1+|{\p t{x_j}}(y+s\omega,\omega)|)\big)^2|\omega\cdot\nu(y)|ds d\sigma(y) d\omega dE
\nonumber\\
&
\leq
2\big(d\n{\nabla_{(\partial G)}g_C}_{T^2(\Gamma_-)^2}^2+
\n{\nabla_{(\partial G)}g_C}_{T_{m_{1,j}}^2(\Gamma_-)^2}^2\big).
\eea
Hence in virtue of (\ref{ashr1-b-sc}) 
\be\label{l-g}
L_-g_C\in H^{(1,0,0)}(G\times S\times I^\circ).
\ee

B.1. 
By  Lemma \ref{sc-le3}  $K^2_r\phi\in H^{(1,0,0)}(G\times S\times I^\circ)$ for
$\phi\in H^{(1,0,0)}(G\times S\times I^\circ)$ and
by the assumption   (\ref{sc-16-a}) $K^2_r\phi_{|\Gamma_-}=0$ for 
$\phi\in H^{(1,0,0)}(G\times S\times I^\circ)$. 
Hence $K^2_r$ is  a bounded operator $H_{-,0}^{(1,0,0)}(G\times S\times I^\circ) \to H_{-,0}^{(1,0,0)}(G\times S\times I^\circ)$.
This implies by Corollary \ref{pinv-norm} that for any $C\geq 0$ the operator
$\widetilde P_{C,0}^{-1}K^2_{r}$ is a bounded operator $H_{-,0}^{(1,0,0)}(G\times S\times I^\circ) \to H_{-,0}^{(1,0,0)}(G\times S\times I^\circ)$ and its norm,
denoted by $\n{\widetilde P_{C,0}^{-1}K^2_{r}}_1$, obeys
\be\label{sc-12-b}
\n{\widetilde P_{C,0}^{-1}K^2_{r}}_1\leq  
\sqrt{{{ad+m(I)}\over{a(aC+\Sigma)}}}\n{K^2_{r}}_1
\ee
where we observed that for $h=K^2_{r}\phi\in H_{-,0}^{(1,0,0)}(G\times S\times I^\circ)$ the boundary term vanishes in (\ref{sc-5-a}).
Choosing $C$ large enough we find that
\be\label{sc-18}
\n{\widetilde P_{C,0}^{-1}K^2_{r}}_1<1.
\ee
By now on we fix $C$ such that (\ref{sc-18}) holds.

B.2. We show that
\be\label{uu}
\widetilde{ P}_{C,0}^{-1}\ol f_C\in H_{-,0}^{(1,0,0)}(G\times S\times I^\circ).
\ee
Recall that (since $\omega\cdot\nabla_x(L_-g_C)=0$ and $K_{r,C}\phi=K_r^2\phi$)
\[
\ol f_C=f_C-(P_C-K^2_{r})(L_-g_C)=f_C+a{\p {(L_-g_C)}{E}}-(Ca+\Sigma)(L_-g_C)+K^2_{r}(L_-g_C)
\]
and so 
\[
\partial_{x_j}( {\ol f_C})=\partial_{x_j}( f_C)+a\partial_{x_j}\big({\p {(L_-g_C)}{E}}\big)-(Ca+\Sigma)\partial_{x_j}(L_-g_C)
+(\partial_{x_j}K^2_{r})(L_-g_C)+K^2_{r}(\partial_{x_j}(L_-g_C))
\]
where we used the proof of Lemma \ref{sc-le3} for $\partial_{x_j}(K^2_{r}(L_-g_C))$ and where as above
\[
((\partial_{x_j}K^2_r)\phi)(x,\omega,E)=
\int_{S'}(\partial_{x_j}\sigma^2)(x,\omega,\omega',E)\phi(x,\omega',E) d\omega'.
\]

Since by (\ref{ashr1-b-sc})
\[
\nabla_{(\partial G)}\Big({\p {g_C}{E}}\Big)\in T^2(\Gamma_-)^2\cap T^2_{m_{1,j}}(\Gamma_-)^2,\ j=1,2,3.
\]
and since (note that and $\partial_E(L_-g_C)=L_-(\partial_Eg_C)$)
\[
\partial_{x_j}\big({\p {(L_-g_C)}{E}}\big)
=
\la\nabla_{(\partial G)}\Big({\p {g_C}{E}}\Big)(x-t(x,\omega)\omega,\omega,E),e_j-{\p t{x_j}}(x,\omega)\omega\ra,
\]
we see as in Part A   that $\partial_{x_j}\big({\p {(L_-g_C)}{E}}\big)\in L^2(G\times S\times I)$. 
In virtue of  Part A $\partial_{x_j}\big(L_-g_C\big)\in L^2(G\times S\times I)$ and so by  Lemma \ref{sc-le3}   $(\partial_{x_j}K^2_{r})(L_-g_C)+K^2_{r}(\partial_{x_j}(L_-g_C))\in L^2(G\times S\times I)$. Hence $\partial_{x_j}(\ol f_C)\in L^2(G\times S\times I),\ j=1,2,3$ and then (since $f_C\in H^{(1,0,0)}(G\times S\times I^\circ)$)
\[
\ol f_C\in H^{(1,0,0)}(G\times S\times I^\circ).
\]
In addition, we see that
\[
{\ol f_C}_{\big|\Gamma_-}={f_C}_{\big|\Gamma_-}+a{\p {(L_-g_C)}{E}}_{\Big|\Gamma_-}-(Ca+\Sigma)(L_-g_C)_{|\Gamma_-}+K^2_{r}(L_-g_C)_{\big|\Gamma_-}
\]
where by (\ref{sc-16-a}) $K^2_{r}(L_-g_C)_{|\Gamma_-}=0$. In addition, by (\ref{ashr1-a-sc})
\[
g_C,\ {\p {(L_-g_C)}{E}}_{\Big|\Gamma_-}={\p {g_C}{E}}\in T^2(\Gamma_-)\cap T_{m_{1,j}}^2(\Gamma_-).
\]
Hence 
\[
{\ol f_C}_{|\Gamma_-}= {f_C}_{|\Gamma_-}+a{\p {g_C}{E}}-(Ca+\Sigma)g_C\in T^2(\Gamma_-)
\cap T_{m_{1,j}}^2(\Gamma_-).
\]
Since $(\widetilde{ P}_{C,0}^{-1}\ol f_C)_{\big|\Gamma_-}=0$ we conclude in due to Corollary \ref{pinv-norm}  that (\ref{uu}) holds.

B.3.
That is why, by (\ref{sc-18}) the series (recall (\ref{15}))
\[
u=\sum_{k=0}^\infty
(\widetilde{ P}_{C,0}^{-1}K^2_{r})^k(\widetilde{ P}_{C,0}^{-1}\ol f_C)
\]
is converging in  $H_{-,0}^{(1,0,0)}(G\times S\times I^\circ)$ 
This shows that 
\be\label{phi-reg}
u\in
H_{-,0}^{(1,0,0)}(G\times S\times I^\circ). 
\ee

Since $\phi=u+L_-g_C$ we  see by  (\ref{phi-reg}) and (\ref{l-g}) that $\phi\in H^{(1,0,0)}(G\times S\times I^\circ)$ and so also
$\psi=e^{-CE}\phi\in H^{(1,0,0)}(G\times S\times I^\circ)$. This completes the proof.

\end{proof}

We give the following remarks.

\begin{remark}\label{adjoint-pr}
Firstly, we yield some notes concerning for the related \emph{adjoint problem.}
Consider the problem
\be\label{pr-1}
T\psi=f,\ \psi_{\Gamma_-}=g,\ \psi(.,.,E_m)=0
\ee
where $f\in L^2(G\times S\times I),\ g\in T^2(\Gamma_-)$ and
\[
T\psi=
a{\p \psi{E}}
+ c\Delta_S\psi
+d\cdot\nabla_S\psi
+\omega\cdot\nabla_x\psi+\Sigma\psi-K_{r}\psi.
\]
Denote $d(x,\omega,E,\partial_\omega)=d\cdot\nabla_S$.

Suppose that the assumptions of Theorem \ref{cor-csdath3} are valid.
By the considerations given in \cite[section 6]{tervo18-up}, \cite[section 6]{tervo19}  
 the variational formulation of the problem (\ref{pr-1}) is
\be\label{var-eq}
\widetilde B(\psi,v)=\la  f,v\ra_{L^2(G\times S\times I)}+\la g,v\ra_{T^2(\Gamma_-)}
\ee
where $\widetilde B(.,.)$ is (cf. \cite[section 6]{tervo19})
\[
&
\widetilde B(\psi,v)=
-\la \psi,{\p {(av)}{E}}\ra_{L^2(G\times S\times I)} 
-
\la a(.,E_0) p_0,v(.,.,E_0)\ra_{L^2(G\times S)}\nonumber\\
& 
-\la \nabla_S\psi,c
\nabla_Sv\ra_{L^2(G\times S\times I)}
+
\la\psi,
d^*(x,\omega,E,\partial_\omega)v\ra_{L^2(G\times S\times I)}\nonumber\\
&
-\la\psi,\omega\cdot\nabla_xv\ra_{L^2(G\times S\times I)}
+\la q_{|\Gamma_+},\gamma_+(v)\ra_{T^2(\Gamma_+)}
\nonumber\\
&
+
\la \psi, \Sigma^* v -K_{r}^*v\ra_{L^2(G\times S\times I)}.
\]
Here $d^*(x,\omega,E,\partial_\omega)=-d\cdot\nabla_S-{\rm div}_Sd,\ \Sigma^*=\Sigma,\ K_r^*$ are the (formal) adjoints of $d(x,\omega,E,\partial_\omega),\ \Sigma$ and $K_r$, respectively and $p_0\in L^2(G\times S),\ q\in T^2(\Gamma)$ are are as in \cite[section  2.1]{tervo18-up}.
We have by \cite[Theorem 6.6]{tervo19}
\be\label{es-1}
\widetilde B(\psi,\psi)\geq c'\n{\psi}_{\mc H}^2
\ee
where $c'>0$ and where the norm $\n{\cdot}_{\mc H}^2$ is as in section \ref{ex-i-i-b-p}.

Let 
\[
T'\psi^*=
-{\p {(a\psi^*)}{E}}
+ c\Delta_S\psi^*
-d\cdot\nabla_S\psi^*-({\rm div}_Sd)\ \psi^*
-\omega\cdot\nabla_x\psi^*+\Sigma\ \psi^*-K^*_{r}\psi^*
\]
be the formal adjoint of $T$. The \emph{adjoint problem} is
\be\label{pr-1-b}
T'\psi^*=f^*,\ \psi^*_{\Gamma_+}=g^*,\ \psi^*(.,.,E_0)=0
\ee
where $f^*\in L^2(G\times S\times I),\ g^*\in T^2(\Gamma_+)$.
The variational formulation of the adjoint problem (\ref{pr-1-b}) is
\be\label{var-eq-a}
\widetilde B^*(\psi^*,v)=\la  f^*,v\ra_{L^2(G\times S\times I)}+\la g^*,v\ra_{T^2(\Gamma_+)}
\ee
where $\widetilde B^*(.,.)$ is 
\[
&
\widetilde B(\psi^*,v)=
\la \psi^*,a {\p v{E}}\ra_{L^2(G\times S\times I)} 
-
\la  p_m^*,a(.,E_m)v(.,.,E_m)\ra_{L^2(G\times S)}\nonumber\\
& 
-\la \nabla_S\psi^*,c
\nabla_Sv\ra_{L^2(G\times S\times I)}
+
\la\psi^*,
d(x,\omega,E,\partial_\omega)v\ra_{L^2(G\times S\times I)}\nonumber\\
&
+\la\psi^*,\omega\cdot\nabla_xv\ra_{L^2(G\times S\times I)}
+\la q^*_{|\Gamma_-},\gamma_-(v)\ra_{T^2(\Gamma_-)}
+
\la \psi^*, \Sigma v -K_{r}v\ra_{L^2(G\times S\times I)}.
\]
Here  $p_m^*\in L^2(G\times S),\ q^*\in T^2(\Gamma)$ are again as in \cite[section  2.1]{tervo18-up}.
Analogously to (\ref{es-1}) one sees that
\be\label{es-1-a}
\widetilde B^*(\psi^*,\psi^*)\geq c'\n{\psi^*}_{\mc H}^2.
\ee
 
The adjoint problem  satisfies analogous existence and regularity results as obtained above for the problem $T\psi=f,\ \psi_{|\Gamma_-}=g, \ \psi(.,.,E_m)=0$. We leave the formulations.
However, we remark that the adjoints obey (this is not clear but it is based on the maximal dissipativity considerations like \cite[section 3.3]{tervo17})
\be\label{a-e-1}
(\widetilde T_{0,-})^*=T_{0,-}^*=\widetilde T'_{0,+}
\ee
where $\widetilde{T}_{0,-}:L^2(G\times S\times I)\to L^2(G\times S\times I)$ and
$\widetilde{T'}_{0,+}:L^2(G\times S\times I)\to L^2(G\times S\times I)$ are the smallest closed extensions of 
the operators defined, respectively by 
\bea
D({ T}_{0,-})&:=\{\psi\in \widetilde W^2(G\times S\times I)\cap H^1(I,L^2(G\times S))\cap H^2(S,L^2(G\times I))\ |
\psi_{|\Gamma_-}=0,\ \psi(\cdot,\cdot,E_{m})=0\},\nonumber\\
{ T}_{0,-}v&:=Tv.
\eea
\bea
D({ T'}_{0,+})&:=\{v\in \widetilde W^2(G\times S\times I)\cap H^1(I,L^2(G\times S))\cap H^2(S,L^2(G\times I))\ |
v_{|\Gamma_+}=0,\ v(\cdot,\cdot,E_{0})=0\},\nonumber\\
{ T'}_{0,+}v&:=T'v.
\eea
Here $T'$ is the formal adjoint of $T$ given above.
The relation (\ref{a-e-1}) is useful in further analysis of the adjoint problem.
\end{remark}

\begin{remark}\label{remarks-1}

With the same kind of techniques as above we can show regularity results with respect to $E$ and $\omega$ variables. For example, we have:

Suppose  that $a>0$ and $\Sigma>0$ are constants and that the assumptions (\ref{ass7}) and
\bea\label{ass7-sc-a}
&\int_{S'}|(\partial_{E}\sigma^2)(x,\omega',\omega,E)|d\omega'\leq M_1''\ {\rm a.e.},\nonumber\\
&\int_{S'}|(\partial_{E}\sigma^2)(x,\omega,\omega',E)| d\omega'\leq M_2''\ {\rm a.e.}
\eea
are valid for $\sigma^2(x,\omega,\omega',E)$.
Furthermore, suppose that
\be\label{fr-16}
f\in H^{(0,0,1)}(G\times S\times I^\circ).
\ee
Then the solution of the problem
\be\label{wxr-1-a}
-a{\p u{E}}+\omega\cdot\nabla_xu+\Sigma u-K^2_ru=f,\
u_{|\Gamma_-}=0, \ u(.,.,E_m)=0
\ee
belongs to $H^{(0,0,1)}(G\times S\times I^\circ)$.

\begin{proof}

The proof is based on the fact that  $\widetilde P_{C,0}^{-1}$ is a bounded operator 
$H^{(0,0,1)}(G\times S\times I^\circ)\to H^{(0,0,1)}(G\times S\times I^\circ)$ and its norm (between the spaces in question) obeys
\be\label{nor}
\n{\widetilde P_{C,0}^{-1}}\leq {{C'}\over{aC+\Sigma}}.
\ee
Moreover, the assumptions (\ref{ass7}), (\ref{ass7-sc-a}) imply that 
$K^2_{r}$ is a bounded operator $H^{(0,0,1)}(G\times S\times I^\circ) 
\to H^{(0,0,1)}(G\times S\times I^\circ)$ and its norm satisfies
\be\label{sc-12-a}
\n{K^2_{r}}_1\leq \sqrt{2}\big(\sqrt{3M_1''M_2''+M_1M_2}\big).
\ee
We omit details.

\end{proof}

\end{remark}

\begin{remark}\label{re-comp2}
Consider the problem given in Theorem \ref{sc-le2}. Suppose additionally that $K^2_r=0$ and that $f\in H^{(0,01)}(G\times S\times I^\circ)$. Then
\[
\ol f_C=f_C+a{\p {(L_-g_C)}{E}}-(Ca+\Sigma)(L_-g_C).
\]
and
\[
-a{\p u{E}}+\omega\cdot\nabla_xu+Cau+\Sigma u=\ol f_C
\]
where  as above $u=\widetilde P_{C,0}^{-1}\ol f_C$. 
Similarly as in (\ref{sc-3}), (\ref{sc-4}) we have for $t(x,\omega)<\eta(E)$
\be\label{sc-3-a}
{\p u{E}}(x,\omega,E)=\int_0^{t(x,\omega)}
e^{-(aC+\Sigma) s} {\p {\ol f_C}{E}}\big(x-s\omega,\omega,E+as\big)ds\nonumber\\
\ee
and for $t(x,\omega)>\eta(E)$
\bea\label{sc-4-a}
&
{\p u{E}}(x,\omega,E)=\int_0^{\eta(E)}
e^{-(aC+\Sigma) s} {\p {\ol f_C}{E}}\big(x-s\omega,\omega,E+as\big)ds\nonumber\\
&
+
e^{-(aC+\Sigma)\eta(E)} \ol f_C\big(x-\eta(E)\omega,\omega,E+a\eta(E))\big)(-{1\over a})
\eea
since ${\p \eta{E}}(E)=-{1\over a}$.

In order that ${\p u{E}}$ is continuous on the surface $t(x,\omega)=\eta(E)$, we must require that 
\[
&
e^{-(aC+\Sigma)\eta(E)} \ol f_C\big(x-t(x,\omega)\omega,\omega,E+a\eta(E))\big)(-{1\over a})
=0
\]
which reduces to 
\[
\ol f_C\big(x-t(x,\omega)\omega,\omega,E_m)=0
\]
since $E+a\eta(E)=E_m$.
That is why, we must impose the compatibility condition
\[
\ol f_C(y,\omega,E_m)=0, \ (y,\omega)\in \Gamma_-'
\]
that is, 
\be\label{sc-5-aa}
f_C(y,\omega,E_m)+a{\p {g_C}{E}}(y,\omega,E_m)-(Ca+\Sigma)g_C(y,\omega,E_m)=0
\ee
since  $(L_-g_C)_{|\Gamma_-}=g_C$.
Assuming that the zero order compatibility condition $g_C(.,.,E_m)=0$ holds  the condition (\ref{sc-5-aa}) reduces to
\[
f_C(y,\omega,E_m)+a{\p {g_C}{E}}(y,\omega,E_m)=0.
\]
Hence the compatibility condition (\ref{csda-ex9-a}) is necessary. These notes do not deny the $H^1$-regularity with respect to $E$-variable since $H^1$-regularity allows ${\p u{E}}$ to be discontinuous. 

\end{remark}

\subsection{A note on regularity based on the smoothing property of restricted collision operator}\label{smoothingK}

The so called smoothing property of restricted collision operator can be applied in regularity analysis. We outline shortly the idea below.

Let $m\in\N_0$, $0<\kappa<1$ and let $s:=m+\kappa$. Recall that for  an open set $G\subset\R^3$  
\[
\n{u}_{H^{s}(G)}^2=\n{u}_{H^{m}(G)}^2
+
\sum_{|\alpha|=m}\int_G\int_G{{|(\partial^\alpha u)(x)-(\partial^\alpha u)(y)|^2}\over{|x-y|^{3+2\kappa}}}dx dy.
\]
The space $H^{(s,0,0)}(G\times S\times I^\circ)$ is the subspace of 
$L^2(G\times S\times I)$ for which
\bea\label{fr-0}
&
\n{\psi}_{H^{(s,0,0)}(G\times S\times I^\circ)}^2:=\n{\psi}_{H^{(m,0,0)}(G\times S\times I^\circ)}^2
\nonumber\\
&
+
\sum_{|\alpha|=m}
\int_I\int_S\int_G\int_{G}{{|(\partial_x^\alpha\psi)(x,\omega,E)-(\partial_x^\alpha\psi)(y,\omega,E)|^2}\over{|x-y|^{3+2\kappa}}}dx dy d\omega dE<\infty
\eea 
that is, $H^{(s,0,0)}(G\times S\times I^\circ)=L^2(S\times I,H^s(G))$.
As is well known in the case where $G=\R^3$ the spaces $H^{(s,0,0)}(G\times S\times I^\circ)$ can be characterized by using the partial Fourier transform with respect to $x$-variable.

Define a  linear space
\bea
&
{\s H}^{(s,0,0)}(G\times S\times I^\circ)\nonumber\\
&
:=\{\psi\in H^{(s,0,0)}G\times S\times I^\circ)
|\ \omega\cdot\nabla_x\psi\in H^{(s,0,0)}G\times S\times I^\circ)\}\nonumber
\eea
equipped with the norm 
\[
\n{\psi}_{{\s H}^{s}(G\times S\times I^\circ)}^2:=
\n{\psi}_{H^{(s,0,0)}(G\times S\times I^\circ)}^2
+
\n{\omega\cdot\nabla\psi}_{H^{(s,0,0)}(G\times S\times I^\circ)}^2.
\]
Note that ${\s H}^{0}(G\times S\times I^\circ)= W^{2}(G\times S\times I)$.
Furthermore, define the {\it moment operator}  $M:L^2(G\times S\times I)\to L^2(G)$ by
\be\label{spco1a}
({\mc M}\psi)(x):=\int_{S\times I}\psi(x,\omega,E) d\omega dE.
\ee
We have the following {\it moment (or velocity averaging) lemma}.

\begin{theorem}\label{spcoth1}
A. Suppose that $G=\R^3$ and let $s\geq 1$.  
Then ${\mc M}:{\s H}^{(s-1)/2,0,0)}(\R^3\times S\times I^\circ)\to H^{s/2}(\R^3)$ is bounded.

B. Suppose that $G$ is {\it convex} and bounded.
Then ${\mc M}:\widetilde W^{2}(G\times S\times I)\to H^{1/2}(G)$ is bounded.
\end{theorem}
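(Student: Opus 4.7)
Part A proceeds via partial Fourier analysis in $x$. Writing $\hat\psi(\xi,\omega,E):=(\mc F_x\psi)(\xi,\omega,E)$, the identity $\widehat{\omega\cdot\nabla_x\psi}=i(\omega\cdot\xi)\hat\psi$ provides, for each fixed $\xi\neq 0$ and each $\delta\in(0,1)$, the splitting
\[
\widehat{\mc M\psi}(\xi)=\int_I\!\int_{A_\delta(\xi)}\!\hat\psi\,d\omega\,dE+\int_I\!\int_{S\setminus A_\delta(\xi)}\!\frac{\widehat{\omega\cdot\nabla_x\psi}}{i\,\omega\cdot\xi}\,d\omega\,dE,\qquad A_\delta(\xi):=\{\omega\in S:|\omega\cdot\xi|<\delta|\xi|\}.
\]
Since $A_\delta(\xi)$ has surface measure $\sim\delta$ on $S_2$ and since $\int_{S\setminus A_\delta(\xi)}|\omega\cdot\xi|^{-2}d\omega\leq C(\delta|\xi|^2)^{-1}$ (by integration in the polar variable $\mu=\omega\cdot\xi/|\xi|$), Cauchy--Schwarz on each of the two pieces yields
\[
|\widehat{\mc M\psi}(\xi)|^2\;\leq\;C\Bigl(\delta\|\hat\psi(\xi,\cdot,\cdot)\|_{L^2(S\times I)}^2+\frac{1}{\delta|\xi|^2}\|\widehat{\omega\cdot\nabla_x\psi}(\xi,\cdot,\cdot)\|_{L^2(S\times I)}^2\Bigr).
\]
Minimising the right-hand side in $\delta$ for each $\xi$ then produces the pointwise bound $|\widehat{\mc M\psi}(\xi)|^2\leq C|\xi|^{-1}\bigl(\|\hat\psi(\xi,\cdot,\cdot)\|_{L^2(S\times I)}^2+\|\widehat{\omega\cdot\nabla_x\psi}(\xi,\cdot,\cdot)\|_{L^2(S\times I)}^2\bigr)$.

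Multiplying the pointwise inequality by $|\xi|^{s}$, integrating over $\R^3$ and applying Plancherel in $x$ (separately for each $(\omega,E)$) gives
\[
\int_{\R^3}|\xi|^{s}|\widehat{\mc M\psi}(\xi)|^2\,d\xi\;\leq\;C\bigl(\|\psi\|^2_{H^{((s-1)/2,0,0)}}+\|\omega\cdot\nabla_x\psi\|^2_{H^{((s-1)/2,0,0)}}\bigr)=C\|\psi\|^2_{\s H^{(s-1)/2}},
\]
where the hypothesis $s\geq 1$ is used to ensure $|\xi|^{s-1}\leq(1+|\xi|^2)^{(s-1)/2}$. The low-frequency contribution $\int_{|\xi|\leq 1}|\widehat{\mc M\psi}|^2\,d\xi$ is controlled by $C\|\psi\|^2_{L^2}$ via the trivial bound $|\widehat{\mc M\psi}(\xi)|\leq\sqrt{|S||I|}\,\|\hat\psi(\xi,\cdot,\cdot)\|_{L^2(S\times I)}$ and Plancherel. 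Using $(1+|\xi|^2)^{s/2}\leq 1+|\xi|^s$ (up to a constant), the two estimates combine to give $\|\mc M\psi\|_{H^{s/2}(\R^3)}\leq C\|\psi\|_{\s H^{(s-1)/2}}$, completing Part A.

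Part B reduces to the $s=1$ case of Part A by constructing a bounded extension $\tilde\psi\in W^2(\R^3\times S\times I)$ of $\psi\in\widetilde W^2(G\times S\times I)$, with compact support in $x$ and $\tilde\psi|_{G\times S\times I}=\psi$. The construction uses the characteristic decomposition $\psi=\psi_f+L_-g$ valid on $G$, where $f:=\omega\cdot\nabla_x\psi\in L^2(G\times S\times I)$, $g:=\gamma_-(\psi)\in T^2(\Gamma_-)$, and
\[
\psi_f(x,\omega,E)=\int_0^{t(x,\omega)}f(x-s\omega,\omega,E)\,ds,\qquad (L_-g)(x,\omega,E)=g(x-t(x,\omega)\omega,\omega,E).
\]
Fix $\chi\in C^\infty_c(\R^3)$ with $\chi\equiv 1$ on $\bar G$. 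Extending $f$ by zero to $\tilde f\in L^2(\R^3\times S\times I)$, set $\tilde\psi_f(x,\omega,E):=\chi(x)\int_0^\infty\tilde f(x-s\omega,\omega,E)\,ds$; convexity of $G$ forces $\tilde\psi_f|_G=\psi_f$, while Lemma \ref{pr:fubini} together with the bound $s\leq\mathrm{diam}(G)$ on the effective support of the integrand shows $\tilde\psi_f,\,\omega\cdot\nabla_x\tilde\psi_f\in L^2(\R^3\times S\times I)$. For $L_-g$ one extends $g$ by zero to $\Gamma$, then propagates along rays: for $x\in\R^3$ such that the line $\{x-s\omega:s\in\R\}$ meets $G$ in an interval (with inflow endpoint $y\in\Gamma_-'$), set the pre-cutoff extension equal to $g(y,\omega,E)$, otherwise zero, and multiply by $\chi$. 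Applying Part A with $s=1$ to the resulting $\tilde\psi$ and restricting to $G$ then delivers $\mc M\psi\in H^{1/2}(G)$ with the required estimate.

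The principal obstacle lies in the $L_-g$ component: since it is constant along characteristics inside $G$, it admits no intrinsic decay in $x$, so the smooth cutoff $\chi$ inevitably contributes a boundary-like error $(\omega\cdot\nabla_x\chi)\widetilde{L_-g}$ to $\omega\cdot\nabla_x\tilde\psi$. The $L^2$-norm of this error is estimated by passing the resulting integral over $\supp(\nabla_x\chi)\times S\times I$ to the inflow parametrisation $(y,\omega,t)\in\Gamma_-\times[0,\tau_-(y,\omega)]$ of Lemma \ref{pr:fubini}; the bound $\tau_-\leq\mathrm{diam}(G)$ together with the membership $\gamma(\psi)\in T^2(\Gamma)$ built into $\widetilde W^2(G\times S\times I)$ then produces the required control. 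Convexity of $G$ is indispensable throughout: it ensures that each line meets $G$ in a single interval, so that the characteristic-constant extension of $g$ is unambiguously defined and the identifications $\tilde\psi_f|_G=\psi_f$, $\widetilde{L_-g}|_G=L_-g$ hold without correction terms.
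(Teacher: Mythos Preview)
Your argument is correct and matches the approach the paper points to: Part A is precisely the classical Golse--Lions--Perthame--Sentis splitting, and Part B follows the standard route of extending to $\R^3$ and invoking the $s=1$ case (this is also the technique in the cited Chen et al.\ reference). One small imprecision worth noting: in your final paragraph you invoke the inflow parametrisation $(y,\omega,t)\in\Gamma_-\times[0,\tau_-(y,\omega)]$ of Lemma~\ref{pr:fubini} to control $(\omega\cdot\nabla_x\chi)\widetilde{L_-g}$, but that parametrisation covers only $G$, whereas $\supp(\nabla_x\chi)$ lies in $\R^3\setminus\ol G$. The fix is immediate---the same change of variables $x=y+t\omega$, $(y,\omega)\in\Gamma_-'$, works on all of $\R^3$ (with Jacobian $|\omega\cdot\nu(y)|$), and for each $(y,\omega)$ the $t$-integral $\int_\R|\nabla_x\chi(y+t\omega)|^2\,dt$ is bounded by a constant depending only on $\chi$, yielding the desired control by $\n{g}_{T^2(\Gamma_-)}^2$.
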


\begin{proof} 
The claim follows by an adaptation of the proof given in \cite{golse} (cf. also
techniques of \cite[the proof of Lemma 1.2]{chen20}).
\end{proof}

We formulate the next corollary concerning  the restricted collision operator (below the primes in $S'$ and $I'$ refer to their arguments $\omega'$ and $E'$, respectively). For simplicity, we assume that the restricted collision operator $K_r$ is of the form $K_r=K_r^1$ but the claims of the below Corollary \ref{spcocor2} are valid also when $K_r$ is of the form $K_r=K_r^2$.

\begin{corollary}\label{spcocor2}
A. Suppose that $s\geq 1$ and that $\sigma^1:\R^3\times S'\times S\times I'\times I\to\R$ is a measurable function   such that 
\bea\label{spco18}
&
\sigma^1\in L^2(S\times I,W^{\infty,((s-1)/2,0,0)}(\R^3\times S'\times {I'}^\circ))
\nonumber\\
&
\omega'\cdot\nabla_x\sigma^1\in L^2(S\times I,W^{\infty,((s-1)/2,0,0)}(\R^3\times S'\times {I'}^\circ)).
\eea
Then $K_r^1:{\s H}^{((s-1)/2,0,0)}(\R^3\times S\times I^\circ)\to H^{(s/2,0,0)}(\R^3\times S\times I^\circ)$ given
by
\[
(K_r^1\psi)(x,\omega,E)=
\int_{S'\times I'}\sigma^1(x,\omega',\omega,E',E)\psi(x,\omega',E')d\omega' dE'
\]
is bounded.

B. Suppose that $G$ is convex and bounded. Furthermore, suppose that $\sigma^1: G\times S'\times S\times I'\times I\to\R$ is a measurable   function
and that
\bea\label{spco13}
&
\sigma^1\in L^2(S\times I,L^\infty(G\times S'\times I'))
\nonumber\\
&
\omega'\cdot\nabla_x\sigma^1\in L^2(S\times I,L^\infty(G\times S'\times I'))
\nonumber\\
&
\sigma^1\in L^2(S\times I,L^\infty(\Gamma'\times I'))
\eea
(here $\Gamma'=(\partial G)\times S'$).
Then $K_r^1:\widetilde W^2(G\times S\times I)\to H^{(1/2,0,0)}(G\times S\times I)$ is bounded. 
\end{corollary}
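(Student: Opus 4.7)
The strategy for both parts is to write $K_r^1$ as the composition of the moment operator $\mc M$ with pointwise multiplication by $\sigma^1$. Fixing $(\omega, E) \in S \times I$ as parameters and setting
\[
\Psi_{\omega, E}(x, \omega', E') := \sigma^1(x, \omega', \omega, E', E)\, \psi(x, \omega', E'),
\]
one has $(K_r^1 \psi)(x, \omega, E) = (\mc M \Psi_{\omega, E})(x)$. Theorem \ref{spcoth1} can then be applied fibrewise in $(\omega, E)$, and the hypotheses \eqref{spco18} for Part A (resp.\ \eqref{spco13} for Part B) are designed precisely so that the resulting pointwise bound squares and integrates over $(\omega, E)$. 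Measurability of $(\omega, E) \mapsto \Psi_{\omega, E}$ as a Banach-valued map is routine from Fubini and the measurability of $\sigma^1$.

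For Part A, the main step is the pointwise estimate
\[
\|\Psi_{\omega, E}\|_{\s H^{((s-1)/2,0,0)}(\R^3 \times S' \times (I')^\circ)} \leq C_{\omega, E}\, \|\psi\|_{\s H^{((s-1)/2,0,0)}(\R^3 \times S \times I^\circ)},
\]
where
\[
C_{\omega, E} := C \bigl( \|\sigma^1(\cdot, \cdot, \omega, \cdot, E)\|_{W^\infty} + \|(\omega' \cdot \nabla_x \sigma^1)(\cdot, \cdot, \omega, \cdot, E)\|_{W^\infty} \bigr)
\]
and $W^\infty$ abbreviates $W^{\infty, ((s-1)/2, 0, 0)}(\R^3 \times S' \times (I')^\circ)$. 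The bound on the $H^{((s-1)/2,0,0)}$ factor comes from the standard multiplication estimate that $W^{\infty, s}(\R^3)$ acts boundedly on $H^s(\R^3)$, applied fibrewise in $(\omega', E')$; the transport-derivative norm is then handled by the product rule
\[
\omega' \cdot \nabla_x (\sigma^1 \psi) = (\omega' \cdot \nabla_x \sigma^1) \psi + \sigma^1 (\omega' \cdot \nabla_x \psi)
\]
together with the same multiplication estimate. Theorem \ref{spcoth1}A now yields $\|(K_r^1\psi)(\cdot, \omega, E)\|_{H^{s/2}(\R^3)} \leq C' C_{\omega, E} \|\psi\|_{\s H^{((s-1)/2,0,0)}}$; squaring and integrating over $(\omega, E)$ against \eqref{spco18} completes Part A.

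Part B uses the identical reduction, with Theorem \ref{spcoth1}B in place of Part A; here only the $L^2$-norms of $\Psi_{\omega, E}$ and of $\omega' \cdot \nabla_x \Psi_{\omega, E}$ over $G \times S' \times I'$ are required, and both are controlled via the product rule by the first two $L^\infty$-hypotheses of \eqref{spco13}. The new ingredient, forced by the trace condition in $\widetilde W^2$, is the pointwise bound
\[
|\gamma(\Psi_{\omega, E})(y, \omega', E')| \leq \|\sigma^1(\cdot, \cdot, \omega, \cdot, E)\|_{L^\infty(\Gamma' \times I')}\, |\gamma(\psi)(y, \omega', E')|,
\]
after which the third hypothesis of \eqref{spco13} places $\gamma(\Psi_{\omega, E})$ in $T^2(\Gamma)$ with an $L^2(S \times I)$-integrable norm. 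The main technical obstacle, specific to Part A, is justifying the fractional multiplication estimate for non-integer $(s-1)/2$; this is classical (decompose $\sigma^1(x)\psi(x) - \sigma^1(y)\psi(y) = (\sigma^1(x) - \sigma^1(y))\psi(x) + \sigma^1(y)(\psi(x) - \psi(y))$ in the Slobodetskij seminorm and control $\sigma^1$ and its first-order differences by the $W^\infty$-hypotheses), but it is the only step in the proof that does not reduce directly to the product rule and Cauchy--Schwarz.
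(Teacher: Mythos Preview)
Your proposal is correct and follows exactly the approach of the paper: define $\Psi_{\omega,E}(x,\omega',E'):=\sigma^1(x,\omega',\omega,E',E)\psi(x,\omega',E')$, observe that $(K_r^1\psi)(x,\omega,E)=(\mc M\Psi_{\omega,E})(x)$, and apply Theorem \ref{spcoth1} fibrewise in $(\omega,E)$. The paper's proof consists of precisely this observation together with the phrase ``We omit details'', so your write-up in fact supplies the details the paper leaves out (the product-rule handling of $\omega'\cdot\nabla_x\Psi_{\omega,E}$, the $W^{\infty}$-multiplier estimate on $H^{(s-1)/2}$, and the trace control needed for $\widetilde W^2$ in Part B).
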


\begin{proof}
 
Define for a fixed $(\omega,E)\in S\times I$ a mapping $\Psi_{\omega,E}:G\times S'\times I'\to \R$ by
\[
\Psi_{\omega,E}(x,\omega',E'):=\sigma^1(x,\omega',\omega,E',E)\psi(x,\omega',E').
\]
Noting that 
\[
(K_r^1\psi)(x,\omega,E)=({\mc M}\Psi_{\omega,E})(x)
\]
the proof follows from Theorem \ref{spcoth1}. We omit details.
\end{proof}

Consider the transport problem
\be\label{fr-1}
T\psi=f,\ \psi_{|\Gamma_-}=g,\ \psi(.,.,E_m)=0.
\ee
Let $u:=\psi-L_-g$.
Assuming that the incoming data $g$ is smooth enough and that the compatibility condition $g(.,.,E_m)=0$ holds 
the problem (\ref{fr-1}) is equivalent to
\be\label{fr-2}
Tu=f-T(L_-g),\ u_{|\Gamma_-}=0,\ u(.,.,E_m)=0.
\ee
Hence it suffices to consider only the problem of the form 
\be\label{fr-2a}
Tu=f,\ u_{|\Gamma_-}=0,\ u(.,.,E_m)=0
\ee
for which the homogeneous inflow boundary condition $u_{|\Gamma_-}=0$ holds.
Assume (for simplicity) that the transport operator $T$ is of the form
\[
Tu=-a{\p {u}E}+\omega\cdot\nabla_xu+
\Sigma u-K_ru
\]
where $a$ and $\Sigma$ are positive constants
and where $K_r=K_r^2$. 

Let the assumptions of Theorem \ref{coth3-dd} and Remark \ref{remarks-1} be valid.
Due to (\ref{comp12}) (here we use notations of section \ref{meta})
\be\label{fr-14}
u=\widetilde{ P}_{C,0}^{-1}K_{r,C}u+\widetilde{ P}_{C,0}^{-1} f_C.
\ee
By iteration we obtain from (\ref{fr-14})
\bea\label{fr-15}
&
u=\widetilde{ P}_{C,0}^{-1}K_{r,C}u+\widetilde{ P}_{C,0}^{-1} f_C
\nonumber\\
&
=
\widetilde{ P}_{C,0}^{-1}K_{r,C}\big(\widetilde{ P}_{C,0}^{-1}K_{r,C}u+\widetilde{ P}_{C,0}^{-1} f_C\big)+\widetilde{ P}_{C,0}^{-1}  f_C
\nonumber\\
&
=
(\widetilde{ P}_{C,0}^{-1}K_{r,C})^2u+(\widetilde{ P}_{C,0}^{-1}K_{r,C})\widetilde{ P}_{C,0}^{-1}  f_C
+\widetilde{ P}_{C,0}^{-1} f_C
\nonumber\\
&
=
(\widetilde{ P}_{C,0}^{-1}K_{r,C})^3u+(\widetilde{ P}_{C,0}^{-1}K_{r,C})^2\widetilde{ P}_{C,0}^{-1} f_C
+(\widetilde{ P}_{C,0}^{-1}K_{r,C})\widetilde{ P}_{C,0}^{-1} f_C
+\widetilde{ P}_{C,0}^{-1} f_C.
\eea
The iteration can be continued in the similar way.

The iteration formula (\ref{fr-15}) together with the smoothing property of collision operator  can be used to prove the regularity of solutions 
(cf. \cite{chen20}). The idea is as follows:

Let $f_C\in H^{(\kappa,0,1)}(G\times S\times I^\circ),\ 0<\kappa\leq 1/2$. 
Then $f_C\in H^{(\kappa,0,0)}(G\times S\times I^\circ)$ and we 
are able to deduce (under some additional assumptions) that
\be\label{frac-P}
w:=\widetilde{ P}_{C,0}^{-1} f_C\in  H^{(\kappa,0,0)}(G\times S\times I^\circ).
\ee
This is based on the following decomposition.
In virtue of (\ref{ts-1}) the solution $w$
is 
\be\label{fr-3}
w(x,\omega,E)= 
\int_0^{\alpha(x,\omega,E)}
e^{-\Sigma s}
\cdot
 f_C(x-s\omega,\omega,E+as) ds
\ee
where $\alpha(x,\omega,E):=\min\{\eta(E),t(x,\omega)\},\ \eta(E)={{E_m-E}\over a}$.
We decompose
\bea\label{fr-4}
&
w(x,\omega,E)-w(y,\omega,E)\nonumber\\
&
= 
\int_0^{\alpha(x,\omega,E)}
e^{-\Sigma s}
\cdot
\big(  f_C(x-s\omega,\omega,E+as) 
-  f_C(y-s\omega,\omega,E+as) \big)ds
\nonumber\\
&
+
\Big(\int_{0}^{\alpha(x,\omega,E)}
e^{-\Sigma s}
 f_C(y-s\omega,\omega,E+as) ds
-
\int_{0}^{\alpha(y,\omega,E)}
e^{-\Sigma s} 
 f_C(y-s\omega,\omega,E+as) ds\Big)\nonumber\\
&
=:
v_1(x,y,\omega,E)+v_2(x,y,\omega,E).
\eea
Note that
\[
v_2(x,y,\omega,E)=-
\int_{\alpha(x,\omega,E)}^{\alpha(y,\omega,E)}
e^{-\Sigma s}
\cdot
 f_C(y-s\omega,\omega,E+as) ds
\]
It suffices to show that 
\[
Q_j:=\int_I\int_S\int_G\int_G {{|v_j(x,y,\omega,E)|^2}\over{|x-y|^{3+2\kappa}}} dy dx  d\omega dE  <\infty,\ j=1,2.
\]
The term 
$
Q_1  
$
can be seen to be finite by fairly standard estimations.
The term 
$
Q_2
$
requires  more specific consideration and we do not analyse it  here (see \cite[the proof of Lemma 1.12]{chen20}).

Furthermore, due to Remark \ref{remarks-1}
$u\in 
H^{(0,0,1)}(G\times S\times I^\circ)$ and so
${\p u{E}}\in L^2(G\times S\times I)$. This implies that
\be\label{fr-23}
\omega\cdot\nabla_xu=a{\p u{E}}-\Sigma u+K^2_{r,C}u+f_C\in
L^2(G\times S\times I)
\ee
and so $u\in  W^2(G\times S\times I)$. In addition, $u_{|\Gamma_-}=0$ and then $u\in \widetilde W^2(G\times S\times I)$.
In virtue of Corollary \ref{spcocor2} $K_{r,C}u\in H^{(1/2,0,0)}(G\times S\times I^\circ)$. 
Hence similarly to (\ref{frac-P}) (under appropriate additional assumptions)
$\widetilde{ P}_{C,0}^{-1}K_{r,C}u\in  H^{(\kappa,0,0)}(G\times S\times I^\circ)$
and so by  (\ref{fr-15}) 
\be\label{fr-16}
u=\widetilde{ P}_{C,0}^{-1}K_{r,C}u+\widetilde{ P}_{C,0}^{-1} f_C
\in H^{(\kappa,0,0)}(G\times S\times I^\circ).
\ee

Higher order regularity up to certain (fractional) order can be obtained by using higher order iteration  (\ref{fr-15}). In the case where $G=\R^3$ the solution is 
\[
u(x,\omega,E)= 
\int_0^{\eta(E)}
e^{-\Sigma s}
\cdot
f_C(x-s\omega,\omega,E+as) ds.
\]
and the method can be founded fairly straightforwardly on Corollary \ref{spcocor2}/Part A. In this case only the repeating of the first iteration formula (\ref{fr-15}) is sufficient.
For example, when we  know that 
$u\in  {\mc H}^{(\kappa,0,0)}(\R^3\times S\times I^\circ)$ we can under appropriate assumptions conclude by Corollary
\ref{spcocor2}/Part A (by choosing $s=2\kappa+1$) that $K_{r,C}u\in H^{(2\kappa,0,0)}(\R^3\times S\times I^\circ)$
and so
\[
u=\widetilde{ P}_{C,0}^{-1}K_{r,C}u
+\widetilde{ P}_{C,0}^{-1}f_C\in H^{(2\kappa,0,0)}(\R^3\times S\times I^\circ)
\]
where we additionally used the (quite easily verifiable) fact that for $s\geq 0$
\[
w=\widetilde{ P}_{C,0}^{-1}f\in H^{(s,0,0)}(\R^3\times S\times I^\circ)\ {\rm when}\ 
f\in H^{(s,0,0)}(\R^3\times S\times I^\circ).
\]
In the case where $G\subset\R^3$  is bounded the method needs more subtle treatise because the escape time mapping $t(x,\omega)$ is included in the solution formula (cf. \cite{chen20} where regularity with respect to $x$ up to order $1-\epsilon$ is achieved).

Suitability of this method to more general transport equations is potential.

\section{Some  outlines for regularity eminating from general theory of first order PDEs}\label{outlines}

\subsection{Transport problem in rectified coordinates}\label{breg}

In some cases the transport problem in the so called rectified coordinates is useful.
At first we settle how the problem
\bea\label{breg-0a}
&
T\psi=
a(x,E){\p \psi{E}}
+c(x,E)\Delta_S\psi+d(x,\omega,E)\cdot\nabla_S\psi
+\omega\cdot\nabla_x\psi+\Sigma\psi-K_{r}\psi\nonumber\\
&
\psi_{|\Gamma_-}=g,\ \psi(.,.,E_m)=0
\eea
locally changes when the boundary is rectified.

The boundary of a sufficiently regular domain $G\subset\R^3$ can be rectified in the following sense. 
The boundary $\partial G$ of $G$ is said to be in the {\it class} $C^{k}$, or shortly $\partial G\in C^{k}$,
(see e.g. \cite{grisvard}, section 1.2) if for any $y\in\partial G$ there exists an open neighbourhood $U_y\subset\R^3$ of $y$ and a diffeomorphism $h:U_y\to D_{a,b}$ such that
\bea\label{breg-0b}
&
h\in C_*^{k}(U_y,D_{a,b}),\ h^{-1}\in C_*^{k}(D_{a,b},U_y),\nonumber\\
&
h^{-1}(D_{+,a,b})=G\cap U_y,\ 
h^{-1}(D_{0,a,b})= (\partial G)\cap U_y
\eea
where
\[
C_*^{k}(U,U'):=\{f\in C^k(U,U')|\ {\rm for\ which}\ 
\n{f}_{C_*^{k}(U,U')}:=\sum_{|\alpha|\leq k}\n{\partial_x^\alpha f}_{L^\infty(U,U')}
<\infty\},
\]
\[
D_{a,b}:=\{z=(z',z_3)\in\R^3|\ \n{z'}<a,\ |z_3|<b\},
\]
\[
D_{+,a,b}:=\{z\in D_{a,b}|\ z_3>0\},
\]
\[
D_{0,a,b}:=\{z\in D_{a,b}|\ z_3=0\}.
\]

Firstly we consider in which way the equation $T\psi=f$ changes under the diffeomorphism $h:U_y\to D_{a,b}$. We assume   that the coefficients $a,\ c,\ d,\ \Sigma,\ \sigma^j,\ j=1,2,3$ are regular enough up to the boundary.
Let $(y,\omega,E)\in \Gamma$  and
let $\phi:D_{+,a,b}\to\R$ be a mapping
\[
\phi(z,\omega,E):=(\psi\circ h^{-1})(z,\omega,E):=\psi(h^{-1}(z),\omega,E).
\]
Then $\psi=\phi\circ h$ in $G\cap U_y$ when we define $(\phi\circ h)(x,\omega,E):=\phi(h(x),\omega,E)$. Since $h$ does not depend on $\omega$ and $E$ we have for $(z,\omega,E)\in V_+:=D_{+,a,b}\times S\times I$
\[
&
(a{\p \psi{E}})\circ h^{-1}=(a\circ h^{-1}){\p \phi{E}},\ 
(c\Delta_S\psi)\circ h^{-1}=(c\circ h^{-1})\Delta_S\phi
\\
&
(d\cdot\nabla_S\psi)\circ h^{-1}=(d\circ h^{-1})\cdot\nabla_S\phi,\
(\Sigma\psi)\circ h^{-1}=(\Sigma\circ h^{-1})\phi
\]
and
\bea\label{cKr}
((K_{r}\psi)\circ h^{-1})(z,\omega,E)&=
\int_{S'\times I'}\sigma^1(h^{-1}(z),\omega',\omega,E',E)\phi(z,\omega',E')d\omega' dE'\nonumber\\
&
+
\int_{ S'}\sigma^2(h^{-1}(z),\omega',\omega,E)\phi(z,\omega',E) d\omega',  
\nonumber\\
&
+
\int_{I'}\int_{0}^{2\pi}
\sigma^3(h^{-1}(z),E',E)
\phi(z,\gamma(E',E,\omega)(s),E')ds dE'
\nonumber\\
&
=:(\widetilde K_r\phi)(z,\omega,E).
\eea

Furthermore,
\[
{\p \psi{x_j}}={\p {(\phi\circ h)}{x_j}}=\la(\nabla_z\phi)\circ h,{\p h{x_j}}\ra 
\]
and so
\[
&
\omega\cdot\nabla_x\psi=\sum_{j=1}^3\omega_j\la (\nabla_z\phi)\circ h,{\p h{x_j}}\ra.
\]
Hence
\be\label{breg-1}
(\omega\cdot\nabla_x\psi)\circ h^{-1}=\la\nabla_z\phi,\sum_{j=1}^3\omega_j{\p h{x_j}}\circ h^{-1}\ra
=
\tilde b(z,\omega)\cdot\nabla_z\phi
\ee
where
\be\label{tb}
\tilde b(z,\omega):=\sum_{j=1}^3\omega_j{\p h{x_j}}\circ h^{-1}
=(\omega\cdot\nabla_xh_1,\omega\cdot\nabla_xh_2,\omega\cdot\nabla_xh_3)\circ h^{-1}.
\ee

The equation $T\psi=f$ holds in $(G\cap U_y)\times S\times I$ if and only if $(T\psi)\circ h^{-1}=f\circ h^{-1}$ in  $V_+$ that is,
\[
&
\big(a{\p \psi{E}}\big)\circ h^{-1}
+(c\Delta_S\psi)\circ h^{-1}+(d\cdot\nabla_S\psi)\circ h^{-1}
+(\omega\cdot\nabla_x\psi)\circ h^{-1}\\
&
+(\Sigma\psi)\circ h^{-1}-(K_{r}\psi)\circ h^{-1}=f\circ h^{-1}
\]
which by the above computations is equivalent to
\be\label{breg-2}
\tilde a(z,E){\p \phi{E}}
+\tilde c(z,E)\Delta_S\phi+\tilde d(z,\omega,E)\cdot\nabla_S\phi
+\tilde b(x,\omega)\cdot\nabla_z\phi+\tilde\Sigma\phi-\widetilde K_{r}\phi=\tilde f
\ee
where $\tilde b(z,E)$ and $\widetilde K_r$ are given by  (\ref{tb}), (\ref{cKr}), respectively and where 
\[
&
\tilde a(z,E):=a(h^{-1}(z),E),\ \tilde c(z,E):=c(h^{-1}(z),E),\ 
\tilde d(z,\omega,E):=d(h^{-1}(z),\omega,E),\\
&
\tilde \Sigma(z,\omega,E):=\Sigma(h^{-1}(z),\omega,E),\ \tilde f(z,\omega,E):=f(h^{-1}(z),\omega,E).
\]

Secondly, we consider the inflow boundary condition.
We find that (recall (\ref{breg-0b}))
\[
(\partial G)\cap U_y=\{y'\in U_y|\ h_3(y')=0\}
\]
and so (when we fix $h_3$ such that $-\nabla_xh_3$ is pointing outwards on $(\partial G)\cap U_y$).  
\[
\nu(y')=-{{\nabla_xh_3(y')}\over{\n{\nabla_xh_3(y')}}}.
\]
Then
\be\label{breg-3}
\omega\cdot\nu(y')<0\Leftrightarrow \omega\cdot \nabla_xh_3(y')>0.
\ee
Let $n:=(0,0,-1)$ (=the outward pointing unit normal on on $D_{0,a,b}$) and let $\ol\sigma$ be the surface measure  $D_{0,a,b}\subset\R^3$.
Then we find that
\[
\tilde b\cdot n=\sum_{j=1}^3((\omega\cdot\nabla_xh_j)\circ h^{-1})n_j
=-(\omega\cdot\nabla_xh_3)\circ h^{-1}
\]
and so by (\ref{breg-3})
\be\label{breg-5}
\omega\cdot\nu(y')<0\Leftrightarrow \omega\cdot\nabla_xh_3(y')>0
\Leftrightarrow  \tilde b_3(z,\omega)>0
\Leftrightarrow \tilde b(z,\omega)\cdot n(z)=-\tilde b_3(z,\omega)<0
\ee
where $y'=h^{-1}(z)$. 
Note that the initial condition leaves $\phi(.,.,E_m)=0$.

Let 
\[
{R}_{0,a,b,-}':=\{(z,\omega)\in D_{0,a,b}\times S|\ \tilde b_3(z,\omega)<0\},\ {R}_{0,a,b,-}:=R_{0,a,b,-}'\times I.
\]
As a conclusion we see that the problem 
(\ref{breg-0a}) is \emph{locally around $\Gamma$} equivalent to the problem
\bea\label{breg-7}
&
\widetilde T\phi:=\tilde a(z,E){\p \phi{E}}
+\tilde c(z,E)\Delta_S\phi+\tilde d(z,\omega,E)\cdot\nabla_S\phi
+\tilde b(x,\omega)\cdot\nabla_z\phi+\tilde\Sigma\phi-\widetilde K_{r}\phi=\tilde f\nonumber\\
&
\phi_{|R_{0,a,b,-}}=\tilde g,\ \phi(.,.,E_m)=0
\eea 
where
\[
\tilde g(z,\omega,E):=g(h^{-1}(z),\omega,E).
\]
We see that the  problem is again of {variable multiplicity}.

\begin{remark}\label{cc-re}
Using the local results (that is, results for patches $G\cap U_y$), the
global results (that is, results for the whole $G$) can be retrieved by applying the usual \emph{partition of unity process} (\cite[section 1.2]{wloka}).
Above we treated only localization around $\Gamma$. Using the standard localization and partition of unity methods only these patches are interesting.
The interior patches can be easily  handled.
\end{remark}

\subsection{On tangential regularity}\label{treg}

The solutions of the characteristic initial  boundary value problems  do not have necessarily full Sobolev regularity (e.g. \cite[Examples 2.1, 2.2]{nishitani96}). However, the problems whose boundary matrix  has a constant rank (problems with constant multiplicity) the so called tangential (or co-normal) Sobolev regularity can be shown under relevant assumptions (e.g. \cite{rauch85}, \cite{morando09}). For the problems with variable multiplicity even the tangential regularity need not be valid  but for some problems it may be possible.  
In the sequel we formulate some details of tangential regularity concerning for transport problems under consideration.

A smooth vector field $F=\sum_{j=1}^3a_j\partial_{x_j}:\ol G\times S\times I\to T(\R^3)=\R^3$ is \emph{tangential (or co-normal) with respect to $\Gamma=(\partial G)\times S\times I$} if (here $a:=(a_1,a_2,a_3)$) 
\[
\la a,\nu\ra=0\ {\rm on}\ \Gamma.
\]
Similarly, we define tangential (or co-normal) vector fields with respect to $\Gamma_{\pm}$ and
tangential (or co-normal) vector fields $F:\ol D_{+,a,b}\times S\times I\to T(\R^3)$ with respect to $D_{0,a,b}\times S\times I$.

We find that
$\{\partial_{z_1},\partial_{z_2}, z_3\partial_{z_3}\}$ forms  a  basis for the space of tangential vector fields $F:\eta\in \ol D_{+,a,b}\times S\times I\to T(\R^3)$ with respect to $D_{0,a,b}\times S\times I$.
This follows from the fact that for any smooth $\eta\in \ol D_{+,a,b}\times S\times I$ obeying $\eta(z_1,z_2,0,\omega,E)=0$ 
we have
\[
\eta(z_1,z_2,z_3,\omega,E)=z_3\int_0^1{\p \eta{z_3}}(z_1,z_2,tz_3,\omega,E)dt.
\]
This implies that the space of tangential vector fields with respect to $\Gamma$ has  a natural local basis (near $\Gamma$) as well which can be seen as follows.
We find that for $\phi=\psi\circ h^{-1}$ and for $j=1,2$
\[
{\p \phi{z_j}}=\la (\nabla_x\psi)\circ h^{-1},{\p {(h^{-1})}{z_j}}\ra.
\]
Let $A_h$ and $A_{h^{-1}}$ be the Jacobian matrices of $h$ and $h^{-1}$, respectively.
Since
\[
A_{h^{-1}}=\qmatrix{{\p {(h^{-1})}{z_1}}&{\p {(h^{-1})}{z_2}}&
{\p {(h^{-1})}{z_3}}\\}
=A_h^{-1}\circ h^{-1}=(({\rm det}(A_h))^{-1}\circ h^{-1})\qmatrix{A_1&A_2&A_3\\}\circ h^{-1}
\]
where
\[
A_1=
\qmatrix{\partial_2h_2\partial_3h_3-\partial_2h_3\partial_3h_2\\ 
\partial_1h_2\partial_3h_3-\partial_1h_3\partial_3h_2\\
\partial_1h_2\partial_2h_3-\partial_1h_3\partial_2h_2\\},
A_2=\qmatrix{\partial_3h_1\partial_2h_3-\partial_3h_3\partial_2h_1\\ 
\partial_1h_1\partial_3h_3-\partial_1h_3\partial_3h_1\\
\partial_2h_1\partial_1h_3-\partial_2h_3\partial_1h_1\\},
A_3= \qmatrix{\partial_2h_1\partial_3h_2-\partial_2h_2\partial_3h_1\\ 
\partial_3h_1\partial_1h_2-\partial_3h_2\partial_1h_1\\
\partial_1h_1\partial_2h_2-\partial_1h_2\partial_2h_1\\}
\]
we see that
\[
{\p {(h^{-1})}{z_j}}=(({\rm det}(A_h))^{-1}\circ h^{-1})A_j\circ h^{-1}. 
\]
Hence
\bea\label{treg-5}
&
{\p \phi{z_j}}=(({\rm det}(A_h))^{-1}\circ h^{-1})\la (\nabla_x\psi)\circ h^{-1},A_j\circ h^{-1}\ra
\nonumber\\
&
=
(({\rm det}(A_h))^{-1}\circ h^{-1})\la \nabla_x\psi,A_j\ra\circ h^{-1}.
\eea

Furthermore, since 
\[
\nu=-{{\nabla_xh_3}\over{\n{\nabla_xh_3}}}
\]
we see that on $\Gamma$ 
\bea\label{treg-6}
&
-\n{\nabla_xh_3}\la \nu,A_1\ra \nonumber\\
&
=-\big(
\partial_1h_3\partial_2h_2\partial_3h_3-\partial_1h_3\partial_2h_3\partial_3h_2+ 
\partial_2h_3\partial_1h_2\partial_3h_3-\partial_2h_3\partial_1h_3\partial_3h_2
\nonumber\\
&
+
\partial_3h_3\partial_1h_2\partial_2h_3-\partial_3h_3\partial_1h_3\partial_2h_2\big) 
=0.
\eea
Similarly 
\be\label{treg-7}
\n{\nabla_xh_3}\la \nu,A_2\ra=0
\ee
on $\Gamma$
and so $F_j:=({\rm det}(A_h))^{-1}\la A_j,\nabla_x\ra,\ j=1,2$ are tangential vector fields with respect to $\Gamma$.
Finally, we analogously find that
\be\label{treg-8}
z_3{\p \phi{z_3}}
=
(h_3(({\rm det}(A_h))^{-1})\circ h^{-1})\la \nabla_x\psi,A_3\ra\circ h^{-1}.
\ee
Since $h_3(y)=0$ on $\Gamma$ we see that
also $F_3:=h_3({\rm det}(A_h))^{-1}\la A_3,\nabla_x\ra$ is a tangential vector field with respect to $\Gamma$.
The above computations show that $\{F_1,F_2,F_3\}:=\{\la A_1,\nabla_x\ra,\ \la A_2,\nabla_x\ra,\ h_3\la A_3,\nabla_x\ra\}$ forms (near $\Gamma$) a local basis of tangential vector fields with respect to $\Gamma$.

We define
\[
&
H^{(m,0,0)}(G\times S\times I^\circ;\Gamma):=L^2(S\times I,H^m(G;\Gamma))\\
&
:=\{\psi\in L^2(G\times S\times I)|\ F^\alpha \psi\in L^2(G\times S\times I)\ {\rm for}\ |\alpha|\leq m\}
\]
where $F\alpha:=F_1^{\alpha_1}F_2^{\alpha_2}F_3^{\alpha_3}$ and $\{F_1,F_2,F_3\}$ is a local basis of tangential vector fields with respect to $\Gamma$. 
Similarly we define tangential Sobolev spaces $H^{(m,0,0)}(G\times S\times I^\circ;\Gamma_{\pm})$ and the tangential Sobolev spaces $H^{(m,0,0)}(D_{+,a,b,}\times S\times I^\circ;D_{a,b,0}\times S\times I)$.
Note that $H^{(m,0,0)}(G\times S\times I^\circ;\Gamma)\subset L^2(S\times I,H^m_{\rm loc}(G))$. 
Moreover, we find that $\psi\in H^{(m,0,0)}(G\times S\times I^\circ;\Gamma)$
if and only if locally 
\be\label{ztan}
\n{Z^\alpha (\psi\circ h^{-1})}_{L^2(D_{+,a,b}\times S\times I)}<\infty\ {\rm for\ all}\ |\alpha|\leq m
\ee
where $Z^\alpha:=\partial_{z_1}^{\alpha_1}\partial_{z_2}^{\alpha_2}(z_3\partial_{z_3})^{\alpha_3}$ for $\alpha=(\alpha_1,\alpha_2,\alpha_3)\in \N_0^3$. 
The condition (\ref{ztan})means that locally $\phi=\psi\circ h^{-1}\in 
H^{(m,0,0)}(D_{+,a,b,}\times S\times I^\circ;D_{0,a,b}\times S\times I)$. 
$H^{(m,0,0)}(G\times S\times I^\circ;\Gamma)$ can be equipped with a natural norm by glueing together  local patches. Actually, it can be equipped with an inner product and it is a Hilbert space. 
We omit  details.

\begin{example}\label{treg-ex1}
In the case of open ball $G=B(0,1)$ we can choose (locally) in a neighbourhood $e_3=(0,0,1)$ 
\[
h(x)=(x_1,x_2,1-|x|^2).
\]
The local inverse of $h$  is $h^{-1}(z)=(z_1,z_2,\sqrt{1-z_3-z_1^2-z_2^2})$.
We have
\[
\partial_1h=(1,0,-2x_1),\ \partial_2h=(0,1,-2x_2), \ \partial_3h=(0,0,-2x_3)
\]
and so (recall that in this case $\nu(x)=x$),  
\[
&
A_1=
(\partial_2h_2\partial_3h_3-\partial_2h_3\partial_3h_2,
\partial_1h_2\partial_3h_3-\partial_1h_3\partial_3h_2,
\partial_1h_2\partial_2h_3-\partial_1h_3\partial_2h_2)=2(-x_3,0,x_1),\\
&
A_2=(\partial_3h_1\partial_2h_3-\partial_3h_3\partial_2h_1, 
\partial_1h_1\partial_3h_3-\partial_1h_3\partial_3h_1,
\partial_2h_1\partial_1h_3-\partial_2h_3\partial_1h_1)
=2(0,-x_3,x_2),\\
&
A_3= (\partial_2h_1\partial_3h_2-\partial_2h_2\partial_3h_1,
\partial_3h_1\partial_1h_2-\partial_3h_2\partial_1h_1,
\partial_1h_1\partial_2h_2-\partial_1h_2\partial_2h_1)
=(0,0,1).
\]
Hence the (local) basis of tangential vector fields near $\Gamma$ with respect to $\Gamma$  is $\{-x_3\partial_{x_1}+x_1\partial_{x_3}, \ -x_3\partial_{x_2}+x_2\partial_{x_3},\ (1-|x|^2)\partial_{x_3}\}$. 

\end{example}

The below Example \ref{exreg-a} verifies that the higher order tangential regularity is not necessarily valid for the present transport problems.
We need the next lemma.

\begin{lemma}\label{conv}
Let $G=B(0,1)$ and let $k\in\N$  and $q>0$. The integral
\be 
\int_{G\times S}{{(\omega\cdot x)^k}\over{((\omega\cdot x)^2+1-|x|^2)^q}} d\omega dx
\ee
convergences if and only if $k-2q>-3$.
\end{lemma}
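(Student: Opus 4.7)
The plan is to reduce the integral by spherical symmetry to a one-dimensional problem and then to analyse the behaviour of the resulting integrand near the singular locus $|x|=1$. Throughout, ``convergence'' should be read in the absolute sense: this is the substantive content, since for odd $k$ the signed integral vanishes by the symmetry $\omega\mapsto-\omega$ (only $(\omega\cdot x)^2$ appears in the denominator).

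First I would pass to spherical coordinates $x=r\xi$, $r\in[0,1]$, $\xi\in S$, so that $dx=r^2\,dr\,d\xi$ and $\omega\cdot x=r(\omega\cdot\xi)$. By rotational invariance of $d\omega$ on $S$, the inner integral $\int_{S}|\omega\cdot\xi|^k/(r^2(\omega\cdot\xi)^2+1-r^2)^q\,d\omega$ is independent of $\xi$, so integration over $\xi$ merely contributes a factor $4\pi$. Parametrising $S$ with polar axis along $\xi$ gives $d\omega=2\pi\,d\mu$ for $\mu=\omega\cdot\xi\in[-1,1]$, reducing the problem to showing that
\[
I=8\pi^2\int_0^1 r^{k+2}\,J(r)\,dr,\qquad J(r):=\int_{-1}^{1}\frac{|\mu|^k}{(r^2\mu^2+1-r^2)^q}\,d\mu,
\]
is finite exactly when $k-2q>-3$.

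The second step is to scale out the characteristic size of the singularity via the substitution $\mu=\sqrt{1-r^2}\,u$. Using the factorisation $r^2\mu^2+1-r^2=(1-r^2)(r^2u^2+1)$ one gets
\[
J(r)=(1-r^2)^{(k+1)/2-q}\int_{-A}^{A}\frac{|u|^k}{(r^2u^2+1)^q}\,du,\qquad A:=(1-r^2)^{-1/2}.
\]
As $r\to 1^-$ (so $A\to\infty$), the $u$-integral is controlled by its tail behaviour $\sim|u|^{k-2q}$: it stays bounded when $k-2q<-1$, behaves like $\log A$ when $k-2q=-1$, and like $A^{k-2q+1}$ when $k-2q>-1$. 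Multiplying by the prefactor $(1-r^2)^{(k+1)/2-q}$ produces three regimes: $J(r)=O(1)$ if $k-2q>-1$, $J(r)=O(\log(1/(1-r)))$ if $k-2q=-1$, and $J(r)\sim c\,(1-r)^{(k+1)/2-q}$ if $k-2q<-1$.

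Finally I would insert these asymptotics into the outer integral. Near $r=0$ the factor $r^{k+2}$ is harmless; near $r=1$, integrability is immediate in the first two regimes (note $k-2q=-1>-3$), while in the third regime $\int_0^1(1-r)^{(k+1)/2-q}\,dr$ converges iff $(k+1)/2-q>-1$, that is, iff $k-2q>-3$. Assembling the three cases yields the stated equivalence. The main obstacle is keeping the three-case bookkeeping clean, especially the borderline case $k-2q=-1$, where $J(r)$ blows up only logarithmically and hence remains $r$-integrable, in accordance with the threshold $k-2q>-3$.
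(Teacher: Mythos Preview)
Your argument is correct and complete for both directions (your use of $\sim$ in the third regime furnishes the lower bound needed for divergence when $k-2q\le -3$). The approach, however, is genuinely different from the paper's.

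The paper does not pass to spherical coordinates in $x$. Instead it applies the transport Fubini lemma (Lemma~\ref{pr:fubini}) to write the $G\times S$ integral as an integral over $\Gamma_-'\times[0,\tau_-(y,\omega)]$. The crucial observation is that for the unit ball the denominator is \emph{constant along rays}: for $(y,\omega)\in\Gamma_-'$ and $x=y+t\omega$ one has $(\omega\cdot x)^2+1-|x|^2=(\omega\cdot y)^2$. After this, the $t$-integral of the numerator $(\omega\cdot y+t)^k$ is expanded binomially and integrated explicitly, reducing the question to whether $\int_{\Gamma_-'}|\omega\cdot y|^{\,k-2q+2}\,d\sigma\,d\omega$ is finite, which is settled by a coarea argument over the level sets $\{y:\omega\cdot y=\tau\}$ and amounts to $\int_0^1\tau^{\,k-2q+2}\,d\tau<\infty$.

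What each buys: the paper's route stays inside the transport-theoretic toolkit used throughout the article and makes transparent the geometric reason for the threshold (the denominator is essentially $|\omega\cdot\nu(y)|^{2}$ on the inflow boundary). Your route is self-contained elementary calculus, avoids Lemma~\ref{pr:fubini} entirely, and handles non-integer exponents $k$ without the binomial expansion, at the cost of the three-regime bookkeeping you noted. Both reach the same threshold $k-2q>-3$.
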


\begin{proof}
Noting that $\nu(y)=y,\ \tau_-(y,\omega)=2|\omega\cdot y|$ and that on $\Gamma_{-}'$ 
\[
(\omega\cdot (y+t\omega))^2+1-|y+t\omega|^2
=(\omega\cdot y)^2
\]
we have by Lemma \ref{pr:fubini} 
\bea\label{re-tan-ex-2}
&
\int_{G\times S}{{(\omega\cdot x)^k}\over{((\omega\cdot x)^2+1-|x|^2)^q}} d\omega dx=
\int_{\Gamma_-'}\int_0^{2|\omega\cdot y|}
{{(\omega\cdot (y+t\omega))^k}\over{((\omega\cdot (y+t\omega))^2+1-|y+t\omega|^2)^q}}|\omega\cdot y| dt d\sigma(y) d\omega \nonumber
\\
&
=
\int_{\Gamma_-'}\int_0^{2|\omega\cdot y|}
{{(\omega\cdot y+t)^k}\over{(\omega\cdot y)^{2q}}}|\omega\cdot y| dt d\sigma(y) d\omega \nonumber \\
&
=
\sum_{j=0}^k{k\choose j}
\int_{\Gamma_-'}\int_0^{2|\omega\cdot y|}
{{(\omega\cdot y)^jt^{k-j}}\over{(\omega\cdot y)^{2q}}}|\omega\cdot y| dt d\sigma(y) d\omega \nonumber
\\
&
=
\sum_{j=0}^k{k\choose j}{1\over{k-j+1}}
\int_{\Gamma_-'}
{{(\omega\cdot y)^j(2|\omega\cdot y|)^{k-j+1}}\over{(\omega\cdot y)^{2q}}}|\omega\cdot y|  d\sigma(y) d\omega \nonumber\\
&
=
\sum_{j=0}^k{k\choose j}{1\over{k-j+1}}2^{k-j+1}(-1)^{j-2q}
\int_{\Gamma_-'}
|\omega\cdot y|^{k-2q+2}  d\sigma(y) d\omega 
\eea
where we additionally used that $|\omega\cdot y|=-\omega\cdot y$ on $\Gamma_-'$.

Consider the integral of the form $\int_{\Gamma_-'}|\omega\cdot y|^s d\sigma(y) d\omega$. 
Let
\[
\Gamma_{-,\omega}':=\{y\in\partial G|\ \omega\cdot y<0\},\
S_{\tau,\omega}:=\{y\in \partial G|\ \omega\cdot y=\tau\}.
\]
Note that  each $\Gamma_{-,\omega}$ is a hemisphere and $S_{\tau,\omega}$ is a circle whose radius is $\sqrt{1-\tau^2}$. Moreover, let $d\ell_{\tau,\omega}$ be the differential path length on $S_{\tau,\omega}$.
In virtue of the Fubin's Theorem 
(see \cite{tervo18-up}, section 3.2)
\bea\label{re-tan-ex-3}
&
\int_{\Gamma_-'}|\omega\cdot y|^s d\sigma(y) d\omega
=\int_S\int_{\Gamma_{-,\omega}}|\omega\cdot y|^s d\sigma(y) d\omega
=\int_S\int_{0}^1\int_{S_{\tau,\omega}}|\omega\cdot y|^s d\ell_{\tau,\omega}(y){1\over{\sqrt{1-\tau^2}}} d\tau d\omega
\nonumber\\
&
=\int_S\int_{0}^1\int_{S_{\tau,\omega}}\tau^s  {1\over{\sqrt{1-\tau^2}}}
d\ell_{\tau,\omega}(y)d\tau d\omega
=2\pi \int_S\int_{0}^1\tau^s \sqrt{1-\tau^2}{1\over{\sqrt{1-\tau^2}}} d\tau d\omega
=8\pi \int_{0}^1\tau^s  d\tau
\eea
and so the integral $\int_{\Gamma_-'}|\omega\cdot y|^s d\sigma(y) d\omega$  convergences if and only if $s>-1$.
Hence  the integral (\ref{re-tan-ex-2}) convergences if and only if
\[
k-2q+2>-1 \Leftrightarrow k-2q>-3
\]
as desired.
\end{proof}

\begin{remark}
In the previous lemma it is not essential that $k\in \N$ since the integral
$\int_0^{2|\omega\cdot y|}
(\omega\cdot y+t)^k dt$ can be computed for any $k\in\R$. However, the computations above are sufficient for our needs. Especially we find that 
the integral $
\int_{G\times S}{1\over{((\omega\cdot x)^2+1-|x|^2)^q}} d\omega dx$ convergences if and only if $q<{3\over 2}$. Since $|\omega\cdot y+t|\leq  |\omega\cdot y|+t$ we  also see by the above proof that the integral
$
\int_{G\times S}{{|\omega\cdot x|^k}\over{((\omega\cdot x)^2+1-|x|^2)^q}} d\omega dx
$
convergences if $k-2q>-3$. 
\end{remark}

\begin{example}\label{exreg-a}
 
Let $G=B(0,1)\subset \R^3$ and consider the mono-kinetic problem (given in \cite[Example 7.4]{tervo17-up})
\[
\omega\cdot\nabla\psi+\psi=1, \
\psi_{|\Gamma_-}=0.
\]
The solution of the problem is
\[
\psi=1-e^{-t(x,\omega)},
\]
where
\[
t(x,\omega)=x\cdot \omega +\sqrt{(x\cdot \omega)^2+1-|x|^2}.
\]
We find that
\[
{\p {\psi}{x_j}}=e^{-t(x,\omega)}{\p t{x_j}}
=e^{-t(x,\omega)}\omega_j
+
e^{-t(x,\omega)}
{{(x\cdot\omega)\omega_j-x_j}\over{\big((x\cdot\omega)^2+1-|x|^2\big)^{1/2}}}
\]
and then
\[
\nabla_x\psi=e^{-t(x,\omega)}\omega
+
e^{-t(x,\omega)}
{{(x\cdot\omega)\omega-x}\over{\big((x\cdot\omega)^2+1-|x|^2\big)^{1/2}}}.
\]

Let
$
F_j=\la A_j,\nabla_x\ra,\ j=1,2.
$
be the basis tangent  vectors  deduced in Example \ref{treg-ex1}. Recall that
\[
A_1(x)=2(-x_3,0,x_1),\
A_2(x)=2(0,-x_3,x_2).
\]
We find that
\[
&
F_1\psi =
2\la (-x_3,0,x_1),
e^{-t(x,\omega)}\omega\ra\\
&
+
2\la (-x_3,0,x_1),
e^{-t(x,\omega)}
{{(x\cdot\omega)\omega-x}\over{\big((x\cdot\omega)^2+1-|x|^2\big)^{1/2}}}
\ra 
\\
&
=
2e^{-t(x,\omega)}(-x_3\omega_1+x_1\omega_3)\\
&
+
2{{e^{-t(x,\omega)}}
\over{\big((x\cdot\omega)^2+1-|x|^2\big)^{1/2}}}
\big(
(x\cdot\omega)(-x_3\omega_1+x_1\omega_3)-(-x_1x_3+x_3x_1)\big)
\\
&
=2e^{-t(x,\omega)}u_1+2e^{-t(x,\omega)}u_2
\]
where
\[
u_1:=(-x_3\omega_1+x_1\omega_3),\
u_2:={1\over{\big((x\cdot\omega)^2+1-|x|^2\big)^{1/2}}}
(x\cdot\omega)(-x_3\omega_1+x_1\omega_3).
\]

Furthermore,
\[
&
F_1^2\psi=F_1(F_1\psi)=
\la  A_1,\nabla_x(F_1\psi)\ra
=
2\la  A_1,\nabla_x(e^{-t(x,\omega)}u_1+e^{-t(x,\omega)}u_2)\ra
\\
&
=
2\la  A_1,e^{-t(x,\omega)}\nabla_xu_1-
u_1e^{-t(x,\omega)}\nabla_x t\ra
+2\la A_1,e^{-t(x,\omega)}\nabla_xu_2-
u_2e^{-t(x,\omega)}\nabla_x t\ra\\
&
=:2U_1+2U_2
\]
where
\[
&
U_1:=\la  A_1,e^{-t(x,\omega)}\nabla_xu_1-
u_1e^{-t(x,\omega)}\nabla_x t\ra,
\\
&
U_2:=\la A_1,e^{-t(x,\omega)}\nabla_xu_2-
u_2e^{-t(x,\omega)}\nabla_x t\ra.
\]

Since $A_1=(-x_3,0,x_1),\ \nabla_xu_1=(\omega_3,0,-\omega_1)$ and
$
\nabla_x t=\omega
+
{{(x\cdot\omega)\omega-x}\over{\big((x\cdot\omega)^2+1-|x|^2\big)^{1/2}}}
$
we have for $U_1$ 
\[
U_1=-e^{-t(x,\omega)}(\omega_1x_1+\omega_3x_3)-u_1e^{-t(x,\omega)}
\big((-\omega_1x_3+\omega_3x_1)+
{{(\omega\cdot x)(-\omega_1x_3+x_1\omega_3)}\over{((x\cdot\omega)^2+1-|x|^2)^{1/2}}}\big).
\]
Hence
\be\label{estu1}
|U_1|\leq (\n{x}+\n{x}^2)+\n{x}^2{{|\omega\cdot x|}\over{((x\cdot\omega)^2+1-|x|^2)^{1/2}}}
\ee
where we used that $e^{-t(x,\omega)}\leq 1$, $|x_1\omega_1+x_3\omega_3|\leq \n{x}$ and $|-x_3\omega_1+x_1\omega_3|\leq \n{x}$.

Furthermore, 
\[
&
(\nabla_x u_2)(x,\omega)\\
&=
{1\over{\big((x\cdot\omega)^2+1-|x|^2\big)^{1/2}}}
\big((-x_3\omega_1+x_1\omega_3)\omega+(x\cdot\omega)(\omega_3,0,-\omega_1)\big)\\
&
-(x\cdot\omega)(-x_3\omega_1+x_1\omega_3){{(\omega\cdot x)\omega-x}\over{\big((x\cdot\omega)^2+1-|x|^2\big)^{3/2}}}
\]
and so (recall that $\nabla_x t=\omega
+
{{(x\cdot\omega)\omega-x}\over{\big((x\cdot\omega)^2+1-|x|^2\big)^{1/2}}}$)
\[
&
U_2=e^{-t(x,\omega)}\big(\la (-x_3,0,x_1),\nabla_x u_2\ra-
u_2\la (-x_3,0,x_1),\nabla_x t\ra
\\
&
=e^{-t(x,\omega)} (-x_3\omega_1+x_1\omega_3)^2{1\over{\big((x\cdot\omega)^2+1-|x|^2\big)^{1/2}}}\\
&
-
e^{-t(x,\omega)}(x\cdot\omega)(\omega_3x_3+\omega_1x_1)  {1\over{\big((x\cdot\omega)^2+1-|x|^2\big)^{1/2}}}\\
&
-
e^{-t(x,\omega)}(x\cdot\omega)(-x_3\omega_1+x_1\omega_3){{(\omega\cdot x)(-x_3\omega_1+x_1\omega_3)}\over{\big((x\cdot\omega)^2+1-|x|^2\big)^{3/2}}}\\
&
-
e^{-t(x,\omega)}{1\over{\big((x\cdot\omega)^2+1-|x|^2\big)^{1/2}}} (\omega\cdot x)(-x_3\omega_1+x_1\omega_3)^2
\\
&
-
e^{-t(x,\omega)}
{1\over{\big((x\cdot\omega)^2+1-|x|^2\big)^{1/2}}}
(x\cdot\omega)^2(-x_3\omega_1+x_1\omega_3)^2
{{1}\over{\big((x\cdot\omega)^2+1-|x|^2\big)^{1/2}}}\\
&
=U_{2,1}+U_{2,2}+U_{2,3}+U_{2,4}+U_{2,5}
.
\]
 
By Lemma \ref{conv} we find that(we omit the details) 
\be\label{treg-9}
U_1,\  U_{2,j}\in L^2(G\times S\times I),\ j=1,2,3,4,5.
\ee
Hence $F_1^2\psi\in L^2(G\times S\times I)$.  

The third order derivative $F_1^3\psi$ contains terms like
\[
&
e^{-t(x,\omega)}(x\cdot\omega)(-x_3\omega_1+x_1\omega_3)^3{{1}\over{\big((x\cdot\omega)^2+1-|x|^2\big)^{3/2}}},\\
&
e^{-t(x,\omega)}(x\cdot\omega)^3(-x_3\omega_1+x_1\omega_3)^3{{1}\over{\big((x\cdot\omega)^2+1-|x|^2\big)^{5/2}}}
\]
and we conjecture that $F_1^3\psi\not\in L^2(G\times S\times I)$.

\end{example}

The above Example \ref{exreg-a} suggests that the tangential regularity might not be valid  for some classes of transport problems considered in this paper. 
Actually, we see that
the techniques (for tangential regularity of problems with constant multiplicity) by \cite{rauch85}  do not seem to work since after the above localization (section \ref{breg}), for example, the problem  
\be\label{treg-12}
\omega\cdot\nabla_x\psi+\Sigma\psi-K_{r}\psi=f,\
\psi_{|\Gamma_-}= g,\ \psi(.,.,E_m)=0
\ee
becomes
\be\label{treg-13}
\tilde b(x,\omega)\cdot\nabla_z\phi+\tilde\Sigma\phi-\widetilde K_{r}\phi=\tilde f,\ 
\phi_{|\widetilde\Gamma_-}=\tilde g,\ \phi(.,.,E_m)=0.
\ee
The equation (\ref{treg-13}) is equivalent to
\be\label{treg-14}
\tilde b_3(z,\omega){\p \phi{z_3}}
=
-\tilde b_1(z,\omega){\p \phi{z_1}}-\tilde b_2(x,\omega){\p \phi{z_2}}
-\tilde\Sigma\phi+\widetilde K_{r}\phi+\tilde f
\ee
Here (we decompose $z=(z',z_3)$)
\[
\tilde b_3(z,\omega)=\tilde b_3(z',0,\omega)+z_3G(z',z_3)
\]
where $\tilde b_3(z',0,\omega)=(\omega\cdot\nabla_xh_3)(h^{-1}(z',0))$ is not (necessarily) non-zero in a neighbourhood of $0\in\{z\in\R^3|\ z_3=0\}$. Hence the crucial representation \cite[(19)]{rauch85} is not necessarily valid.

\subsection{Notes on higher-order regularity}\label{result-tanreg} 

For some problems with variable multiplicity higher order (tangential) regularity can be shown when the data is appropriately restricted.
One such result can be found in \cite{nishitani96} which we bring up shortly below.

We need the following spaces. Using the notations of \cite{nishitani96} let $D_\infty(G\times S,\Gamma_-')$ be the subspace of $C^\infty(\ol G\times S)$ such that
$
{\rm supp}\ (u)\cap (\Gamma_-'\cup \Gamma_0')=\emptyset .
$
Moreover, let $X_k(G\times S,\Gamma_-')$ be the completion of $D_\infty(G\times S,\Gamma_-')$ 
with respect to $H^{(k,0)}(G\times S)$-norm where the space $H^{(k,0)}(G\times S)$
is similarly defined as the spaces  $H^{(m_1,m_2,m_3)}(G\times S\times I^\circ)$ in section \ref{an-spaces}.

Consider the following homogeneous  inflow boundary values transport problem 
\be\label{nish-0}
T\psi=
a{\p \psi{E}}+\omega\cdot\nabla_x\psi+\Sigma\psi= f,\
\psi_{|\Gamma_-}=0,\ \psi(.,.,E_m)=0.
\ee
For simplicity, we assume that  $G$ is of the form
\[
G=\{x\in U|\ r(x)>0\}
\]
where 
\bea\label{nol}
&
\ol G\subset U\subset\circ \R^3,\ r\in C^1(U),\ \partial G=\{x\in U|\ r(x)=0\}, \
\nabla_xr(x)\not=0,\ x\in \partial G,\nonumber\\
&
\omega^*\ ({\rm Hess}\ r(x))\ \omega\ {\rm is\ negative} \ {\rm for }\ (x,\omega)\in \Gamma_0. 
\eea
Then $\nu=-{{\nabla_xr(x)}\over{|\nabla_xr(x)|}}$ and we assume that $-\nabla_xr$ is pointing outwards on $\partial G$.
Using again the notations by \cite{nishitani96} we take 
\[
&
A_j(x,\omega)=\omega_j,\ \nu(x,\omega)=\nu(x)=-{{\nabla_xr(x)}\over{|\nabla_xr(x)|}},\ 
A_b(x,\omega):=\sum_{j=1}^3A_j(x,\omega)\nu_j(x,\omega)=(\omega\cdot \nu),
\\
&
\Omega:=G\times S,\ r(x,\omega):=r(x),\ 
h(x,\omega)=h_{\pm}(x,\omega):=(\omega\cdot\nabla_xr)(x),\\
&
M(x,\omega):=\begin{cases}\R &{\rm on}\ \Gamma_+\cup\Gamma_0\\ 0,\ &{\rm on}\ \Gamma_-  \end{cases}.
\]
We see that
\[
A_r(x,\omega):=\sum_{j=1}^3 (\partial_{x_j}r)A_j(x,\omega)
=\nabla_xr(x)\cdot\omega
=-h(x,\omega)A(x,\omega)
\]
where 
\[
A(x,\omega):=-1.
\]
Furthermore,
\[
&
A_h(x,\omega):=\sum_{j=1}^3 (\partial_{x_j}h)A_j(x,\omega)=\omega\cdot\nabla_xh
=\omega\cdot\nabla_x(\nabla_xr(x)\cdot\omega))\\
&
=\sum_{k=1}^3\omega_k\partial_k(\omega\cdot\nabla_xr)
=\sum_{k=1}^3\sum_{j=1}^3\omega_k\omega_j\partial_k\partial_jr
=\omega^*\ ({\rm Hess}\ r(x))\ \omega.
\]
and so we find by (\ref{nol}) that $A(x,\omega)$ and $A_h(x,\omega)$ are simultaneously negative  on $\Gamma_0$ which is a basic assumption of \cite{nishitani96}.

These methods require further study but we conjecture  the following regularity result:

\vskip5mm
 
Assume that
\be\label{trunc-14-re}
-{\p a{E}}(x,E)\geq q_1>0,\ {\rm a.e.},
\ee
\be\label{trunc-20-re}
-a(x,E_0)\geq q_3>0,\ -a(x,E_m)\geq q_3>0,  \ {\rm a.e.}
\ee
\be\label{assfor-a-re}
|a(x,E)|\geq q_4>0\ {\rm a.e.}.
\ee
Let
$k\in\N_0$ and assume that
\be\label{nish-1}
a\in W^{\infty,(k,1)}(G\times I),\ \Sigma\in W^{\infty,(k,0,1)}(G\times S\times I^\circ).
\ee
Let
\be\label{nish-3}
f\in C(I,X_{k+1}(G\times S,\Gamma_-')).
\ee
Then the solution of the problem (\ref{nish-0}) belongs to $C^1(I,X_k(G\times S,\Gamma_-')$ (see \cite[Proposition 5.2, Theorem 5.5]{nishitani96}). In the case where the restricted collision operator $K_r$ is included in the transport equation (\ref{nish-0}) the application of \cite{nishitani96} is not clear since in general ${\rm supp}(K_r\psi)\not\subset {\rm supp}(\psi)$.

It is possible to choose the function $h$ and $h_{\pm}$ in a more subtle way.
Note that the  above result  is not in contradiction with Example \ref{exreg-a}  since  we assume that 
$f\in C(I,X_{k+1}(G\times S,\Gamma_-'))$ which is not valid for data $f=1$.

\begin{example}
For the unit ball $G=B(0,1)\subset\R^3$ we may choose $r(x)=1-|x|^2$. Then 
$\nu(x)=x$, $h(x,\omega)=-2(\omega\cdot x)$, $\omega^*\ ({\rm Hess}\ r(x))\ \omega=A_h(x,\omega)=-2|\omega|^2=-2$. Hence the above assumptions for $r$ and $h=h_{\pm}$ are valid.
\end{example}

\begin{remark}\label{re-tan1}

In certain \emph{weighted co-normal Sobolev spaces}  higher order (tangential) regularity can be achieved with less restrictive data. 
Here weighted co-normal Sobolev spaces are spaces of the form (see \cite{nishitani98})
\[
X^k_{s,r}(G\times S\times I^\circ;\Gamma):=\cap_{j=0}^k\theta_+^{s+k-j}\theta_-^{r+k-j}H^j(G\times S\times I^\circ;\Gamma)
\]
equipped with the natural inner product
\[
\la\psi,v\ra_{X^k_{s,r}(G\times S\times I;\Gamma})=\sum_{j=0}^k\la\theta_+^{-s-k+j}\theta_-^{-r-k+j}\psi,\theta_+^{-s-k+j}\theta_-^{-r-k+j}v\ra_{H^j(G\times S\times I^\circ;\Gamma)}
\]
where $k\in \N_0$ and $s,\ r\in\R$ and where $\theta_+,\ \theta_-$ are appropriate weight functions. 
$H^j(G\times S\times I^\circ;\Gamma)$ are so called co-normal Sobolev spaces with respect to $\Gamma$.
The weight functions are emerging from the  representations of $G$ and $\Gamma_{\pm}$ (that is, they emerge from $r$ and $h_{\pm}$ when $G=
\{x\in U|\ r(x)>0\}, \Gamma_{\pm}=\{(y,\omega)\in \Gamma|\ h_{\pm}(y,\omega)>0\}$). 
Using these techniques we conjecture that for certain transport problems the solutions $\psi$
can be shown  (with some $s,\ r>C(k)>0$) to be in  $X^k_{-s,r}(G\times S\times I^\circ;\Gamma)$
(cf. \cite[Theorem 2.4]{nishitani98}) which means that
\[
\theta_+^{s-k+j}\theta_-^{-r-k+j}\psi\in H^j(G\times S\times I^\circ;\Gamma),\
j=0,...,k.
\]
The factor $\theta_+^{s-k+j}\theta_-^{-r-k+j}$ "eliminates singularities".

\end{remark}

\begin{remark}\label{sha-lopa}
In the theory of PDEs one knows Lopatinski-Shapiro theory and related more general \emph{Kreiss-Lopatinski theory} within which the existence, stability and regularity of solutions can be analysed as well
(see e.g. \cite[Chapters 12, 13]{wloka} for elliptic operators and for symmetric hyperbolic systems  e.g. \cite{metivier}, \cite{metivier05}).  
Conventionally sufficient algebraic criteria for  Kreiss-Lopatinski conditions have been formulated by using the symbols of PDE operators but there exist also certain apriori estimates which guarantee these conditions.
After exponential shift $\phi=e^{CE}\psi$ 
the estimates like (\ref{es-1}), (\ref{es-1-a}) may be useful in the study
of uniform Kreiss-Lopatinski conditions 
(e.g. \cite[Chapter 2]{metivier}).
We omit here all formulations.
\end{remark}

\section{Remarks on transport operators from microlocal perspective}\label{wf}

\emph{Transport operator interpreted as a pseudo-differential operator}.
Consider the  equation 
\bea\label{wf-1}
T\psi:=-a(x,E){\p \psi{E}}
&
+c(x,E)\Delta_S\psi
+d(x,\omega,E)\cdot\nabla_S\psi
\nonumber\\
&
+\omega\cdot\nabla_x\psi+\Sigma(x,\omega,E)\psi-K_{r}\psi=f
\eea 
where we assume (for simplicity) that the restricted collision operator $K_r$ is of the form
\bea\label{wf-3}
(K_r\psi)(x,\omega,E)
&
=
\int_{S'}\int_{I'}\sigma^1(x,\omega',\omega,E',E)\psi(x,\omega',E') dE' d\omega'
\nonumber\\
&
+
\int_{S'}\sigma^2(x,\omega',\omega,E)\psi(x,\omega',E)  d\omega'
=:(K_r^1\psi)(x,\omega,E)+(K_r^2\psi)(x,\omega,E).
\eea
Denote more shortly $U:=G\times S\times I^\circ,\ z:=(x,\omega,E)\in U$. Let $\zeta:=(\xi,\eta,\tau)$ be the  cotangent variable on $T_z^*(U)$ .
Assuming that the cross-sections
$\sigma^1,\ \sigma^2$ obey certain criteria the partial integral operator $K_r$ can be expressed as a sum $K_r=K_r(z,D)+R$ where $K_r(z,D)$ is a pseudo-differential operator 
whose symbol, say $k_r(z,\zeta)$, belongs to the \emph{classical symbol class} $S_{1,0}^0(U)$ and $R$ is a smoothing operator (cf. \cite[Chapter 7]{hsiao}). In the sequel the operator $R$ is meaningless  and so we omit it.
Hence assuming that the coefficients are smooth  the operator $T$ is a pseudo-differential operator $T(z,D)$ on $U$ whose symbol 
\[
t_0(z,\zeta)={\rm i}\ a(x,E)\tau-c(x,E)\sum_{i,j}^2g^{ij}\eta_i\eta_j 
-{\rm i}\ d(z)\cdot\eta
-{\rm i}\ (\omega\cdot\xi)+\Sigma(z)-k_r(z,\zeta)
\]
belongs to the class $S_{1,0}^2(U)$ where 
$g^{ij}$ are the elements of the inverse matrix
$ 
\big(\la {{\partial}\over{\partial \omega_i}}_{|\omega},{{\partial}\over{\partial \omega_j}}_{|\omega}\ra\big)^{-1}.
$
The principal symbol  of 
$T(z,D)$ is 
\[
t_0(z,\zeta)=
-c(x,E)\sum_{i,j}^2g^{ij}\eta_i\eta_j 
\]
or 
\[
t_0(z,\zeta)=
{\rm i}\ a(x,E)\tau
-{\rm i}\ d(z)\cdot\eta
-{\rm i}\ (\omega\cdot\xi)
\]
depending on whether $c\equiv 0$  or not.
We remark that  the principal symbol $t_0(z,\zeta)$  is well-defined  on the cotangent bundle 
$T^*(U)$ (that is, it is defined in the coordinate free way). (Above $\xi_j=(\xi\cdot dx)(\partial _{x_j}),\ \eta_j=(\eta\cdot d\omega)(\partial_{ \omega_j}),\ \tau=(\tau\cdot dE)(\partial_E)$ for $\zeta\cdot dz=\xi\cdot dx+\eta\cdot d\omega+\tau dE\in T^*(U)$).
Denote the differential part by $P(z,D\psi:=-a{\p \psi{E}}
+c\Delta_S\psi
+d\cdot\nabla_S\psi
+\omega\cdot\nabla_x\psi+\Sigma\psi$. Then $T(z,D)=P(z,D-K_{r}(z,D)$. We find that $P(z,D)$ is  properly supported (as a partial differential operator). Moreover, $K_r(z.D)$ is properly supported if $K_r^1=0$.

\emph{Wavefront set in the interior}.
The transport problems have a limited regularity of solutions (in spite of  regularity of data and domains). There are some general methods along which one is  able to study the formation and location of singularities. One of them is the microlocal study of distributional solutions. The basic concepts of microlocal analysis are the wavefront sets and propagation of singularities along bicharacteristic curves. We shortly recall some basics of them.

To start with we define the wavefront set in the case where the domain, say $V$, is an open subset of $\R^N$. Let $(x_0,\xi_0)\in T^*(V)\sim V\times\R^N,\ \xi_0\not=0$. 
An open neighbourhood $\s C_{\xi_0}\subset T_{x_0}^*(V)$ of $\xi_0$  is called a \emph{conical neighbourhood}  if $\xi\in  \s C_{\xi_0}$ implies
that $t\xi \in  \s C_{\xi_0}$ for all $t>0$.
The  $C^\infty$ \emph{wavefront set $WF(u)$} of a distribution $u\in D'(V)$ is the closed  subset of $T^*(V)$ defined (via its complement) as follows (e.g. \cite{hansen10},  
\cite{payne95} and \cite{egorov} (where a lot of results  regarding to microlocal analysis and its consequences are exposed mainly without proofs):

Let $(x_0,\xi_0)\in T^*(V),\ \xi_0\not=0$. Then $(x_0,\xi_0)$ is not in $WF(u)$
if and only if there exists a function $\varphi\in C_0^\infty(V)$ for which  $\varphi(x_0)\not=0$ and a conical neighbourhood $\s C_{\xi_0}$ of $\xi_0$, such that for any $N>0$ there exists $C_N>0$ such that
\[
|\mc F(\varphi u)(\xi)|\leq C_N(1+|\xi|)^{-N}, \xi\in  \s C_{\xi_0}
\] 
where $\s F$ denotes the Fourier transform.

One of the equivalent definitions of wavefront sets is:

$(x_0,\xi_0)\in T^*(V),\ \xi_0\not=0$ is not in $WF(u)$ if and only if there exists 
a symbol $a(x,\xi)\in S_{1,0}^m(V)$ such that $a_0(x,\xi)\not=0$ and $a(x,D)(\varphi u)\in C^\infty(V)$ where 
$a_0(x,\xi)$ is the principal symbol of $a(x,\xi)$ and $\varphi$ is as in the above definition.

A  distribution  
on a manifold (like on $U$) can be defined   in  various (coordinate free) ways.
The definition of the wavefront set $WF(\psi)\subset T^*(U)$ in the case where the domain is the manifold
$U=G\times S\times I^\circ$ can be reverted to the above definition by using pullbacks along coordinate charts. The other possibility is  to employ the equivalent definition noted above after  the definition of $WF(u)\subset T^*(V)$. 
We omit exact definitions.

It can be shown that
\be\label{singsupp}
\pi({WF}(\psi))={\rm sing\ supp}(\psi)
\ee
where $\pi $ is the natural projection $\pi:T^*(U)\to U$ and ${\rm sing\ supp}(\psi)$ is the singular support of $\psi$ (that is, the complement of the open set where  $\psi$ is  $C^\infty$).

The \emph{characteristic set (or variety)} of $T:=T(z,D)$ is  
\[
{\rm char}(T):=\{(z,\zeta\cdot dz)\in T^*(U)|\ t_0(z,\zeta)=0\}.
\]
In the case where $T(z,D)$ is {properly supported}  $T(z,D)\psi$ is defined for any $\psi\in D'(U)$ (recall that $T(z,D)$ is properly  supported if e.g.  $K_r^1=0$). 
Assuming that $T(z,D)$ is properly supported or $\psi\in \s E'(U)$ (here $\s E'(U)$ is the space of distributions with compact support) the basic result is 
\be\label{wf-5-a}
WF(T\psi)\subset WF(\psi)\subset WF(f)\cup {\rm char}(T)
\ee
where $f:=T\psi$.
The inclusions (\ref{wf-5-a}) tell us  especially that possible wavefront points (that is, microlocal singularities) of the distributional solutions $\psi$ of the equation $T\psi=f$ lie on ${\rm char}(T)$ when $f$ is smooth.

The \emph{bicharacteristic curve} of $T(z,D)$ emanating from $(z_0,\zeta_0)$  is the solution $\alpha=(z,\zeta):\Delta\to {\rm char}(T)$ of the Hamiltonian  system of differential equations which in local coordinates is (here $z'={{dz}\over{ds}},\ \zeta'={{d\zeta}\over{ds}}$)
\be\label{wf-5}
\begin{cases}&z'=\nabla_\zeta t_0(z,\zeta)\\
&\zeta'=-\nabla_z t_0(z,\zeta),\\
&(z(0),\zeta(0))=(z_0,\zeta_0)\end{cases}.
\ee
The solutions curves $\alpha(s)=(z(s),\zeta(s)),\ s\in \Delta$ of (\ref{wf-5}) give the Hamiltonian flows that is, integral curves of the Hamiltonian vector field
$H_{t_0}:=\nabla_\zeta t_0\cdot {{\partial}\over{\partial z}}-\nabla_z t_0\cdot {{\partial}\over{\partial \zeta}}$ with $t_0$. 
The projection curves $\tilde\alpha=\pi\circ\alpha:\Delta\to U$ are called \emph{characteristic curves}.

The operator $T(z,D)$ is of \emph{principal type} if its principal symbol $t_0(z,\zeta)$ is homogeneous in $\zeta$ and $\nabla_\zeta(z,\zeta)\not=0,\ \zeta\not=0$ on ${\rm char}(T)$.

Suppose that $T(z,D)$ is properly supported,   principal type and that $t_0(z,\zeta)$ is real valued. Then 
the bicharacteristic curve preserves microlocal singularities in the following sense:

Suppose
that $\psi\in D'(U)$ is a distributional solution of
$
T(z,D)\psi=f.
$
Furthermore, assume that $(z_0,\zeta_0)\in WF(\psi)\setminus WF(f)$. Then the whole bicharacteristic curve $\alpha(s)=(z(s),\zeta(s)), \ s\in\Delta$ through  $(z_0,\zeta_0)$ is included in 
$WF(\psi)\setminus WF(f)$ (that is, possible the singularity $(z_0,\zeta_0)$ propagates along the bicharacteristic curve when $f$ is smooth).
This result holds without the assumption '$T(z,D)$ is properly supported' if $\psi$ has a compact support in $U$.

The above definition gives the notion of $C^\infty$ wave front set. 
There exists also other type of wavefront sets such as \emph{Sobolev wave front sets}.
Relevant function spaces in our case  would be the anisotropic Sobolev spaces
$H^{(s_1,s_2,s_3)}(G\times S\times I^\circ)$ (given in  section \ref{an-spaces} above).
Basic results for Sobolev wavefront sets have counterparts to the above formulations.

\emph{Wavefront set up to the boundary}.
Consider the initial  boundary value problem
\be\label{wf-2}
T(z,D)\psi=f,\ {B\psi}_{|\partial U}=g,\
\psi(.,.,E_{\rm m})=0
\ee
where $B$ is a partial differential boundary  operator. 
We assume that \emph{$\partial U$ is non-characteristic for $T(z,D)$}  that is, 
$t_0(z,n(z))\not=0$ for $z\in (\partial U)_r$ where $n(z)$ is the unit normal vector on the regular part $(\partial U)_r$ of $\partial U$. 
When the  boundary values are included the micro-localization must be studied on the so called \emph{(compressed) $b$-cotangent bundle} (e.g. 
\cite{melrose78},
\cite[sections 1 and 2]{vazy08}, \cite{payne95}). 
Omitting all details we only mention that the $b$-cotangent bundle $^bT^*(\ol U)$ is the dual bundle of the bundle, say $^bT(\ol U)$, formed by the tangential vector fields.

A distribution $\psi\in D'(U)$ is \emph{extendable} if it can be extended to
a distribution $\tilde\psi\in D'(\R^3\times S\times \R)$ and if $\tilde\psi_{|\partial U}\in D'(\partial U)$ is well-defined.
The relevant microlocal concept to analyse boundary value problems 
is the so called $C^{\infty}$ \emph{(compressed) b-wavefront set} $^bWF(\psi)$  of an extendable distribution $\psi\in D'(U)$. We omit its definition but remark that  
\[
^bWF(\psi)\subset  (T^*(U)\setminus\{0\})\cup (T^*(\partial U)\setminus\{0\}),
\ T^*(\ol U)\subset\ ^bT^*(\ol U).
\]
Let $\iota$ be the natural imbedding $T^*(\ol U)\to\ ^bT^*(\ol U)$. Suppose that $T(z,D)$ is properly supported. 
The basic result is that 
\[
^bWF(\psi)\subset\ ^bWF(f)\cup WF(g)\cup\widetilde {\rm char}(T) 
\]
where 
$
\widetilde {\rm char}(T):=\iota({\rm char}(T))   
$
and $f=T\psi$.

Suppose that $T$ is properly supported,  principal type, $t_0(z,\zeta)$  is real valued
and that data is smooth. Then wavefront singularities propagate along the so called \emph{broken bicharacteristic curve}.
The broken bicharacteristic curve $\alpha:\Delta\to \widetilde {\rm char}(T)$ obeys a kind of broken Hamiltonian system.
In the interior $U$ the requirement for the broken bicharacteristic curve reduces to (\ref{wf-5}). When the bicharacteristic curve encounters the boundary $\partial U$ it may reflect  or retract back to $U$ or it may glide on the boundary.
For example, the geometric nature of reflection and gliding are different and so one needs the concept of a broken bicharacteristic curve.

The above
standard assumptions concerning for microlocal analysis are not necessarily valid for all transport problems. For example, 

A. The boundary $\Gamma=(\partial G)\times S\times I^\circ$ (and hence either $\partial U$) is not generally non-characteristic for the transport operator $T(z,D)$. This implies (among others) that   $\iota({\rm char}(T))\not\subset\ ^bT^*(U)$
where $\iota$ is the imbedding given above. 
That is why the $b$-cotangent bundles $^bT^*(U)$
are not necessarily sufficient structures to analyse inflow boundary value problems. For example, $\Gamma$ is not non-characteristic for
$T(z,D)=
\omega\cdot\nabla_x+\Sigma-K_{r}$ since $t_0(z,\nu(x))=-{\rm i}\ \omega\cdot\nu(x)=0$ when $ \omega\perp \nu(x)$. Because ${\p {t_0}{\xi_j}}={\rm i}\omega_j$ and $\n{\omega}=1$ we see that $T(z,D)$ is principal type. $t_0(z,\zeta)$  is not real-valued.

B. 
The transport operators are not necessarily  principal type. This implies that the principal part operator, say $T_0(z,D)$, does not dominate lower order pseudo-differential operators. In addition, the Hamiltonian flow may  trap in a compact set $A\subset U$. 
For example,
the principal part of the operator $T(z,D)=-a{{\partial}\over{\partial E}}
+c\Delta_S
+d\cdot\nabla_S
+\omega\cdot\nabla_x+\Sigma-K_{r}$ is $t_0(z,\zeta)=-
c\sum_{i,j}^2g^{ij}\eta_i\eta_j $. So, $t_0(z,\zeta)$ is real-valued. Furthermore, ${\p {t_0}{\xi_j}}={\p {t_0}{\tau}}=0, \ j=1,2,3$ and $({\p {t_0}{\eta_1}},({\p {t_0}{\eta_2}})=-c(2g^{11}\eta_1+g^{21}\eta_2+g^{12}\eta_2,2g^{22}\eta_2+g^{12}\eta_1+g^{21}\eta_1)$. Hence $T(z,D)$ is not  principal type since ${\rm char}(T)=\{(\xi,\eta,\tau)|\ \eta=0\}$ and so ${\p {t_0}{\xi_j}}={\p {t_0}{\tau}}={\p {t_0}{\eta_j}}=0$ on 
${\rm char}(T)$.
There exists variant conditions instead of  principal type property. For instance, they assume that the differential $dt_0=(\nabla_zt_0)\cdot dz+(\nabla_\zeta t_0)\cdot d\zeta$ and the fundamental 1-form $\zeta\cdot dz=\xi\cdot dx+\eta\cdot d\omega+\tau dE$ are linearly independent on ${\rm char}(T)$.  
They are not necessarily   valid  for transport problems as well.

The assumption that $t_0(z,\zeta)$ must be  real-valued  can  be in some cases avoided   by considering $-{\rm i} T(z,D)$ instead of $T(z,D)$.
For example, the principal symbol of 
$
T(z,D)\psi= -{{\partial}\over{\partial E}}+
\omega\cdot\nabla_x+\Sigma$ is $t_0(z,\zeta)={\rm i}\tau-{\rm i}(\omega\cdot\xi)$. The principal symbol of $-{\rm i} T(z,D)$ is $t_0(z,\zeta)=-\tau+\omega\cdot\xi$ which is real-valued.
By computing a bicharacteristic curve  of the operator $-{\rm i}T(z,D)$ 
and its projection along $U$
we see that the corresponding characteristic curve is  
$
\tilde\alpha(s)=(x_0-\omega_0 s,\omega_0,E_0+s),
$
as wellknown.

Finally we remark that emerging and propagation of singularities can be studied by using different methods. For example, in \cite{kim} one has considered (without microlocal methods) the \emph{propagation of discontinuities} along the characteristic curves in non-convex domains. Moreover, one shows therein that the discontinuities emanate at the non-convex part of the (grazing) boundary $\Gamma$.

\section{Discussion}\label{discus}

In the global case $G=\R^3$ the regularity of  solutions  of initial value transport problems may increase according to data (see \cite{tervo19-b}). In the case where $G$ is not the whole space the relevant transport problems contain the inflow boundary condition. The corresponding initial boundary value problem is characteristic. It is well-known from the general viewpoint of the first order PDE-systems that solutions of these kind of initial boundary value problems may posses only limited regularity. In the case of initial inflow boundary value transport problems the limit of regularity with respect to $x$ (and $\omega$-variables) seems to be $3/2$ (see Example \ref{exreg} above). In the case where the transport equation does not contain the slowing down term $a{\p \psi{E}}$ the regularity with respect to $E$-variable is not necessarily limited (see section \ref{rev} above).

The regularity results exposed in this paper  may be largely generalized for fractional indexes. The relevant spaces within which the results can be formulated are then fractional anisotropic  Sobolev-Slobodevskij spaces $H^{(s_1,s_2,s_3)}(G\times S\times I^\circ)$ (cf. Remark \ref{frac} above).
Under relevant assumptions the solution of the problem
(\ref{i1}), (\ref{i2}), (\ref{i1-i})  
lies in $H^{(s_1,s_2,s_3)}(G\times S\times I^\circ)$ for certain   indexes $s_j>1$. The restricted collision operator $K_r$ may be more general than that of (\ref{sc-13}) and the assumption (\ref{sc-16-a}) is superfluous.
The analysis requires  further study and techniques but we think that the scales $H^{(s_1,s_2,s_3)}(G\times S\times I^\circ)$ are relevant for more general formulations.

In the section \ref{meta} the decomposition of the solution  as $\psi=u+L_-g,\ u\in D(\widetilde P_{C,0})$ is not necessary for the use of Neumann series in regularity analysis. For instance, consider the situation given in section \ref{csda-ex} and assume that the coefficient $a$ is a positive constant. After exponential shift the problem therein is (here $f_C$ and $g_C$ are as in section \ref{meta})
\[
{\mathbb P}_C\phi:=
-a{\p {\phi}{E}}+\omega\cdot\nabla_x\phi +(\Sigma+Ca)\phi=f_C,\ \phi_{|\Gamma_-}=g_C,\ \phi(.,.,E_m)=0.
\]
In virtue of (\ref{ts-1}), (\ref{ts-2}) the inverse ${\mathbb P}_C^{-1}$ (for $C$ large enough) is 
\bea\label{dis1}
&
{\mathbb P}_C^{-1}(f_C,g_C)=
\int_0^{\min\{\eta(E),t(x,\omega)\}}
e^{-\int_0^{s} (Ca+\Sigma)(x-\tau\omega,\omega) d\tau}
 f_C(x-s\omega,\omega,E+as) ds\nonumber\\
&
+
H(\eta(E)-t(x,\omega))e^{-\int_0^{t(x,\omega)}(Ca+\Sigma)(x-s\omega,\omega)ds}
{g_C}(x-t(x,\omega)\omega,\omega,E+at(x,\omega))
\eea
where as above $\eta(E)={{E_m-E}\over a}$.
Working instead of the space $H_{-,0}^{(1,0,0)}(G\times S\times I^\circ)$ in  more appropriate spaces (which we do not conjecture here)
one is able to retrieve regularity results  by using  similar techniques as above in section \ref{csda-special}. 
Applying this kind of layout the   condition (\ref{sc-16-a}) becomes unnecessary. 
We also notice that the knowledge of explicit formulas is not essential in Neumann series approach. The key is the a priori estimate like (\ref{sc-5-a}) for which one can retrieve sufficient criteria without knowing the solution explicitly. The Lebesgue index $p$ can also be chosen more generally $p\in [1,\infty]$. The case $p=2$ is, however most important as to approximation and numerical analysis.
The considerations require further research.

As we  mentioned above  only a limited the Sobolev regularity of solutions can  be achieved for transport problems. Nevertheless, in appropriately weighted  Sobolev spaces the regularity of arbitrary order is possible. 
One of them (for tangential regularity) we depicted in Remark \ref{re-tan1}. Additionally,  we mention 
\cite[Theorem 2.2]{takayama02} which gives, among others sufficient criteria under which
for any $k\in \N_0$  the solution of the mono-kinetic problem
\be\label{mono-k6-a}
\omega\cdot\nabla_x\psi+(\Sigma+\lambda)\psi  ={ f}(x,\omega),
\
{\psi}_{|\Gamma_-}={ 0}.
\ee
belongs to 
$m^{-k}\eta_+^{-s}\eta_-^{\tau}H^k(G, L^2(S))$ for $s,\ \tau$ large  enough.
Here $m$ and $\eta$ are appropriate weight functions.
The property $\psi\in m^{-k}\eta_+^{-s}\eta_-^{\tau}H^k(G, L^2(S))$ implies the interior Sobolev regularity of solutions since it can be seen that $m^{k}\eta_+^{s}\eta_-^{-\tau}>0$ in any compact subset of $G\times S$.
The  reference contains  results which can be applied to  $E$-dependent problems as well.

\vskip10mm
\noindent{\bf Acknowledgement}. The author thanks Dr. Petri Kokkonen for useful discussions and improvements while preparing the paper.


\begin{thebibliography}{5}\setlength{\itemsep}{-0.5mm}




\bibitem{agoshkov}  Agoshkov, V. \emph{Boundary Value Problems for Transport Equations}, Springer Science +Business Media, 1998 

\bibitem{alexandre} Alexandre, R., Y. Morimoto, S. Ukai, C.-J. Xu and T. Yang. \emph{Global existence and full regularity of the Boltzmann equation without angular cutoff}. Comm. in Math. Phys. 304, Article number 304, 2011 


\bibitem{alonso}  Alonso R. and W. Sun. \emph{The radiative transfer equation in the forward-peaked regime}, arXiv:1411.0163v1 [math.AP], 2014

\bibitem{aubin}
Aubin, J.-P. \emph{Applied functional analysis}, John Wiley and Sons, 1979
  

\bibitem{bouchut}  Bouchut F. \emph{Hypoelliptic regularity in kinetic equations}, J. Math. Pures Appl. 81, pp. 1135-1159, 2002


\bibitem{case63} Case, K.M. and P.F. Zweifel. \emph{Existence and uniqueness theorems for neutron transport equation}. J. Math. Phys. 4 (11), pp. 1376-1385, 1963
 

\bibitem{cessenat84} Cessenat, M. \emph{Th\'eor\'emes de trace $L^p$ pour des espaces de fonctions de la neutronique}, C.R. Acad. Sc. Paris, t. 299, Serie 1, number 16, 1984

\bibitem{chen20} Chen, J-Kun, Ping-Han Chuang, Chun-Hsiung Hsia and Jhe-Kuan Su. \emph{A revisit of the velocity averaging lemma: On the regularity of stationary Boltzmann equation in a bounded convex domain}. arXiv:2011.00180v1 [math.AP] 31 Oct 2020

\bibitem{chen}  Chen Y. and L. He. \emph{ Smoothing estimates for Boltzmann equation with full-range interactions. Spatially inhomogeneous case},
Arch. Rational Mech. Anal. 203, pp. 343-377, 2012 

\bibitem{choulli} Choulli M. and P. Stefanov.  \emph{An inverse boundary value problem for stationary transport equation}. Osaka J. Math. 36 (1), 1998, pp. 87-104


\bibitem{chupin} Chupin, L. \emph{Fokker-Planck equation in bounded domain}.
Annales de L'Institut Fourier 60 (1), pp. 217-255, 2010 

 

\bibitem{dautraylionsv6}
Dautray, R., Lions, J-L. \emph{Mathematical Analysis and Numerical Methods for Science and Technology}, Vol. 6. Evolution Problems II, Springer, 1999.

 

\bibitem{degond} Degond, P.  \emph{Global existence of smooth solutions for the Vlasov-Fokker-Planck equation in $1$ and $2$ space dimensions}. Ann. scient. Ec. Norm. Sup. $4^e$ serie  19 (4), pp. 519-542, 1986


 

\bibitem{egger14}  Egger, H. and  Schlottbom, M.. \emph{An $L^p$-theory for stationary radiative transfer}. Appl. Anal. 93 (6), pp. 1283-1296, 2014

\bibitem{egorov}
Egorov, Yu.V and M.A. Shubin (Eds.). \emph{Partial Differential Equations IV: Microlocal Analysis and Hyperbolic Equations}. Encyclopaedia of Mathematical Sciences, Springer, 1993

 

\bibitem{frank10} Frank, M., M. Herty and A.N. Sandjo. \emph{ Optimal  radiotherapy 
treatment planning governed by kinetic equations}, Math. Mod. Meth. Appl. Sci. 20 (4), 2010, pp. 661-678

\bibitem{frank-goudon} Frank, M., Goudon, T. \emph{On a generalized Boltzmann equation for non-classical particle transport}, Kinet. Relat. Models 3 (2010) 395-407.


\bibitem{friedrich58} Friedrichs, K. O., \emph{Symmetric positive linear differential equations}, Comm. Pure Appl. Math. 11, pp. 333-418, 1958. 

\bibitem{golse} F. Golse, P-L. Lions, B. Perthame and R. Sentis. \emph{Regularity of moments of the solution of a transport equation}, J. Funct. Anal. 76. pp. 110-125, 1988

 
\bibitem{grisvard} Grisvard, P. \emph{Elliptic problems in nonsmooth domains}, vol. 24 of
Monographs and Studies (Mathematics), Pitman, 1985

\bibitem{guo16} Guo, Y., C. Kim, D. Tonon and A. Trescases. \emph{BV-regularity of the Boltzmann equation in non-convex domains}, Archive for Rational Mechanics and Analysis 220 (3), 2016


\bibitem{guo17} Guo, Y., C. Kim, D. Tonon and A. Trescases. \emph{Regularity of the Boltzmann equation in convex domains}, Inventiones Mathematicae 207 (1), pp. 115-290, 2017
 
\bibitem{halmos}  Halmos P.R. and Sunder,V.S. . \emph{Bounded Integral Operators on $L^2$ Spaces}, Springer 1978

\bibitem{hansen10}
Hansen S., J. Hilgert and W. Paravicini. \emph{Microlocal Analysis}. Lecture notes, 2010 

\bibitem{herty} Herty, M., J\"orres, C. and Sandjo, A.N.  \emph{Optimization of a model Fokker-Planck equation}. Kinetic and Related Models 5 (3), pp.485-503, 2012 

 
 
\bibitem{hormander85} H\"ormander, L. \emph{The Analysis of Linear Partial Differential Operators III}, Springer, 1985 


\bibitem{hsiao} Hsiao, G.C. and Wendland, W.L. \emph{Boundary Integral Equations}, Springer, 2008.


\bibitem{jorres} J\"orres, C. \emph{Numerical Methods for Boltzmann Transport Equations in Radiotherapy Treatment Planning}. Dissertation, RWTH Aachen University, 2015

\bibitem{kawagoe} Kawagoe, D. \emph{Regularity of solutions to the stationary transport equation with the incoming boundary data}. Thesis in Kyoto University, 2018


\bibitem{kim}
Kim, C. \emph{Formation and propagation of discontinuity for Boltzmann equation in non-convex domains}. arXiv: 1007.1997v1 [math.AP] 12 Jul 2010 


\bibitem{lax} Lax, P.D. and Phillips, R.S.. \emph{Local boundary conditions for dissipative symmetric linear differential operators}, Comm. Pure Appl. Math. 13, pp. 427-455, 1960 

\bibitem{le-bris} Le Bris, C. and Lions, P.-L. . \emph{Existence and uniqueness of solutions to Fokker-Planck type equations with irregular coefficients}. Comm. Partial Diff. Equations 33, pp. 1272-1317, 2008

\bibitem{melrose78}
Melrose, R.B. and J. Sj\"ostrand. \emph{Singularities of boundary value problems I}. Comm. Pure Appl. Math., Vol. 31, pp. 593- 617, 1978 


\bibitem{metivier} Metivier, G. \emph{Small viscosity and boundary layer methods, theory, stability analysis and applications}, 
Modelling and Simulation in Science, Engineering and Technology, Birkh\"auser, 2003

\bibitem{metivier05} Metivier, G. and K. Zumbrun. \emph{Hyperbolic boundary value problems for symmetric systems with variable multiplicities}, J. Diff. Equations 211, pp. 61-134, 2005

\bibitem{mokhtarkharroubi91}  Mokhtar-Kharroubi, M. \emph{$W^{p,1}$ regularity in transport theory}, Math. Models. Methods. Appl. Sci. 1 (4), pp. 477-499, 1991

\bibitem{morando} Morando, A., P. Secchi and P. Trebeschi. \emph{Characteristic initial boundary value problems for symmetrizable  systems}. Rend. Sem. Mat. Univ. Torino Vol. 67, pp. 229-245, 2009 

\bibitem{morando09} Morando, A., P. Secchi and P. Trebeschi. \emph{Regularity of solutions to characteristic initial-boundary value problems for symmetrizable systems}. J. of Hyperbolic Differential Equations, 6 (4), pp. 753-808, 2009

 

\bibitem{nishitani96} Nishitani, T. and M. Takayama. \emph{A characteristic initial boundary value problem for a symmetric positive system}. Hokkaido Math. J. 25, pp. 167-182, 1996

\bibitem{nishitani98} Nishitani, T. and M. Takayama. \emph{Characteristic initial boundary value problems for symmetric hyperbolic systems}. Osaka J. Math. 35, pp. 629-657, 1998

 
 

\bibitem{pazy83} Pazy, A. \emph{Semigroups of Linear Operators and Applications to Partial Differential Equations}, Springer, 1983.
 
\bibitem{payne95}
Payne, K.R. \emph{Interior regularity of the Dirichlet problem for the Tricomi equation}. J. Math. Anal. Appl. 199, pp. 271-292, 1995   


\bibitem{rauch85} Rauch, J. \emph{Symmetric positive systems with boundary characteristic of constant multiplicity}. Trans. Amer. Math. Soc. 291 (1), pp. 167-187, 1985 

\bibitem{rauch74} Rauch, J. and F. Massey. \emph{Differentiability of solutions to hyperbolic initial-boundary value problems}. Trans. Amer. Math. Soc. 189, pp. 303-318, 1974

\bibitem{rauch12} Rauch, J. \emph{Hyperbolic Partial Differential Equations and Geometric Optics}, 2012


\bibitem{takayama02} Takayama, M. \emph{Regularity of solutions to non-uniformly characteristic boundary value problems for symmetric systems}. Reprint in Kyoto University (on-line), 2002 


\bibitem{tervo17-up} Tervo, J. and Kokkonen, P. \emph{On existence of $L^1$-solutions for coupled Boltzmann transport equation and  radiation therapy treatment planning}, ArXiv: 1406.3228v2 $[$ math.OC $]$ 1 Feb 2017 

\bibitem{tervo18-up} Tervo, J.,  Kokkonen, P.,  Frank, M. and Herty, M.  \emph{On Existence of $L^2$-solutions of Coupled Boltzmann Continuous Slowing Down Transport Equation System}, arXiv:1603.05534v3 $[$ math.OC $]$ 5 Apr 2018

\bibitem{tervo17} Tervo, J.,  Kokkonen, P. , Frank, M. and Herty, M. \emph{On existence of solutions for Boltzmann Continuous Slowing Down transport Equation}, J. Math. Anal. Appl. 460 :1, 271-301, 2018

 
\bibitem{tervo18} Tervo, J.,  Kokkonen, P. , Frank, M. and Herty, M.  \emph{On approximative linear Boltzmann transport equation for charged particle transport}. Math. Models Methods  Appl. Sci. 14:28, pp. 2905-2939, 2018.

\bibitem{tervoarxiv-18} Tervo, J. \emph{
On linear hypersingular Boltzmann transport equation and its variational formulation}. arXiv:1808.09631v3 $[$ math.AP $]$, 2019.

\bibitem{tervo19} Tervo, J. and M- Herty. \emph{On approximation of a hyper-singular transport operator and existence of solutions}.  Methods and Applications in Analysis 27 (1). pp. 125-152, 2020

\bibitem{tervo19-b} Tervo, J.  \emph{ On global existence and regularity of solutions for a transport problem related to charged particles}. J. Computational and Theoretical Transport, DOI: 10.1080/23324309.2020.1851722 (on-line), 2021 


\bibitem{tian} Tian, R. \emph{ Existence, uniqueness and regularity property of solutions to Fokker-Planck  type equations}. J. Math. Anal. Appl.  2 (1), pp. 53-63, 2014


\bibitem{zeghal} Zeghal, A. \emph{Sobolev regularity in neutron transport theory}, ISRH Math. Anal., pp. 1-22, 2012

\bibitem{ziemer} Ziemer, W.P. \emph{Weakly Differentiable Functions}. Sobolev Spaces and Functions of Bounded Variation. Springer, 1989

\bibitem{vazy08} 
Vazy, A. \emph{Propagation of singularities for the wave equation on manifolds with corners}. Annals of Math. 168, pp. 749-812, 2008


\bibitem{wloka} Wloka, J. \emph{Partial Differential Equations}. Cambridge University Press, 1987

\end{thebibliography}
\end{document}